\theoremstyle{plain}
\newtheorem{thm}{Theorem}[section]
\newtheorem{prop}[thm]{Proposition}
\newtheorem{lem}[thm]{Lemma}
\newtheorem{cor}[thm]{Corollary}
\theoremstyle{definition}
\newtheorem{defn}{Definition}
\theoremstyle{remark}
\newtheorem{remark}{Remark}
\newtheorem{example}{Example}
\def\cA{{\cal A}}
\def\tbeta{\tilde \beta}
\def\cB{{\mathcal B}}
\def\cc{{\curvearrowright}}
\def\bE{{\mathbb E}}
\def\teta{{\tilde \eta}}
\def\cF{{\mathcal F}}
\def\cG{{\mathcal G}}
\def\bh{{\bar h}}
\def\sK{\mathbb K}
\def\sL{{\mathbb L}}
\def\tlambda{{\tilde \lambda}}
\def\N{{\mathbb N}}
\def\cP{{\mathcal P}}
\def\sym{{\textrm{Sym}}}
\def\hsigma{{\hat \sigma}}
\def\ttheta{\tilde \theta}
\def\chix{{\raise.5ex\hbox{$\chi$}}}
\def\bX{{\overline X}}
\def\bY{{\overline Y}}
\def\Z{{\mathbb Z}}
\def\bZ{{\overline Z}}
\begin{document}
\title{Sofic entropy and amenable groups }
\author{Lewis Bowen\footnote{supported in part by NSF grant DMS-0968762 and NSF CAREER Award DMS-0954606.} \\ Texas A\&M University \\ {\it In memory of Dan Rudolph}}



\begin{abstract}
In previous work, the author introduced a measure-conjugacy invariant for sofic group actions called sofic entropy. Here it is proven that the sofic entropy of an amenable group action equals its classical entropy. The proof uses a new measure-conjugacy invariant called upper-sofic entropy and a theorem of Rudolph and Weiss for the entropy of orbit-equivalent actions relative to the orbit change $\sigma$-algebra.
\end{abstract}

\maketitle
\noindent
{\bf Keywords}: sofic groups, amenability, entropy\\
{\bf MSC}:37A35\\

\noindent
\tableofcontents

\section{Introduction}
The paper [Bo10a] introduced a family of measure-conjugacy invariants referred to as sofic entropy for actions of sofic groups. This entropy is inspired by the classical Kolmogorov-Sinai entropy and shares many of its features. The main goal of this paper is to show that the sofic entropy of an amenable group action equals its classical entropy. An alternative approach based on operator algebras is being developed by Kerr and Li [KL1, KL2]. The reader is encouraged to review [Bo10a] for more background.



\subsection{Sofic groups} 

To begin, let us recall the definition of a sofic group. 

\begin{defn}[Sofic groups]\label{defn:sofic}
Let $G$ be a countable group. For any integer $m_i>0$, let $[m_i]=\{1,\ldots, m_i\}$ and $\sym(m_i)$ denote the symmetric group on $[m_i]$. Let $\Sigma=\{\sigma_i\}_{i=1}^\infty$ be a sequence of maps $\sigma_i:G \to \sym(m_i)$ which are not assumed to homomorphisms. Then $\Sigma$ is a {\em sofic approximation} to $G$ if for every $g,h \in G$,
$$\lim_{i\to\infty} \frac{1}{m_i}\#\{p \in [m_i]:~\sigma(g)\sigma(h)p = \sigma(gh)p\} = 1$$
and for every $g\ne h \in G$,
$$\lim_{i\to\infty} \frac{1}{m_i}\#\{p \in [m_i]:~\sigma(g)p \ne \sigma(h)p\} = 1.$$
To avoid trivialities, we also assume $\lim_{i\to\infty} m_i = +\infty$, which is necessarily true if $G$ is infinite. $G$ is {\em sofic} if there exists a sofic approximation to $G$. 
\end{defn}

\begin{example}
If $G$ is residually finite then there exists a decreasing sequence $\{N_i\}_{i=1}^\infty$ of finite-index normal subgroups of $G$ with  $\cap_i N_i = \{e\}$. Let $\sigma_i: G \to \sym(G/N_i)$ be the canonical homomorphism given by the action of $G$ on $G/N_i$. Then $\{\sigma_i\}_{i=1}^\infty$ is a sofic approximation to $G$.
\end{example}

\begin{example}
If $G$ is amenable then there exists an increasing sequence $\{F_i\}_{i=1}^\infty$ of finite subsets of $G$ such that $\bigcup_i F_i = G$ and for every finite $K \subset G$
$$\lim_{i\to\infty} \frac{|KF_i \Delta F_i|}{|F_i|} = 1.$$
Let $\sigma_i:G \to \sym(F_i)$ be any map such that if $f\in F_i$, $g\in G$ and $gf \in F_i$ then $\sigma_i(g)f=gf$. Then $\{\sigma_i\}_{i=1}^\infty$ is a sofic approximation to $G$.
\end{example}

Sofic groups were defined implicitly by Gromov in [Gr99] and explicitly by Weiss in [We00]. Since finitely generated linear groups (i.e., subgroups of $GL_n(F)$ where $F$ is a field) are residually finite (by [Ma40]) they are sofic. It is easy to check that a countable group is sofic if and only if all of its finitely generated subgroups are sofic. Thus all countable linear groups are sofic. It is unknown whether every countable group is sofic but an unresolved case is that of the universal Burnside group on a finite set of generators. Pestov has written a beautiful up-to-date survey [Pe08] on sofic groups and their siblings, hyperlinear groups.

\subsection{Sofic entropy}\label{sec:soficentropy} 

Let $G$ be a countable discrete group. In this paper, an {\em action} of $G$ is a triple $(T,X,\mu)$ where $(X,\mu)$ is a standard probability space and $T=(T_g)_{g\in G}$ is a collection of measure preserving transformations $T_g:X \to X$ such that $T_gT_h=T_{gh}$ for all $g,h \in G$. The notation $G \cc^T (X,\mu)$ means $(T,X,\mu)$ is an action. Also $G \cc (X,\mu)$ means that $G$ acts on $(X,\mu)$ by measure-preserving transformations and the product of $g\in G$ with $x\in X$ is denoted $gx$.

 A {\em process} over $G$ if a quadruple $\bX:=(T,X,\mu,\phi)$ where $(T,X,\mu)$ is a $G$-action and $\phi:X \to A$ is a measurable map into a finite or countable set $A$. $\phi$ is called an {\em observable} and $A$ is the {\em range} of the process. We will implicitly assume that the range of every process considered in this paper is finite. The next few paragraphs define the entropy rate of $\phi$ with respect to a sofic approximation $\Sigma$ for $G$ in the special case in which $A$ is finite.

Suppose that $\sigma:G \to \sym(m)$ is a map and $\psi:\{1,\ldots,m\} \to A$ is a function. In order to compare $\psi$ with $\phi$, let $W \subset G$ be finite ($W$ is for {\em window}). Recall that $A^W$ is the set of all functions from $W$ to $A$. Let $\phi^W:X \to A^W$ be the map defined $\phi^W(x)(w):=\phi(T_wx)$. Similarly, define $\psi^W_\sigma:\{1,\ldots,m\} \to A^W$ by $\psi_\sigma^W(p)(w) = \psi(\sigma(w)p)$. The measure $\mu$ pushes forward to a measure $\phi^W_*\mu$ on $A^W$. Similarly, if $u$ is the uniform probability measure on $\{1,\ldots,m\}$, then $(\psi_\sigma^W)_*u$ is a measure of $A^W$. Let $d_W((\sigma,\psi),\phi)$ be the total variation distance between $\phi^W_*\mu$ and $(\psi_\sigma^W)_*u$. Explicitly,
$$d_W((\sigma,\psi),\phi):=\frac{1}{2}||\phi^W_*\mu -(\psi_\sigma^W)_*u||_1= \frac{1}{2}\sum_{a \in A^W} \Big| \phi^W_*\mu\big(\{a\}\big) - \big(\psi^W_\sigma\big)_*u\big(\{a\}\big)\Big|.$$


The {\em sofic entropy rate} of the process $\bX$ with respect to a sofic approximation $\Sigma=\{\sigma_i\}_{i=1}^\infty$ to $G$ (where $\sigma_i:G\to \sym(m_i)$) is defined by:
\begin{eqnarray}\label{eqn:sofic}
h(\Sigma,\bX):=\inf_{W \subset G} \inf_{\epsilon>0} \limsup_{i\to\infty} \frac{\log\#\{\psi:\{1,\ldots,m_i\} \to A:~d_W((\sigma_i,\psi),\phi)<\epsilon\}}{m_i}.
\end{eqnarray}
The first infimum is over all finite subsets of $G$. The entropy $h(\Sigma,\bX)$ may alternatively be denoted by $h(\Sigma,\phi)$ or $h_\mu(\Sigma,\phi)$.

In order to obtain a measure-conjugacy invariant, consider a special class of observables as follows. The map $\phi$ is {\em generating} if the smallest $G$-invariant $\sigma$-algebra on $X$ for which $\phi$ is measurable is the $\sigma$-algebra of all measurable sets up to sets of measure zero. The following is part of the main result of [Bo10a].
\begin{thm}\label{thm:K}
Suppose $G \cc (X,\mu)$. If $\phi_1$ and $\phi_2$ are finite generating observables of $X$ and $\Sigma$ is a sofic approximation to $G$ then $h(\Sigma,\phi_1)=h(\Sigma,\phi_2)$.
\end{thm}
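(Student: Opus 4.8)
The plan is to deduce the theorem from the single assertion that, for any finite observable $\phi$ of $X$ which is measurable with respect to the smallest $G$-invariant $\sigma$-algebra making $\phi_1$ measurable, one has $h(\Sigma,\phi_1)=h(\Sigma,(\phi_1,\phi))$, where $(\phi_1,\phi):X\to A_1\times A$ denotes the joint observable $x\mapsto(\phi_1(x),\phi(x))$. Granting this, the theorem is immediate: since $\phi_1$ is generating, $\phi_2$ is measurable with respect to the $\sigma$-algebra in question, so $h(\Sigma,\phi_1)=h(\Sigma,(\phi_1,\phi_2))$; since $\phi_2$ is also generating, the symmetric statement gives $h(\Sigma,\phi_2)=h(\Sigma,(\phi_1,\phi_2))$; hence $h(\Sigma,\phi_1)=h(\Sigma,\phi_2)$. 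Note that the generating hypothesis is genuinely used here: without it, $(\phi_1,\phi_2)$ can carry strictly more entropy than $\phi_1$.

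The first ingredient is a finite-coding lemma: if $\phi$ is measurable with respect to the smallest $G$-invariant $\sigma$-algebra making $\phi_1$ measurable, then for every $\delta>0$ there is a finite $F\subseteq G$ and a map $\Pi:A_1^F\to A$ with $\mu(\{x:\phi(x)\ne\Pi(\phi_1^F(x))\})<\delta$. This is routine measure theory: the finite $\sigma$-algebras generated by the maps $\phi_1^F$, over finite $F\subseteq G$, increase to the invariant $\sigma$-algebra in question, so by the martingale convergence theorem $\E[1_{\{\phi=a\}}\mid\sigma(\phi_1^F)]\to 1_{\{\phi=a\}}$ in $L^1$ for each $a\in A$, and one lets $\Pi$ assign to each $\phi_1^F$-cylinder the most likely value of $\phi$ on it.

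The substance of the proof is the comparison of the two families of ``good'' models through the code $\Pi$. For the inequality $h(\Sigma,(\phi_1,\phi))\ge h(\Sigma,\phi_1)$, fix a finite window $W$ and tolerance $\epsilon>0$; given a good model $\psi_1:\{1,\dots,m_i\}\to A_1$ for $\phi_1$ with respect to a much larger window $W'\supseteq W\cup FW\cup F$ (with $FW:=\{fw:f\in F,\ w\in W\}$) and a much smaller tolerance, set $\psi:=\Pi\circ(\psi_1)^F_{\sigma_i}$ and $\Psi:=(\psi_1,\psi)$. The assignment $\psi_1\mapsto\Psi$ is injective, so it suffices to check that $\Psi$ is a good model for $(\phi_1,\phi)$ with respect to $(W,\epsilon)$; this holds because $p\mapsto\Psi^W_{\sigma_i}(p)$ is, for all but $o(m_i)$ values of $p$, a fixed function of $(\psi_1)^{W'}_{\sigma_i}(p)$ — namely the function that, in the genuine system, carries $\phi_1^{W'}$ to $(\phi_1^W,(\Pi\circ\phi_1^F)^W)$ — and the law of the latter is within $|W|\delta$ of $(\phi_1,\phi)^W_*\mu$ by the coding lemma. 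For the reverse inequality $h(\Sigma,\phi_1)\ge h(\Sigma,(\phi_1,\phi))$, a good model $\Psi=(\psi_1,\psi)$ for $(\phi_1,\phi)$ projects to a good model $\psi_1$ for $\phi_1$, and one bounds the fibre of this projection: in any sufficiently good model the set $\{p:\psi(p)\ne\Pi((\psi_1)^F_{\sigma_i}(p))\}$ has density $O(\delta)$ — its density is governed by the $(F\cup\{e\})$-marginal of the model, which nearly agrees with that of $\mu$, where the corresponding defect set has measure $<\delta$ — so $\psi$ is determined by $\psi_1$ off a set of density $O(\delta)$, whence the fibre has size at most $\binom{m_i}{O(\delta)m_i}|A|^{O(\delta)m_i}=e^{o(m_i)}$ as $\delta\to0$. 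Taking the infimum over $W,\epsilon$ and then over $\delta$ absorbs all error terms and yields $h(\Sigma,\phi_1)=h(\Sigma,(\phi_1,\phi))$.

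I expect the main obstacle to be the bookkeeping forced by the fact that the maps $\sigma_i$ are not homomorphisms: when the local rule $\Pi$ is applied inside a model and the $W$-statistics are read off, one meets quantities such as $\psi_1(\sigma_i(f)\sigma_i(w)p)$ that must be replaced by $\psi_1(\sigma_i(fw)p)$, and these agree only for a $(1-o(1))$-fraction of $p$ by the first defining property of a sofic approximation. Propagating these $o(1)$ errors through the comparison of empirical distributions — ensuring they can be absorbed into $\epsilon$ and $\delta$ after passing to $\limsup_{i\to\infty}$ — is the technical heart; the remaining steps are soft measure theory and elementary counting.
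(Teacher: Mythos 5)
You should note first that the paper you were given does not actually prove Theorem \ref{thm:K}: it is quoted as ``part of the main result of [Bo10a]'', so there is no in-paper argument to compare against, only the proof in the cited source. Your proposal is correct in outline, and it is essentially the same circle of ideas as Bowen's proof in [Bo10a]: reduce to showing $h(\Sigma,\phi_1)=h(\Sigma,(\phi_1,\phi))$ when $\phi$ is measurable with respect to the $G$-invariant $\sigma$-algebra generated by $\phi_1$, produce good models for the joint observable by applying a local code $\Pi$ to a good model for $\phi_1$ (injectivity gives one inequality), and control the multiplicity of the projection $(\psi_1,\psi)\mapsto\psi_1$ by a Hamming-ball/Stirling count (the other inequality). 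The main difference is one of packaging: [Bo10a] works with exact ``splittings'' $\beta\le\beta'\le\bigvee_{g\in F}g\beta$ and then passes to a general generating observable via a separate continuity estimate in terms of conditional (Rokhlin) entropy, whereas you fold the $\delta$-error of a single approximate code directly into the model counting, paying $H(O(\delta))+O(\delta)\log|A|$ and letting $\delta\to 0$; this is a legitimate streamlining and loses nothing. The bookkeeping you flag is indeed the only delicate point, and it is absorbable: besides $\sigma_i(f)\sigma_i(w)p$ versus $\sigma_i(fw)p$, note also that $\sigma_i(e)$ need not be the identity permutation (the definition of $\psi^W_\sigma$ reads values at $\sigma_i(e)p$, not at $p$), but soficity applied to $g=h=e$ shows $\sigma_i(e)$ fixes all but $o(m_i)$ points, so this error joins the others and is killed by $\limsup_{i\to\infty}$ followed by the infima over $W$, $\epsilon$ and then $\delta$.
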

Because of this result, the {\em entropy} of the action $G \cc^T (X,\mu)$ with respect to $\Sigma$ is defined by $h(\Sigma,T):=h(\Sigma,\phi)$ where $\phi$ is any finite generating observable (if one exists). 

 
In [Bo10a] an alternative but equivalent definition of entropy is given based on partitions instead of observables. Also the entropy rate of an observable with countable range is defined under special conditions. That extension is not needed here. The main result of this paper is:
 \begin{thm}\label{thm:main}
If $G$ is infinite and amenable, $G\cc (X,\mu)$ and $\phi$ is a finite observable then for any sofic approximation $\Sigma$ to $G$, $h(\Sigma,\phi)$ is the classical mean entropy rate of $\phi$.
\end{thm}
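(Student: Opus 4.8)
Throughout, write $H(\nu)=-\sum_a\nu(\{a\})\log\nu(\{a\})$ for Shannon entropy of a probability measure on a finite set, $H(\phi^F):=H(\phi^F_*\mu)$, and $d$ for total variation distance, so that $d_W((\sigma,\psi),\phi)=d(\phi^W_*\mu,(\psi^W_\sigma)_*u)$. Let $h_{\mathrm{KS}}(\phi)$ denote the classical mean entropy rate of $\phi$, i.e.\ $\lim_n|F_n|^{-1}H(\phi^{F_n})$ along a F\o lner sequence $\{F_n\}$ for $G$; since $F\mapsto H(\phi^F)$ is subadditive and invariant under translation, the Ornstein--Weiss lemma shows this limit exists, is independent of $\{F_n\}$, and equals $\inf_F|F|^{-1}H(\phi^F)$ over all finite $F\subset G$, so in particular $0\le h_{\mathrm{KS}}(\phi)\le\log|A|$. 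The plan is to prove $h(\Sigma,\phi)\le h_{\mathrm{KS}}(\phi)$ and $h(\Sigma,\phi)\ge h_{\mathrm{KS}}(\phi)$ separately; Theorem~\ref{thm:K} is then not needed, and follows. The upper bound is soft --- it uses only Shearer's entropy inequality on $[m_i]$ --- while the lower bound requires transplanting the Ornstein--Weiss quasi-tiling machinery from $G$ to the approximating system $(\sigma_i,[m_i])$.

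\emph{Upper bound.} Fix a finite $W\subset G$ and $\epsilon>0$; let $N_i=\#\{\psi:[m_i]\to A:d_W((\sigma_i,\psi),\phi)<\epsilon\}$ and let $\Psi$ be uniform on this set (we may assume $N_i\ge1$, else there is nothing to prove), so $\log N_i=H(\Psi)$. For $p\in[m_i]$ put $S_p=\{\sigma_i(w)p:w\in W\}$. By the injectivity clause of Definition~\ref{defn:sofic} (and near-multiplicativity) there is an exceptional set $B\subseteq[m_i]$ with $|B|=o(m_i)$ outside of which every point is covered exactly $|W|$ times by the family $\{S_p\setminus B:p\in[m_i]\}$; Shearer's entropy inequality (if each coordinate is covered at least $k$ times by a family of index sets, the joint entropy is at most $k^{-1}$ times the sum of the entropies of the corresponding marginals) applied to $\Psi$ restricted to $[m_i]\setminus B$, together with $H(\Psi)\le|B|\log|A|+H(\Psi|_{[m_i]\setminus B})$ and concavity of entropy, yields
$$\frac{1}{m_i}H(\Psi)\ \le\ o(1)+\frac{1}{|W|}H(\bar\nu_i),\qquad\bar\nu_i:=\E_\Psi\big[(\Psi^W_{\sigma_i})_*u\big]\in\mathrm{Prob}(A^W).$$
Since every $\psi$ in the support of $\Psi$ has $d(\phi^W_*\mu,(\psi^W_{\sigma_i})_*u)<\epsilon$, also $d(\phi^W_*\mu,\bar\nu_i)<\epsilon$, hence $H(\bar\nu_i)\le\max\{H(Q):Q\in\mathrm{Prob}(A^W),\ d(Q,\phi^W_*\mu)\le\epsilon\}$. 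Letting $i\to\infty$, then $\epsilon\to0$ (continuity of $H$ on the finite simplex), then taking $\inf_W$ in \eqref{eqn:sofic} gives $h(\Sigma,\phi)\le\inf_W|W|^{-1}H(\phi^W)=h_{\mathrm{KS}}(\phi)$.

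\emph{Lower bound.} Fix finite $W\subset G$, $\epsilon>0$, small $\delta>0$, and assume first that $(T,X,\mu)$ is ergodic. Two ingredients are needed. (a) \emph{A transferred quasi-tiling}: there are finitely many sufficiently invariant finite sets $F_1,\dots,F_k\subset G$ (depending on $\delta$, $W$, $\epsilon$) so that for all large $i$ one can choose centre sets $C_j\subseteq[m_i]$ for which the ``tiles'' $\sigma_i(F_j)c:=\{\sigma_i(f)c:f\in F_j\}$, over all $j$ and $c\in C_j$, are pairwise disjoint, each of full cardinality $|F_j|$, together cover at least $(1-\delta)m_i$ points, and lie among the points at which $\sigma_i$ is ``$F_j$-multiplicative over $W$'' in the sense that $\sigma_i(w)\sigma_i(f)c=\sigma_i(wf)c$ whenever $w\in W$, $f\in F_j$ and $wf\in F_j$. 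Full cardinality and multiplicativity are obtained by discarding the $o(m_i)$ potential centres at which one of the finitely many relevant identities of Definition~\ref{defn:sofic} fails; disjointness and covering are obtained by running the Ornstein--Weiss greedy selection of tile centres inside $[m_i]$, which is legitimate because, for each fixed finite $E\subset G$, the $\sigma_i$-ball $\{\sigma_i(e)p:e\in E\}$ of a $(1-o(1))$-fraction of $p$ behaves like a genuine $E$-translate. (b) \emph{A strongly typical set}: for $F_j$ invariant enough, the Shannon--McMillan--Breiman theorem for amenable group actions (controlling the measure band) together with the mean ergodic theorem along F\o lner sets (controlling the window statistics) shows that
$$T_j:=\big\{a\in A^{F_j}:\ d(\mathrm{emp}_W(a),\phi^W_*\mu)\le\tfrac\epsilon2\ \text{ and }\ \mu(\phi^{F_j}=a)\le e^{-|F_j|(h_{\mathrm{KS}}(\phi)-\delta)}\big\}$$
has $\mu(\phi^{F_j}\in T_j)\ge\tfrac12$, and hence $|T_j|\ge\tfrac12 e^{|F_j|(h_{\mathrm{KS}}(\phi)-\delta)}$; here $\mathrm{emp}_W(a)\in\mathrm{Prob}(A^W)$ is the empirical distribution of the $W$-subwindows of $a$. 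Now build $\psi:[m_i]\to A$ by placing on each tile $\sigma_i(F_j)c$ (identified with $F_j$ via $\sigma_i(f)c\leftrightarrow f$) an arbitrary element of $T_j$, and arbitrary symbols on uncovered points. There are at least $\prod_{j,c}|T_j|\ge e^{(h_{\mathrm{KS}}(\phi)-\eta)m_i}$ such $\psi$, where $\eta=\eta(W,\epsilon,\delta)\to0$ as $\delta\to0$ (taking the $F_j$ large enough); and by the multiplicativity in (a), for each such $\psi$ the measure $(\psi^W_{\sigma_i})_*u$ lies within total variation $\eta'+o(1)$ (with $\eta'\to0$ as $\delta\to0$ and the $F_j$ grow) of a convex combination of the $\mathrm{emp}_W(a)$ with $a\in\bigcup_j T_j$, hence within $\epsilon$ of $\phi^W_*\mu$ once $\delta$ is small and $i$ is large --- so all of these $\psi$ are counted in $N_i$. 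Therefore $\limsup_i m_i^{-1}\log N_i\ge h_{\mathrm{KS}}(\phi)-\eta$ for every $W$ and $\epsilon$; letting $\delta\to0$ gives $h(\Sigma,\phi)\ge h_{\mathrm{KS}}(\phi)$. For non-ergodic $\mu=\int\mu_\omega\,d\omega$ one instead places on the tiles patterns strongly typical for various ergodic components $\mu_\omega$, distributing the components over tiles (by $|F_j|$-weight) according to the ergodic decomposition; then the window statistics match $\int\phi^W_*\mu_\omega\,d\omega=\phi^W_*\mu$ and the count is $\exp\big((1-o(1))m_i\int h_{\mathrm{KS}}(\phi;\mu_\omega)\,d\omega\big)=\exp\big((1-o(1))m_i\,h_{\mathrm{KS}}(\phi)\big)$.

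\emph{The main obstacle.} The crux is the quasi-tiling transfer of ingredient (a): running the Ornstein--Weiss greedy argument inside the merely approximately multiplicative structure $(\sigma_i,[m_i])$ with all the $o(m_i)$ error terms under explicit control is where the real work lies. It is presumably to finesse this for an arbitrary amenable group that the paper proceeds via orbit equivalence instead --- every free p.m.p.\ action of a countable amenable group is orbit equivalent to a $\Z$-action, for which a sofic approximation is, up to an $o(m_i)$-fraction of points, a disjoint union of long cycles that are trivially tiled by intervals; the Rudolph--Weiss theorem then transports the classical entropy computation from the $G$-action to the $\Z$-action through the (zero-entropy) orbit-change $\sigma$-algebra, and the ``upper-sofic entropy'' invariant mediates the passage on the sofic side.
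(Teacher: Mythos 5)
Your route is genuinely different from the paper's. The paper deliberately avoids quasi-tiling (see its acknowledgements): it introduces upper-sofic entropy, reduces sofic approximations of $\Z$ to unions of cycles via the Elek--Szab\'o theorem, and then transports the classical computation through an orbit equivalence with a $\Z$-action using a Rudolph--Weiss relative-entropy theorem, taking a product with a Bernoulli shift to force freeness. You instead argue directly on $(\sigma_i,[m_i])$: Shearer for the upper bound, transferred Ornstein--Weiss tiling plus Shannon--McMillan for the lower bound. This is essentially the Kerr--Li approach \cite{KL2}. Your upper bound is complete and correct as written; the only blemish is the parenthetical claim that the F\o lner limit \emph{equals} $\inf_F|F|^{-1}H(\phi^F)$ ``by Ornstein--Weiss'' --- the infimum rule is a nontrivial theorem in its own right and does not follow from subadditivity plus invariance --- but you never need it, since $\inf_F|F|^{-1}H(\phi^F)\le\lim_n|F_n|^{-1}H(\phi^{F_n})$ trivially, which is the direction your chain of inequalities uses. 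A structural advantage of your route is that freeness of the action plays no role, whereas the paper must manufacture it.

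The one substantive issue is ingredient (a), which you correctly identify as the crux but then assert in a form stronger than what the cited mechanism delivers: the Ornstein--Weiss greedy selection produces $\epsilon$-\emph{disjoint} quasi-tiles, not pairwise disjoint tiles of full shape $F_j$ covering a $(1-\delta)$-fraction of $[m_i]$. Exact disjointness with full shapes is true, but it is precisely the content of the Elek--Szab\'o theorem \cite{ES10} (the paper quotes its Proposition 2.8: every $R(r)$-approximation of an amenable group is $(S,\frac1r)$-close to a disjoint union of copies of F\o lner sets), so either you should invoke that result directly --- which makes your lower bound clean, since the tiles then really are full copies of $F_j$ --- or you must run the argument with $\epsilon$-disjoint quasi-tiles, in which case two steps need repair: the map from pattern choices to $\psi$ is no longer injective (you lose a factor controlled by $|A|^{\epsilon m_i}$, acceptable but it must be said), and the $W$-window at a core point can read symbols stolen by an overlapping tile, so the total-variation estimate acquires an extra $O(|W|\epsilon)$ term that must be budgeted against the target $\epsilon$ in $d_W$. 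Neither repair is deep, but as written the disjointness claim is a gap in the derivation rather than in the statement. Finally, the non-ergodic case is only gestured at: distributing ergodic components over tiles requires discretizing the ergodic decomposition into finitely many bins with uniform invariance and SMB thresholds (an Egorov-type selection), together with affinity of classical entropy over the decomposition; this is routine but is real work, and it is exactly the sort of bookkeeping the paper's orbit-equivalence argument dispatches by integrating over components after Theorem \ref{thm:RW}.
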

The definition of classical mean entropy rate is reviewed in \S \ref{sec:amenable}. By [Ro88], if the classical entropy of an ergodic, essentially free system $G \cc^T (X,\mu)$ is finite (and $G$ is amenable) then there exists a finite generating observable for the action. So the above theorem implies $h(\Sigma,T)$ is the classical entropy of the action in this case.

\subsection{Overview}
\S \ref{sec:random} discusses several variations on the definition of sofic entropy. These arise from allowing randomness in the sofic approximation and also in the approximations to the observable. \S \ref{sec:amenable} provides basic background on amenable groups and classical entropy theory. \S \ref{sec:Z} proves the main Theorem \ref{thm:main} in the special case in which $G=\Z$. This uses the above variations on sofic entropy but is otherwise elementary. \S \ref{sec:relative} discusses relative entropy theory; both the classical case and the sofic case. \S \ref{sec:oe} proves the main results in entropy/orbit-equivalence theory that allow us to conclude Theorem \ref{thm:main}. 

{\bf Acknowledgements}. I'd like to thank Gabor Elek for providing a rough outline of a proof of the main theorem based on quasi-tiling machinery. The proof presented here does not use his outline but it helped get me started. I'd also like to thank David Kerr for encouragement and especially Hanfeng Li for finding many errors in previous versions.

\section{Random sofic approximations, approximate processes and entropy}\label{sec:random} 
It will be helpful to broaden the notion of sofic approximation to allow for `random' sofic approximations, defined next.

\begin{defn}[Random sofic approximation]\label{defn:random}
Let $G$ be a countable group. Let $\{m_i\}_{i=1}^\infty$ be a sequence of natural numbers, $\sym(m_i)^G$ the set of maps from $G$ to $\sym(m_i)$ and $u_{m_i}$ the uniform probability measure on $[m_i]$. Let $\kappa_i$ be a probability measure on $\sym(m_i)^G$. We say that the sequence $\sK=\{\kappa_i\}_{i=1}^\infty$ is a {\em random sofic approximation} to $G$ if for every $g,h \in G$,
$$\lim_{i\to\infty}  \kappa_i\times u_{m_i}\left(\{(\sigma,p) \in \sym(m_i)^G \times [m_i]:~\sigma(g)\sigma(h)p = \sigma(gh)p\}\right) = 1$$
and for every $g\ne h \in G$,
$$\lim_{i\to\infty} \kappa_i\times u_{m_i}\left(\{(\sigma,p) \in \sym(m_i)^G \times [m_i]:~\sigma(g)p \ne \sigma(h)p\}\right) = 1.$$
\end{defn}

\begin{remark}\label{rem:1}
This notion generalizes sofic approximations in the following sense. If $\Sigma=\{\sigma_i\}_{i=1}^\infty$ is a sofic approximation of a group $G$ (where $\sigma_i \in \sym(m_i)^G$) and $\delta_i$ is the probability measure on $\sym(m_i)^G$ supported on $\sigma_i$ then $\{\delta_i\}_{i=1}^\infty$ is a random sofic approximation of $G$. 
\end{remark}


\begin{defn}[Sofic entropy]
Let $\bX:=(T,X,\mu,\phi)$ be a process over a group $G$ with random sofic approximation $\sK=\{\kappa_i\}_{i=1}^\infty$. Suppose $\phi:X \to A$ where $A$ is a finite set. For each $\sigma \in \sym(m_i)^G$, finite $W \subset G$ and $\epsilon>0$, let $N_i(\sigma,W,\epsilon)$ be the number of functions $\psi:[m_i] \to A$ such that $d_W((\sigma,\psi),\phi)<\epsilon$ (this is the notation used in \S \ref{sec:soficentropy}). The {\em sofic entropy} of $\bX$ with respect to $\sK$ is:
$$h(\sK,\bX):=\inf_{W\subset G}\inf_{\epsilon>0} \limsup_{i\to\infty} \frac{ \int \log N_i(\sigma,W,\epsilon)~d\kappa_i(\sigma)}{m_i}.$$
For example, if $\sK'$ is any subsequence of $\sK$ then $h(\sK',\bX) \le h(\sK,\bX)$.
\end{defn}

The definition above generalizes the notion of sofic entropy by introducing randomness into the sofic approximation. It is also possible to introduce randomness into the observables. This leads to a new notion of entropy called upper-sofic entropy (which was introduced implicitly in [Bo10b]). To explain, it is necessary to have a notion of ``approximate process'' which is motivated by the definition of a sofic group.
\begin{defn}[Approximate process]\label{defn:process}
An {\em approximate process} over $G$ is a quadruple $\bX=(T,X,\mu,\phi)$ where $(X,\mu)$ is a standard probability space, $T=(T_g)_{g\in G}$ is a set of measure-preserving Borel maps $T_g:X \to X$ and $\phi:X \to A$ is a Borel map to a finite or countable set $A$ called the {\em range} of the process. An approximate process is a {\em process} if $T$ defines an action: i.e., $T_{gh}=T_gT_h$ for all $g,h \in G$. The word `approximate' is used to suggest that $\bX$ is approximating some process. The definition by itself does not imply this but it is how these objects will be used.
\end{defn}

\begin{defn}[Local statistics and distance between processes]\label{defn:localstatistics}
Given a finite set $W \subset G$ and an approximate process $\bX=(T,X,\mu,\phi)$ define $\phi_T^W:X \to A^W$ by 
$$\phi_T^W(x):=\big[ w \mapsto \phi(T_w x)\big].$$
Let $(\phi_T^W)_*\mu$ be the pushforward measure on $A^W$. This measure is called the {\em $W$-local statistics of $\bX$}.

Given another approximate process $\bY=(S,Y,\nu,\psi)$ with range $A$ we define
$$d_W(\bX,\bY):= \frac{1}{2}\| (\phi_T^W)_*\mu - (\psi_S^W)_*\nu \|_1.$$
$\bX$ and $\bY$ are said to be equivalent if $d_W(\bX,\bY)=0$ for all finite $W \subset G$. Let $\cP(G,A)$ be the set of all equivalence classes of approximate processes over $G$ with range $A$. By abuse of notation, we do not distinguish between an approximate process and its equivalence class. Let $\cP(G,A)$ have the smallest topology such that for every finite $W\subset G$ the function $d_W$ is continuous with respect to the product topology on $\cP(G,A)\times \cP(G,A)$. 
\end{defn}

\begin{defn}[Approximate processes from random sofic approximations]\label{defn:eta}
Let $\sK=\{\kappa_i\}_{i=1}^\infty$ be a random sofic approximation to $G$. Let $\eta_i$ be a probability measure on $\sym(m_i)^G\times A^{[m_i]}$ where $A$ is a finite or countable set. Suppose that the projection of $\eta_i$ to the first factor is $\kappa_i$. For each $g \in G$ define
$$\hsigma_i(g):\sym(m_i)^G \times A^{[m_i]}\times  [m_i] \to \sym(m_i)^G \times A^{[m_i]}\times [m_i], ~~\hsigma_i(g)(\sigma,\psi,p) = (\sigma, \psi,\sigma(g)p).$$
Define 
$$\chi_i:\sym(m_i)^G \times A^{[m_i]}\times  [m_i] \to A,~~ \chi_i(\sigma,\psi,p):= \psi(p).$$
Define
$$\bX_i:=(\hsigma_i, \sym(m_i)^G \times A^{[m_i]} \times [m_i], \eta_i \times u_{m_i}, \chi_i).$$
Then $\bX_i$ is the approximate process {\em constructed from $\eta_i$}. 

The sequence $\{\bX_i\}_{i=1}^\infty$ is {\em adapted to the sofic approximation $\sK=\{\kappa_i\}_{i=1}^\infty$} if it arises from the above construction for some sequence of measures $\{\eta_i\}_{i=1}^\infty$.
\end{defn}
\begin{defn}\label{defn:Shannon}
If $\mu$ is a probability measure on a finite or countable set $X$, then
$$H(\mu) := - \sum_{x\in X} \mu(\{x\})\log(\mu(\{x\})).$$
By convention $0\log(0)=0$.
\end{defn}

\begin{defn}[Upper-sofic entropy]\label{defn:upper}
Let $\eta_i, \kappa_i$, etc. be as in definition \ref{defn:eta} and let $\eta_i = \int \nu_{i,\sigma} ~d\kappa_i(\sigma)$ be the decomposition over $\kappa_i$. So $\nu_{i,\sigma}$ is a probability measure on the set $\{(\sigma, \xi) \in \sym(m_i)^G \times A^{[m_i]}\}$. Then define
$$h(\bX_i) := \frac{1}{m_i} \int H(\nu_{i,\sigma}) ~d\kappa_i(\sigma).$$
This definition depends implicitly on $\eta_i$ (which might not by determined by the equivalence class of $\bX_i$).

The {\em upper-sofic entropy} of a finite-range process $\bX=(T,X,\mu,\phi)$ with respect to $\sK$  is defined by
$$\bh(\sK,\bX):=\sup \limsup_{j\to\infty} h(\bX_{j})$$
where the supremum is over all sequences $\{\bX_j\}_{j=1}^\infty$ of approximate processes adapted to $\sK'$ (where $\sK'$ is a subsequence of $\sK$) such that $\lim_{j\to\infty} \bX_{j} = \bX$. If no such exists then $\bh(\sK,\bX)=-\infty$. The upper-sofic entropy $h(\Sigma,\bX)$ can alternatively be denoted by $\bh(\sK,\phi)$ or $\bh_\mu(\sK,\phi)$ if it is desirable to emphasize the dependence on $\mu$ or $\phi$.

If each $\kappa_i$ is supported on a singleton set $\{\sigma_i\} \subset \sym(m_i)^G$ then let $\Sigma:=\{\sigma_i\}_{i\in\N}$ and define $\bh(\Sigma,\bX):=\bh(\sK,\bX)$.
\end{defn}

Using the methods of [Bo10a], it can be shown that upper-sofic entropy is an invariant: if $\phi, \psi$ are two generating observables with finite range then $\bh(\sK,\phi)=\bh(\sK,\psi)$ (but this is not needed here). Next upper-sofic entropy is related to sofic entropy (which will be referred to as lower-sofic entropy so as to distinguish it).
\begin{defn}[Strong convergence of approximate processes]
As above, let $\{\bX_i\}_{i=1}^\infty$ be a sequence of approximate processes constructed from measures $\{\eta_i\}_{i=1}^\infty$ on $\sym(m_i)^G \times A^{[m_i]}$ as in definition \ref{defn:eta}. Suppose that the limit $\lim_{i\to\infty} \bX_i=\bX$ is a process over $G$. The sequence $\{\bX_i\}_{i=1}^\infty$ {\em converges to $\bX$ strongly} (denoted $\lim_{i\to\infty} \bX_i = \bX$ strongly) if for every finite $W \subset G$ and every $\epsilon>0$
$$\lim_{i\to\infty} \eta_i\Big(\big\{(\sigma,\psi)\in \sym(m_i)^G\times A^{[m_i]}:~d_W((\sigma,\psi),\phi)<\epsilon \big\}\Big) =1.$$
\end{defn}

\begin{prop}
Let $\bX$ be a process over $G$ and let $\sK$ be a random sofic approximation. Then 
$$h(\sK,\bX) = \sup \limsup_{j\to\infty} h(\bX_{j})$$
where the supremum is over all sequences $\{\bX_j\}_{j=1}^\infty$ of approximate processes adapted to $\sK'$ (where $\sK'$ is a subsequence of $\sK$) such that $\lim_{j\to\infty} \bX_{j} = \bX$ strongly.
\end{prop}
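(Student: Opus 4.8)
The plan is to prove the two inequalities separately. For the direction $h(\sK,\bX) \ge \sup \limsup_j h(\bX_j)$, I would take any sequence $\{\bX_j\}$ adapted to a subsequence $\sK'$ with $\bX_j \to \bX$ strongly, arising from measures $\{\eta_j\}$ on $\sym(m_j)^G \times A^{[m_j]}$ with first marginal $\kappa_j$. Fix a finite window $W$ and $\epsilon>0$. For $\sigma$ in the domain, let $\nu_{j,\sigma}$ be the disintegration of $\eta_j$ over $\kappa_j$; since $H(\nu_{j,\sigma})$ is the Shannon entropy of a probability measure on the countable set $\{\psi : [m_j]\to A\}$ (carried by the $\sigma$-slice), I want to compare $\int H(\nu_{j,\sigma})\,d\kappa_j$ with $\int \log N_j(\sigma, W, \epsilon)\,d\kappa_j$. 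The idea is that $H(\nu_{j,\sigma})$ is essentially the log of the number of $\psi$'s the measure $\nu_{j,\sigma}$ effectively sees, and by strong convergence almost all the mass of $\eta_j$ sits on the set of $(\sigma,\psi)$ with $d_W((\sigma,\psi),\phi)<\epsilon$. The clean way to make this precise: write $H(\nu_{j,\sigma}) \le \log|\supp(\nu_{j,\sigma})|$ is false in general (the support could be huge while entropy small), so instead split $\nu_{j,\sigma}$ into its restriction to the ``good'' set $G_{j,W,\epsilon}(\sigma) := \{\psi : d_W((\sigma,\psi),\phi)<\epsilon\}$ and its complement, use concavity/grouping of entropy $H(\nu) \le H(\text{conditional on the partition into good/bad}) + H(\text{the two-point partition weights}) \le \mathbb P_{\nu_{j,\sigma}}(\text{good})\log N_j(\sigma,W,\epsilon) + \mathbb P_{\nu_{j,\sigma}}(\text{bad}) \log(\#\{\text{bad }\psi\}) + \log 2$, and then integrate against $\kappa_j$; strong convergence forces the good-probability to $1$ in $\eta_j$-measure, and one bounds the bad contribution crudely by $|A|^{m_j}$ times a probability going to $0$. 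Taking $\limsup$, then $\inf$ over $W$ and $\epsilon$, yields $\limsup_j h(\bX_j) \le h(\sK,\bX)$; taking the supremum over all such sequences gives one inequality.

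For the reverse direction $h(\sK,\bX) \le \sup\limsup_j h(\bX_j)$ over strongly convergent adapted sequences, I would construct an explicit maximizing sequence. Fix a sequence $W_k \uparrow G$ of finite subsets and $\epsilon_k \downarrow 0$. The sofic entropy $h(\sK,\bX)$ equals $\inf_k \limsup_i \frac{1}{m_i}\int \log N_i(\sigma, W_k, \epsilon_k)\,d\kappa_i(\sigma)$. For each fixed $i$ and each $\sigma$ with $N_i(\sigma,W_k,\epsilon_k)>0$, let $\nu_{i,\sigma}$ be the \emph{uniform} probability measure on the set $\{\psi : d_{W_k}((\sigma,\psi),\phi)<\epsilon_k\}$ (and some default, say a point mass, when this set is empty, on a negligible $\kappa_i$-set in the limit). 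Set $\eta_i := \int \nu_{i,\sigma}\,d\kappa_i(\sigma)$ and let $\bX_i$ be the approximate process constructed from $\eta_i$ via Definition \ref{defn:eta}. Then $h(\bX_i) = \frac{1}{m_i}\int H(\nu_{i,\sigma})\,d\kappa_i = \frac{1}{m_i}\int \log N_i(\sigma,W_k,\epsilon_k)\,d\kappa_i$ (the log of the size of a uniform measure's support). So $\limsup_i h(\bX_i) \ge \limsup_i \frac{1}{m_i}\int \log N_i(\sigma,W_k,\epsilon_k)\,d\kappa_i \ge h(\sK,\bX)$. It remains to check that, after passing to a suitable subsequence $\sK'$ depending on a diagonalization over $k$, the resulting approximate processes converge strongly to $\bX$: by definition of $\nu_{i,\sigma}$ as uniform on the $(W_k,\epsilon_k)$-good set, $\eta_i$ is supported on $\{(\sigma,\psi) : d_{W_k}((\sigma,\psi),\phi)<\epsilon_k\}$, so for any fixed finite $W$ and $\epsilon>0$, once $W_k \supseteq W$ and $\epsilon_k < \epsilon$ along the chosen subsequence, $\eta_i$ gives full mass to $\{d_W((\sigma,\psi),\phi)<\epsilon\}$ — hence strong convergence. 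One also has to confirm $\bX_i \to \bX$ as elements of $\cP(G,A)$ (ordinary convergence of $W$-local statistics), which follows from the same support property plus the fact that total variation distance is bounded, and that $\bX$ is genuinely a process over $G$ (so the limit is in the right space). Finally take the infimum over $k$: the diagonalized sequence realizes $h(\sK,\bX)$ as a $\limsup$ of $h(\bX_j)$ along a strongly convergent adapted sequence, giving the reverse inequality.

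The main obstacle, and the step deserving the most care, is the first inequality — specifically controlling the ``bad'' part of the disintegrations $\nu_{j,\sigma}$. Unlike the uniform-measure construction used in the converse, here $\eta_j$ is an arbitrary measure whose slices can have enormous support with small entropy, or can leak a small but positive amount of mass onto $\psi$'s that are far from $\phi$ in local statistics; one must argue that this leakage contributes negligibly to $\frac{1}{m_j}\int H(\nu_{j,\sigma})\,d\kappa_j$. The grouping inequality $H(\nu) \le \sum_i p_i H(\nu|_{\text{block }i}) + H((p_i)_i)$ together with the trivial bound $H(\nu|_{\text{bad block}}) \le \log(|A|^{m_j}) = m_j \log|A|$ handles this: the bad-block weight is $\eta_j$-averaged to something tending to $0$ by strong convergence, and it is multiplied by $m_j\log|A|$, but \emph{after} dividing by $m_j$ this is $(\text{something} \to 0)\cdot \log|A| \to 0$. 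That cancellation of the $m_j$ is the crux. A secondary subtlety is the diagonalization in the converse: one must choose the subsequence $\sK'$ and the pairing of $i$ with $k=k(i)$ so that simultaneously (a) $\frac{1}{m_i}\int \log N_i(\sigma, W_{k(i)}, \epsilon_{k(i)})\,d\kappa_i$ stays $\ge h(\sK,\bX) - o(1)$ and (b) $W_{k(i)} \uparrow G$, $\epsilon_{k(i)} \downarrow 0$ along the subsequence so strong convergence holds; this is a routine but slightly fiddly interleaving of the $\limsup$'s and the $\inf_k$.
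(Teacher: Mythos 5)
Your proof is correct, and it supplies exactly the argument the paper leaves implicit (the paper's ``proof'' is only the remark that it is an exercise in the definitions): the upper bound via the grouping inequality, with the bad block bounded by $m_j\log|A|$ times an $\eta_j$-probability that strong convergence sends to $0$ so that the factor $m_j$ cancels after normalization, and the lower bound via uniform measures on the $(W_k,\epsilon_k)$-good sets followed by a diagonalization. The only points worth making explicit in a write-up are the ones you already flag: on the subsequence realizing the $\limsup$ the set of $\sigma$ with empty good set must have $\kappa_i$-measure zero whenever $h(\sK,\bX)>-\infty$ (otherwise the integral of $\log N_i$ is $-\infty$), which is what makes your constructed $\eta_i$ converge strongly, and the degenerate case $h(\sK,\bX)=-\infty$ is handled by the convention that the supremum over an empty family is $-\infty$, since $h(\bX_j)\ge 0$ always.
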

\begin{proof}
The proof is an exercise in understanding the definitions.
\end{proof}

\begin{cor}\label{cor:bhinequality}
Let $\bX$ be a process over $G$ and let $\sK$ be a random sofic approximation. Then 
$$h(\sK,\bX) \le \bh(\sK,\bX).$$
\end{cor}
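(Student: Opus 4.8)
The plan is to derive Corollary~\ref{cor:bhinequality} directly from the Proposition by comparing the two suprema. Both $h(\sK,\bX)$ and $\bh(\sK,\bX)$ are defined as $\sup \limsup_{j\to\infty} h(\bX_j)$ over a collection of sequences of approximate processes adapted to a subsequence $\sK'$ of $\sK$ and converging to $\bX$; the only difference is the \emph{mode} of convergence required. In the formula for $\bh(\sK,\bX)$ we merely require $\lim_{j\to\infty}\bX_j = \bX$ in the topology of $\cP(G,A)$ (i.e.\ $d_W(\bX_j,\bX)\to 0$ for every finite $W$), whereas in the characterization of $h(\sK,\bX)$ furnished by the Proposition we require the stronger condition $\lim_{j\to\infty}\bX_j=\bX$ strongly, meaning $\eta_j(\{(\sigma,\psi):d_W((\sigma,\psi),\phi)<\epsilon\})\to 1$ for all finite $W$ and all $\epsilon>0$.

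So the key step is the elementary observation that strong convergence implies ordinary convergence in $\cP(G,A)$. Indeed, if the $W$-measure of the set of $(\sigma,\psi)$ with $d_W((\sigma,\psi),\phi)<\epsilon$ tends to $1$, then the $W$-local statistics $(\chi_j)^W_*(\eta_j\times u_{m_j})$ of $\bX_j$ are a mixture of measures each within total variation $\epsilon$ of $\phi^W_*\mu$ (on the large part) plus an error of total mass $o(1)$ (on the small part), so $d_W(\bX_j,\bX)\to 0$. Hence every sequence appearing in the supremum defining $h(\sK,\bX)$ (via the Proposition) also appears in the supremum defining $\bh(\sK,\bX)$. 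Since the latter is a supremum over a larger family of sequences, and $\limsup_{j\to\infty} h(\bX_j)$ is the same quantity being maximized in both cases, we get $h(\sK,\bX)\le \bh(\sK,\bX)$.

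There is no real obstacle here; the corollary is a formal consequence of the Proposition together with the trivial implication ``strong convergence $\Rightarrow$ convergence in $\cP(G,A)$''. The only point that requires a line of care is confirming that the $W$-local statistics of an approximate process $\bX_j$ constructed from $\eta_j$ are indeed governed by the quantities $d_W((\sigma,\psi),\phi)$ appearing in the definition of strong convergence — that is, unwinding the definitions of $\hsigma_j$ and $\chi_j$ to see that $(\chi_j)^W_*(\eta_j\times u_{m_j}) = \int (\psi^W_\sigma)_*u_{m_j}\, d\eta_j(\sigma,\psi)$, so that $d_W(\bX_j,\bX)\le \int d_W((\sigma,\psi),\phi)\, d\eta_j(\sigma,\psi)$ by convexity of the total variation norm. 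Given this identity the estimate above is immediate.
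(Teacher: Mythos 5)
Your proposal is correct and is essentially the paper's argument: the corollary is a formal consequence of the preceding Proposition, since every adapted sequence converging strongly to $\bX$ also converges to $\bX$ in $\cP(G,A)$, so the supremum defining $h(\sK,\bX)$ is over a subfamily of that defining $\bh(\sK,\bX)$. Note that in the paper's definition of strong convergence the hypothesis $\lim_{i\to\infty}\bX_i=\bX$ is built in, so your (correct) estimate $d_W(\bX_j,\bX)\le \int d_W((\sigma,\psi),\phi)\,d\eta_j(\sigma,\psi)$ is not even needed, though it does show the implication holds without that hypothesis.
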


\section{Amenable groups}\label{sec:amenable} 

\begin{defn}
Let $G$ be a countable group, $F, K \subset G$ finite sets and $\epsilon>0$. Then $F$ is {\bf left-$(K,\epsilon)$-invariant} if 
$$\frac{|KF \cap F|}{|F|} \ge 1-\epsilon$$
where $KF=\{kf \in G~|~k\in K, f\in F\}$.
A {\bf left-F\o lner sequence} of $G$ is a sequence $\{F_n\}_{n=1}^\infty$ of finite subsets of $G$ such that for all finite $K \subset G$ and all $\epsilon>0$ there exists an $N$ such that $n>N$ implies $F_n$ is left-$(K,\epsilon)$-invariant. $G$ is {\bf amenable} if there exists a left-F\o lner sequence for $G$.
\end{defn}


\begin{defn}
Let $G$ be an amenable group with left-F\o lner sequence $\{F_n\}_{n=1}^\infty$. Let $\bX=(T,X,\mu,\phi)$ be a process over $G$ with range $A$. For a finite $W \subset G$, let $\phi^W:X \to A^W$ be the map $\phi^W(x)(w)=\phi(T_wx)$. The {\em classical entropy} of $\bX$ is defined by
$$h(\bX):=\lim_{n\to\infty} \frac{H(\phi^{F_n}_*\mu)}{|F_n|}$$
where $H(\cdot)$ is as in definition \ref{defn:Shannon}. Some alternative notation for the entropy rate are: $h(\bX)=h(\phi)=h_\mu(\phi)=h_\mu(T,\phi)=h(T,\phi)$. The entropy rate does not depend on the choice of F\o lner sequence (e.g., [Ol85]). 
\end{defn}

\section{The case of $\Z$}\label{sec:Z}     
The purpose of this section is to prove:
\begin{prop}\label{prop:Z}
Let $\sK=\{\kappa_i\}_{i=1}^\infty$ be a random sofic approximation of $\Z$. Let $\bX$ be a process over $\Z$ with finite range. Then $\bh(\sK,\bX) = h(\sK,\bX)= h(\bX)$. That is, classical entropy, sofic entropy and upper-sofic entropy agree.
\end{prop}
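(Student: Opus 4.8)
The plan is to prove the chain of inequalities $h(\bX) \le h(\sK,\bX) \le \bh(\sK,\bX) \le h(\bX)$, where the middle inequality is Corollary~\ref{cor:bhinequality}. The outer two inequalities are where the work lies; both exploit the special structure of $\Z$, namely that every sofic approximation $\sigma_i:\Z \to \sym(m_i)$ is, up to a vanishing fraction of points, a disjoint union of long cycles (equivalently, long intervals on which the dynamics looks like the shift), together with the fact that $\Z$ has the finest possible F\o lner sequence $F_n=\{0,1,\dots,n-1\}$.

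First I would prove the lower bound $h(\bX) \ge \bh(\sK,\bX)$. Fix a sequence $\{\bX_j\}$ of approximate processes adapted to a subsequence $\sK'$ of $\sK$ with $\lim_j \bX_j = \bX$, arising from measures $\eta_j$ on $\sym(m_j)^\Z \times A^{[m_j]}$. The quantity $h(\bX_j)=\frac{1}{m_j}\int H(\nu_{j,\sigma})\,d\kappa_j(\sigma)$ must be bounded above by $h(\bX)+o(1)$. The idea is: for $\kappa_j$-a.e.\ $\sigma$, because $\sK$ is a sofic approximation of $\Z$, the permutation $\sigma(1)$ agrees with a union of cycles of length $\to\infty$ on $[m_j]$ outside a set of density $o(1)$; cut $[m_j]$ into intervals $I$ of a fixed large length $n$ following these cycles. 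A configuration $\xi \in A^{[m_j]}$ restricted to such an interval is (via $\psi_\sigma^W$ for $W = F_n$) essentially a point of $A^{F_n}$, and for $d_W$ small with $W=F_n$ the empirical distribution of these $A^{F_n}$-words under $\xi$ is close to $\phi^{F_n}_*\mu$. Using subadditivity of Shannon entropy over the interval decomposition, $H(\nu_{j,\sigma}) \le \frac{m_j}{n} H\big((\text{marginal on one interval})\big) + o(m_j)$, and the marginal is a measure on $A^{F_n}$ whose entropy is at most $H(\phi^{F_n}_*\mu)+o(1)$ by concavity of $H$ plus continuity (the empirical laws concentrate near $\phi^{F_n}_*\mu$); dividing by $m_j$, taking $j\to\infty$ then $n\to\infty$ gives $\limsup_j h(\bX_j) \le \lim_n \frac{H(\phi^{F_n}_*\mu)}{n} = h(\bX)$. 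This argument needs to be carried out carefully because strong convergence is \emph{not} assumed here, only weak convergence $\bX_j \to \bX$; one handles this by noting that $H(\nu_{j,\sigma})$ is an \emph{average} over $\xi$, so the Markov/concentration step is applied in $L^1$ rather than almost surely, which still suffices after another application of concavity.

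Second, the lower bound $h(\sK,\bX) \ge h(\bX)$: one must exhibit, for each finite $W$ and $\epsilon>0$, many functions $\psi:[m_i]\to A$ with $d_W((\sigma,\psi),\phi)<\epsilon$. Again decompose $[m_i]$ into intervals of length $n$ following the cycles of $\sigma(1)$; on each interval independently plant a word from a set $S \subset A^{F_n}$ consisting of words whose empirical $W$-statistics (for $W \subset F_n$ once $n$ is large relative to $W$) are $\epsilon$-close to $\phi^W_*\mu$. By the classical Shannon--McMillan--Breiman / typical-sequence count for the process $\bX$, $|S| \ge \exp\big((h(\bX)-\epsilon')n\big)$, so the number of valid $\psi$ is at least $|S|^{m_i/n} \ge \exp\big((h(\bX)-\epsilon')m_i\big)$ up to the negligible correction from the $o(m_i)$ points where $\sigma$ misbehaves and from boundary effects between intervals; taking logs, dividing by $m_i$, then $\epsilon' \to 0$ and $W \uparrow \Z$, $\epsilon\downarrow 0$ yields $h(\sK,\bX)\ge h(\bX)$. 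A minor point: the integral $\int \log N_i(\sigma,W,\epsilon)\,d\kappa_i$ over the random approximation is handled by the fact that the cycle-decomposition estimate holds for $\kappa_i$-a.e.\ $\sigma$ with uniform control, so the integrand is bounded below on a set of $\kappa_i$-measure $\to 1$, and is trivially $\ge 0$ elsewhere.

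I expect the main obstacle to be the lower bound direction $h(\sK,\bX) \ge h(\bX)$, specifically the bookkeeping needed to show that independently chosen typical words on the interval pieces fit together to give a $\psi$ whose \emph{global} $W$-local statistics $(\psi_\sigma^W)_*u$ are close to $\phi^W_*\mu$ --- the errors come from three sources (the $o(m_i)$ bad points of $\sigma$, the $O(|W|)$-sized boundary of each length-$n$ interval, and the fluctuation of empirical statistics), and all three must be driven below $\epsilon$ by choosing $i$ large and $n$ large in the right order. Once the interval decomposition of sofic approximations of $\Z$ is set up cleanly, the rest is an application of the classical asymptotic equipartition property, so I would state the cycle-decomposition as a preliminary lemma and then run both inequalities through it.
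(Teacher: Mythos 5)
Your skeleton (cut $[m_i]$ along the long cycles of $\sigma(1)$ into length-$n$ orbit intervals, then compare block statistics with $\phi^{F_n}_*\mu$) is close in spirit to what the paper does after it first reduces, via Theorems \ref{thm:Zasymptotic} and \ref{thm:asymptotic2}, to the single cyclic approximation $\sigma_i(1)=(1,2,\ldots,m_i)$; but both of your outer inequalities have a genuine gap as written. For $\bh(\sK,\bX)\le h(\bX)$: subadditivity plus concavity bounds $\frac{1}{m_j}H(\nu_{j,\sigma})$ by $\frac{1}{n}H(\bar\nu_\sigma)+o(1)$, where $\bar\nu_\sigma$ is the average of the laws of the words read at the \emph{starting points} of your chosen intervals. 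Weak convergence $\bX_j\to\bX$ (i.e.\ smallness of $d_{F_n}$) controls only the word-law averaged over \emph{all} $p\in[m_j]$; the starting points are a density-$1/n$ subset determined by your cutting, and the averaged law over them can be any $\rho$ with $\rho\le n\cdot(\text{full average})$ pointwise, so $H(\rho)$ can exceed $H(\phi^{F_n}_*\mu)$ by a definite amount. Thus the step ``the marginal has entropy at most $H(\phi^{F_n}_*\mu)+o(1)$'' does not follow. It can be repaired by averaging your subadditivity bound over the $n$ cyclic phases of the cutting along each long cycle (so that starting points sweep out almost all of $[m_j]$) and then using concavity and continuity of $H$ on the finite simplex; this phase-averaging is exactly the role of the shift-averaged measure $\mu_n=\frac{1}{m_n}\sum_j \tau^j_*\mu'_n$ in the paper's proof, which then concludes by weak* convergence and upper semicontinuity of $\lambda\mapsto h_\lambda(\phi)$ on $M(A^\Z)$ rather than by a hands-on computation.

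For $h(\sK,\bX)\ge h(\bX)$: the proposition assumes no ergodicity, and for non-ergodic $\mu$ the Shannon--McMillan/AEP-typical $n$-words are generic for ergodic components, not for $\mu$, so they generally do not belong to your set $S$ of words whose internal empirical $W$-statistics are close to $\phi^W_*\mu$; the ``classical typical-sequence count'' therefore does not give $|S|\ge \exp((h(\bX)-\epsilon')n)$. The bound is true, but it needs an extra argument (ergodic decomposition plus concatenation of component-typical stretches in the correct proportions, with its own Stirling count), or else a different design altogether. The paper sidesteps genericity: it counts concatenations whose empirical block-type equals a rational, $\sigma_i$-averaged approximation $\nu_i$ of an arbitrary near-optimal approximate-process law, getting rate $H(\nu_i)/m_i$ by Stirling; this yields the stronger fact $h(\Sigma,\phi)\ge\bh(\Sigma,\phi)$, after which $\bh\ge h(\bX)$ is immediate by taking $\nu_i=\phi^{m_i}_*\mu$. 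Finally, your dismissal of the bad $\sigma$'s under a genuinely random $\kappa_i$ (``the integrand is trivially $\ge 0$ elsewhere'') fails when $N_i(\sigma,W,\epsilon)=0$, since then $\log N_i(\sigma,W,\epsilon)=-\infty$; handling arbitrary random approximations directly requires care on exactly this point, and the paper's reduction to the deterministic cyclic approximation is what lets it avoid the issue.
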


In order to prove this, we will reduce to the case when the sofic approximation $\sK$ is particularly simple (given by finite quotients of $\Z$). For this, we need to define what it means for two sofic approximations to be close.
\begin{defn}\label{defn:close}
Let $W \subset G$ be finite and let $\epsilon>0$. Suppose that $\sigma:G\to\sym(m)$ and $\sigma':G \to \sym(m')$ are two maps and there exist subsets $Q \subset [m]$, $Q' \subset [m']$ and a bijection $\beta:Q \to Q'$ such that
\begin{enumerate}
\item $\sigma'(w)\beta(q) = \beta(\sigma(w)q)$ for all $w \in W$ and $q\in Q$ with $\sigma(w)q \in Q$;
\item $\sigma(w)\beta^{-1}(q') = \beta^{-1}(\sigma'(w)q')$ for all $w \in W$ and $q'\in Q'$ with $\sigma'(w)q' \in Q'$;
\item $|Q| \ge (1-\epsilon)m$, $|Q'| \ge (1-\epsilon)m'$.
\end{enumerate}
Then $\sigma$ and $\sigma'$ are said to be {\em $(W,\epsilon)$-close} to each other.  We say two probability measures $\kappa, \kappa'$ on $\sym(m)^G, \sym(m')^G$ respectively are {\em $(W,\epsilon)$-close} if there is a probability measure $\vartheta$ on $\sym(m_i)^G \times \sym(m_i')^G$ with marginals $\kappa$ and $\kappa'$ such that $\vartheta(\cG(W,\epsilon)) \ge 1-\epsilon$ where  $\cG(W,\epsilon)$ is the set of all $(\sigma,\sigma') \in \sym(m)^G \times \sym(m')^G$ that are $(W,\epsilon)$-close to each other. Finally, we say that two random sofic approximations to $G$, $\sK=\{\kappa_i\}_{i=1}^\infty$ and $\sL=\{\lambda_i\}_{i=1}^\infty$ are {\em asymptotic} if for every finite $W\subset G$ and $\epsilon>0$, $\kappa_i$ is $(W,\epsilon)$-close to $\lambda_i$ for all sufficiently large $i$. 
 \end{defn}



The following theorem is a special case of Theorem \ref{thm:asymptotic2r} proven in the next section.
\begin{thm}\label{thm:asymptotic2}
Let $\bX$ be a process over a group $G$ with random sofic approximations $\sK$ and $\sL$. If $\sK$ and $\sL$ are asymptotic then $\bh(\sK,\bX)=\bh(\sL,\bX)$ and $h(\sK,\bX) = h(\sL,\bX)$.
\end{thm}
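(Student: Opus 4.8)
The plan is to prove Theorem~\ref{thm:asymptotic2} by a \emph{transport} (coupling) argument. Being asymptotic means that, for arbitrarily fine $(W,\epsilon)$, there is a coupling $\vartheta_i$ of $\kappa_i$ and $\lambda_i$ supported, up to mass $\epsilon$, on pairs $(\sigma,\sigma')$ that are $(W,\epsilon)$-close via a partial bijection $\beta\colon Q\to Q'$ covering all but an $\epsilon$-fraction of each vertex set. The idea is to push labellings (for lower-sofic entropy) and label-measures (for upper-sofic entropy) back and forth along these $\beta$'s, controlling the resulting change in $W$-local statistics and in counts/entropies. Since asymptoticity is symmetric, it suffices to prove $h(\sK,\bX)\ge h(\sL,\bX)$ and $\bh(\sK,\bX)\le\bh(\sL,\bX)$; the opposite inequalities follow by swapping $\sK$ and $\sL$. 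One observation used throughout: if $\sigma\in\sym(m)^G$ and $\sigma'\in\sym(m')^G$ are $(W,\epsilon)$-close then $|Q|=|Q'|$ forces $(1-\epsilon)\max(m,m')\le\min(m,m')$, so along a coupling realising asymptoticity one has $m_i/m_i'\to 1$.

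The technical core is a transport lemma for a single $(W,\epsilon)$-close pair $\sigma,\sigma'$ with witness $Q,Q',\beta$ (fix a base symbol $a_0\in A$). Given $\psi\colon[m]\to A$ define $\psi'\colon[m']\to A$ by $\psi'(q')=\psi(\beta^{-1}q')$ on $Q'$ and $\psi'=a_0$ elsewhere. Conditions (1)--(2) of Definition~\ref{defn:close} give $\psi'^{V}_{\sigma'}(p')=\psi^{V}_{\sigma}(\beta^{-1}p')$ for every $V\subset W$ and every $p'\in Q'$ with $\sigma'(v)p'\in Q'$ for all $v\in V$; the bad set of such $p'$ has measure $\le C_{|V|}\epsilon$, and $\beta$ carries the uniform measure on $Q$ to that on $Q'$ up to total-variation error $O(\epsilon)+|1-m/m'|$, so $d_V((\sigma',\psi'),\phi)\le d_V((\sigma,\psi),\phi)+C'_{|V|}\epsilon$. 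Moreover $\psi\mapsto\psi'$ depends only on $\psi$ restricted to $Q$, so its fibres have size $\le|A|^{m-|Q|}\le|A|^{\epsilon m}$; and applied to a label distributed according to any measure $\nu$ on $A^{[m]}$, the pushforward measure has Shannon entropy within $\epsilon m\log|A|$ of $H(\nu)$ in either direction (pushforward cannot raise entropy, and forgetting the coordinates off $Q$ costs at most $(m-|Q|)\log|A|$). The same construction works in the reverse direction using (1) in place of (2).

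For the lower-sofic statement, fix a finite $W_0\subset G$ and $\epsilon_0>0$, and for each $\epsilon'>0$ pick (for $i$ large) a coupling $\vartheta_i$ with $\vartheta_i(\cG(W_0,\epsilon'))\ge 1-\epsilon'$; choosing $\epsilon'$ so small that $C'_{|W_0|}\epsilon'<\epsilon_0/2$ and transporting every labelling good for $\sigma'$ with tolerance $\epsilon_0/2$ to one good for $\sigma$ with tolerance $\epsilon_0$ yields $N_i(\sigma,W_0,\epsilon_0)\ge|A|^{-\epsilon' m_i'}\,N_i(\sigma',W_0,\epsilon_0/2)$ for $\vartheta_i$-a.e.\ $(\sigma,\sigma')\in\cG(W_0,\epsilon')$. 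Taking logarithms, normalising by $m_i$, integrating against $\vartheta_i$, bounding the $\le\epsilon'$ bad part trivially by $\log|A|$, using $m_i'/m_i\to1$, and letting $i\to\infty$ then $\epsilon'\to0$ gives $\limsup_i\frac1{m_i}\int\log N_i(\sigma,W_0,\epsilon_0)\,d\kappa_i\ \ge\ \limsup_i\frac1{m_i'}\int\log N_i(\sigma',W_0,\epsilon_0/2)\,d\lambda_i\ \ge\ h(\sL,\bX)$. Taking the infimum over $W_0,\epsilon_0$ gives $h(\sK,\bX)\ge h(\sL,\bX)$.

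For the upper-sofic statement, take any subsequence $\sK'=\{\kappa_{i_j}\}$ of $\sK$ and any sequence $\{\bX_j\}$ of approximate processes adapted to $\sK'$, built from $\eta_j=\int\nu_{j,\sigma}\,d\kappa_{i_j}(\sigma)$, with $\bX_j\to\bX$. Fix an exhaustion $W_1\subset W_2\subset\cdots$ of $G$ and $\epsilon_k\downarrow 0$, and choose $k(j)\to\infty$ slowly enough (possible by asymptoticity since $i_j\to\infty$) that $\kappa_{i_j}$ is $(W_{k(j)},\epsilon_{k(j)})$-close to $\lambda_{i_j}$ for every $j$, with coupling $\vartheta_{i_j}$. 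Build $\eta_j'$ on $\sym(m_{i_j}')^G\times A^{[m_{i_j}']}$ by sampling $(\sigma,\sigma')\sim\vartheta_{i_j}$, then, when $(\sigma,\sigma')\in\cG(W_{k(j)},\epsilon_{k(j)})$, a label $\xi\sim\nu_{j,\sigma}$ and outputting its transport $\xi'$; otherwise output the constant label $a_0$. Its first marginal is $\lambda_{i_j}$, so the induced $\bY_j$ is adapted to the subsequence $\{\lambda_{i_j}\}$ of $\sL$; the transport estimate on local statistics gives $d_V(\bY_j,\bX)\le d_V(\bX_j,\bX)+O(\epsilon_{k(j)})$ for $V\subset W_{k(j)}$, so $\bY_j\to\bX$. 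Decomposing over $\lambda_{i_j}$, the conditional measure $\nu_{i_j,\sigma'}'$ is a mixture (over the $\sigma$ coupled to $\sigma'$) of transported pushforwards of the $\nu_{j,\sigma}$, so by concavity of $H$ and the entropy estimate of the transport lemma, $\int H(\nu_{i_j,\sigma'}')\,d\lambda_{i_j}\ge\int_{\cG}H(\nu_{j,\sigma})\,d\vartheta_{i_j}-\epsilon_{k(j)}m_{i_j}\log|A|\ge m_{i_j}\bigl(h(\bX_j)-2\epsilon_{k(j)}\log|A|\bigr)$, using $\int H(\nu_{j,\sigma})\,d\vartheta_{i_j}=m_{i_j}h(\bX_j)$ and bounding the $\le\epsilon_{k(j)}$ bad $\vartheta$-mass by $m_{i_j}\log|A|$. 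Dividing by $m_{i_j}'$ and using $m_{i_j}/m_{i_j}'\to1$ and boundedness of $h(\bX_j)$ gives $\limsup_j h(\bY_j)\ge\limsup_j h(\bX_j)$, whence $\bh(\sL,\bX)\ge\bh(\sK,\bX)$.

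\textbf{Main obstacle.} The substance of the proof is uniform bookkeeping of the two error sources — the $\le\epsilon$ mismatch between $[m]$ and $[m']$ recorded by the partial bijection $\beta$, and the $\le\epsilon$ bad mass of the coupling $\vartheta_i$ — and, above all, the fact that under $\vartheta_i$ a single $\sigma'$ may be coupled to many different $\sigma$. This is precisely why the upper-sofic argument must use concavity of $H$: we only ever need the lower bound $h(\bY_j)\gtrsim h(\bX_j)$, which survives mixing, whereas an upper bound would not. A secondary point needing care is the diagonal choice of windows $W_{k(j)}$ and tolerances $\epsilon_{k(j)}$ so that the transported sequence genuinely converges to $\bX$ in $\cP(G,A)$ while the entropy deficit still vanishes, together with recording $m_i/m_i'\to1$, which is what makes the two normalisations compatible.
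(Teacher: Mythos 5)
Your upper-sofic half is essentially correct, and it is in spirit a streamlined version of what the paper does (the paper proves the statement as the special case $\beta=\mathrm{const}$ of Theorem \ref{thm:asymptotic2r}: it too transports the measures $\eta_j$ along the closeness couplings — after a preliminary conjugation lemma that aligns the partial bijections, then a restriction to $[m'']$ and an extension — and it too uses concavity of $H$ together with $H(R_*\zeta)\ge H(\zeta)-(m-m'')\log|A|$ and a diagonalization in $(W,\epsilon)$; your direct ``push along $\beta$ and fill with $a_0$'' map gives the same estimates for the absolute statement).

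The lower-sofic half, however, has a genuine gap at the integration step. On the bad set of the coupling (of $\vartheta_i$-mass $\le\epsilon'$) no transport is available, and your ``trivial bound by $\log|A|$'' is an \emph{upper} bound on $\frac1m\log N_i$, which only helps on the $\lambda_i$ side; on the $\kappa_i$ side you need a \emph{lower} bound on $\log N_i(\sigma,W_0,\epsilon_0)$ over the bad $\sigma$'s, and there is none: $N_i(\sigma,W_0,\epsilon_0)$ can be $0$, so $\int\log N_i\,d\kappa_i$ can be $-\infty$ regardless of how large the good-set contribution is. This is not a removable technicality of your bookkeeping: with the counting formula read literally, the conclusion itself can fail. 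Take $G=\Z$, $\bX$ the i.i.d.\ fair coin process, $\lambda_i=\delta_{\sigma_i}$ with $\sigma_i(1)$ the $m_i$-cycle, and $\kappa_i=(1-\tfrac1i)\delta_{\sigma_i}+\tfrac1i\delta_{\tau_i}$ with $\tau_i$ the trivial map sending every group element to the identity permutation; then $\sK=\{\kappa_i\}$ and $\sL=\{\lambda_i\}$ are asymptotic random sofic approximations, but $N_i(\tau_i,W,\epsilon)=0$ for $W=\{0,1\}$ and $\epsilon\le\tfrac12$, so $\int\log N_i(\sigma,W,\epsilon)\,d\kappa_i=-\infty$ for every $i$, while the corresponding quantity for $\sL$ is $\approx m_i\log 2$. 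The theorem therefore has to be proved with $h(\sK,\bX)$ interpreted through the Proposition in Section 2, i.e.\ as $\sup\limsup_j h(\bX_j)$ over adapted sequences converging to $\bX$ \emph{strongly}; a vanishing fraction of bad $\sigma$'s is then harmless. That is exactly the paper's route: its transport lemmas additionally record that $\eta'\big(\{(\sigma,\psi):d_W((\sigma,\psi),\phi)\le(4|W|+1)\epsilon\}\big)\ge(1-\epsilon)\eta\big(\{(\sigma,\psi):d_W((\sigma,\psi),\phi)\le\epsilon\}\big)-\epsilon$, so strong convergence survives the transport. Your own upper-sofic construction already contains what is needed (the pointwise estimate $d_V((\sigma',\xi'),\phi)\le d_V((\sigma,\xi),\phi)+C_{|V|}\epsilon$ on the good set), so the repair is to abandon the model-counting argument and run the same measure transport on strongly convergent adapted sequences for the lower-sofic entropy as well.
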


This motivates the next result, which is a minor extension of \cite{ES10}, Theorem 2.
\begin{thm}\label{thm:Zasymptotic}
Let $G$ be an amenable group and $\sK=\{\kappa_i\}_{i=1}^\infty, \sL=\{\lambda_i\}_{i=1}^\infty$ be random sofic approximations to $G$. Suppose that for each $i$, $\kappa_i:G \to \sym(m_i)$, $\lambda_i:G \to \sym(m'_i)$ and $\lim_{i\to\infty} \frac{m_i}{m'_i}=1$. Then $\sK$ and $\sL$ are asymptotic.
\end{thm}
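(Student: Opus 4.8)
The plan is to reduce the statement, via a standard exhaustion argument, to the following concrete task: given two random sofic approximations $\sK = \{\kappa_i\}$, $\sL=\{\lambda_i\}$ to the amenable group $G$ with $m_i/m_i' \to 1$, and given a finite window $W \subset G$ and $\epsilon > 0$, show that for all sufficiently large $i$ the measures $\kappa_i$ and $\lambda_i$ are $(W,\epsilon)$-close in the sense of Definition \ref{defn:close}. Fix a left-F\o lner set $F = F_n$ that is left-$(W \cup W^{-1} \cup \{e\}, \delta)$-invariant for a small parameter $\delta = \delta(\epsilon)$ to be chosen, and also pick a symmetric F\o lner-like "tiling shape" --- here I would invoke the Ornstein--Weiss $\epsilon$-quasitiling theorem to obtain a finite collection of shapes $\{F^{(1)},\dots,F^{(k)}\}$ (translates of large F\o lner sets) such that any sufficiently invariant finite set, and in particular the sofic approximations themselves via their local structure, can be $\epsilon$-quasitiled by disjoint translates of these shapes. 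The point of quasitiling is that a sofic approximation $\sigma_i$ looks, on a $(1-\epsilon)$-fraction of $[m_i]$, like a disjoint union of "local copies" of these F\o lner shapes acting on themselves (this is the content of the first defining property of a sofic approximation, iterated over the elements of a large window).

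The core of the argument is then a matching/counting step. Condition on $(\sigma,\sigma')$ drawn from a coupling $\vartheta$ of $\kappa_i$ and $\lambda_i$ which we are free to construct --- since no interaction is required except on the marginals, we may take $\vartheta = \kappa_i \times \lambda_i$. For a typical such pair, both $\sigma$ on $[m_i]$ and $\sigma'$ on $[m_i']$ decompose into roughly $m_i/|F|$ and $m_i'/|F|$ disjoint "tiles," each tile being a subset on which the relevant partial orbit map agrees with the left-translation action on a single F\o lner shape. Because $m_i/m_i' \to 1$, the number of tiles of each shape type on the two sides differs by only $o(m_i)$; hence we can pair up tiles of matching shape type between $[m_i]$ and $[m_i']$, discarding an $o(m_i)$-fraction that has no partner, and within each paired-up tile define $\beta$ to be the canonical shape-respecting bijection (both tiles being indexed by the same F\o lner shape). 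Let $Q \subset [m_i]$ be the union of the matched tiles minus the "defect" set of the quasitiling and minus the (negligible) set where the local structure fails; define $Q'$ symmetrically, and $\beta : Q \to Q'$ as above. Properties (1) and (2) of Definition \ref{defn:close} then hold for all $w \in W$ by construction: if $q$ and $\sigma(w)q$ both lie in $Q$, then (for $i$ large, so that $W$ lies in the window controlling the tile structure) they lie in the same tile, on which $\sigma(w)$ is left-translation by $w$ inside that F\o lner shape, and the same is true on the $\sigma'$-side tile matched to it, so $\sigma'(w)\beta(q) = \beta(\sigma(w)q)$. Property (3), $|Q| \ge (1-\epsilon) m_i$ and $|Q'| \ge (1-\epsilon) m_i'$, follows by accounting: the quasitiling covers a $(1-\epsilon')$-fraction, the local-structure failure set is $o(m_i)$, and the unmatched tiles form an $o(m_i)$-fraction by the ratio hypothesis, so choosing $\epsilon'$ and $\delta$ small enough in terms of $\epsilon$ gives the bound. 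Taking $\epsilon$ arbitrary, and then letting $W$ exhaust $G$, yields that $\sK$ and $\sL$ are asymptotic.

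The main obstacle, and the one place where amenability is genuinely used rather than just the ratio hypothesis, is making the tile decomposition of a sofic approximation precise and uniform enough that the matching between the two approximations is legitimate: one must control simultaneously (a) that the quasitiling shapes are few and fixed (so "matching tiles of the same shape type" is meaningful), (b) that the local-structure defect set is genuinely $o(m_i)$ uniformly over the $\kappa_i$-typical $\sigma$ (this requires unpacking the $\kappa_i \times u_{m_i}$ definition of a random sofic approximation together with Markov's inequality to pass from "typical point" to "typical $\sigma$, and then most points"), and (c) that the number of tiles of each shape type is determined up to $o(m_i)$ by $m_i$ alone and not by the particular $\sigma$ --- this last point is where the Ornstein--Weiss machinery, which produces quasitilings with controlled shape-frequencies, does the real work. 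This is exactly the content of \cite{ES10}, Theorem 2; the only extension needed here is to allow the two approximations to live on slightly different index sets ($m_i \ne m_i'$), which the ratio hypothesis $m_i/m_i' \to 1$ handles by the padding/discarding argument above, and to phrase everything for random rather than deterministic approximations, which is immediate since we integrate against $\kappa_i$ and $\lambda_i$ and use that the "bad $\sigma$" set has vanishing measure.
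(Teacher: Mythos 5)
Your strategy is, at bottom, the same as the paper's: both arguments reduce the theorem to the Elek--Szab\'o structure result for sofic approximations of amenable groups (the paper quotes their Proposition 2.8: every sufficiently good approximation to $(G,S)$ is $(S,\frac{1}{r})$-close to an integer multiple of a fixed sum $\alpha\cdot(\sigma_1,\ldots,\sigma_n)$ of copies of F\o lner sets), use Markov's inequality to pass from the random approximation $\kappa_i$ to typical $\sigma$, and use $\lim_i m_i/m'_i=1$ to absorb the size mismatch. The organizational difference --- the paper compares both $\kappa_i$ and $\lambda_i$ to a single canonical deterministic model $k_i\alpha\cdot(\sigma_1,\ldots,\sigma_n)$ and then uses transitivity of $(S,\epsilon)$-closeness, whereas you match tiles directly between a typical pair $(\sigma,\sigma')$ under the product coupling --- is not a difference of substance.

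Two steps in your sketch need repair. First, the claim that if $q$ and $\sigma(w)q$ both lie in $Q$ then they lie in the same tile is false as stated: a point near the boundary of its tile can have its $w$-translate land in an adjacent tile, and since your matching of tiles does not respect adjacency, condition (1) of Definition \ref{defn:close} fails at such points (take $G=\Z$, $W=\{1\}$, $q$ the right endpoint of an interval tile; $\sigma'(1)\beta(q)$ has nothing to do with $\beta(\sigma(1)q)$). The fix is routine --- delete from $Q$ and $Q'$ the $W$-boundary of each tile, which costs only a $\delta$-fraction because the shapes were chosen left-$(W\cup W^{-1},\delta)$-invariant, so condition (3) survives --- but it must be said, since the definition quantifies over \emph{all} $q\in Q$ with $\sigma(w)q\in Q$. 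Second, your point (c), that the number of tiles of each shape type is determined up to $o(m_i)$ by $m_i$ alone, is not delivered by the Ornstein--Weiss quasitiling theorem; controlling the shape frequencies canonically is exactly the nontrivial content of \cite{ES10}, and even granting their Proposition 2.8 the multiplicity $k_i(\sigma)$ a priori depends on $\sigma$ --- the paper spends a separate argument replacing it by a common $k_i$, using the size estimate forced by closeness ($k_i(\sigma)q\approx m_i$) together with transitivity of closeness. Since you do ultimately cite \cite{ES10} for this, the issue is one of attribution and missing detail rather than a wrong idea, but as written the ``counts match up to $o(m_i)$'' step is asserted, not proved.
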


To prove this, we need some terminology adapted from \cite{ES10}. 


\begin{defn}
Let $G$ be a finitely generated group and $S=S^{-1} \subset G$ a finite generating set for $G$. Let $\sigma:G \to \sym(m)$ be a map. Then $\sigma$ is an {\em $r$-approximation} to $(G,S)$ (for $r$ a positive integer) if there exists a set $V \subset [m]$ such that $|V| \ge (1-\frac{1}{r})m$ and for every $v\in V$ and every sequence $g_1,\ldots, g_r \in S\cup \{e\}$,
$$\sigma(g_1)\sigma(g_2)\cdots\sigma(g_r)v = \sigma(g_1g_2\cdots g_r)v$$
and if $ g_1',\ldots, g_r' \in S\cup\{e\}$ are such that $g_1\cdots g_r \ne g_1'\cdots g_r'$ then
$$\sigma(g_1g_2 \cdots g_r)v\ne \sigma(g_1'g_2'\cdots g_r')v.$$


\end{defn}

\begin{defn}
If $T \subset G$ and $\sigma:G \to \sym(T)$ is a map then $\sigma$ is a {\em copy} of $T$ if $\sigma(g)t=gt$ for every $t\in T$ and $g\in G$ with $gt \in T$. 
\end{defn}



\begin{defn}
If $\sigma_i: G\to \sym(m_i)$ ($i=1,\ldots,n$) are maps then $\sigma_1+\cdots + \sigma_n$ is defined to be the map from $G$ to $\sym(m_1+\cdots + m_n)$ defined by
$$(\sigma_1+\cdots + \sigma_n)(g)p:=\sigma_i(g)p$$
where $i$ is determined by: if $p\le m_1$ then $i=1$; otherwise $i$ is such that $\sum_{j=1}^{i-1} m_{j} < p \le \sum_{j=1}^{i} m_{j}$. Also if $r\ge 1$ is an integer then $r\sigma_1:=\sigma_1+\cdots + \sigma_1$ ($r$ summands). So if $\alpha=(\alpha_1,\ldots,\alpha_r)$ is a vector of positive integers and $(\sigma_1,\ldots,\sigma_r)$ is a vector of maps then $\alpha \cdot (\sigma_1,\ldots,\sigma_r)$ is defined to be the map $\alpha_1\sigma_1+\cdots+ \alpha_r \sigma_r$.
\end{defn}

\begin{prop}
Let $G$ be a finitely generated amenable group with finite generating set $S=S^{-1}$. Let $\{F_i\}_{i=1}^\infty$ be a left-F\o lner sequence for $G$. Then for each integer $r>0$ there exists an integer $R(r)>0$ (which also depends on $\{F_i\}_{i=1}^\infty$), a finite subsequence $\{T_1,\ldots,T_n\} \subset \{F_i\}_{i=1}^\infty$ and a vector $\alpha=(\alpha_1,\ldots,\alpha_n)$ of positive natural numbers such that the following holds. Every $T_i$ satisfies $|ST_i \Delta T_i| \le |T_i|/r$ and every map $\sigma:G \to \sym(m)$ which is an $R(r)$-approximation to $(G,S)$ is  $(S,\frac{1}{r})$-close to an integer multiple of $\alpha \cdot (\sigma_1,\ldots, \sigma_n)$ where $\sigma_i$ is any copy of $T_i$.
\end{prop}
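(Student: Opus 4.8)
The plan is to read off the decomposition of $\sigma$ from the Ornstein--Weiss quasi-tiling theorem, transplanted onto $\sigma$ through the local structure of $R$-approximations; this is essentially the argument of \cite{ES10}, Theorem~2, and I point out below the extra ingredient that is needed. Here is the local structure. Fix provisionally finite sets $T_1,\dots,T_n\subset G$ and let $D$ bound the diameters of the sets $T_j^{-1}T_i$ in $(G,S)$. If $\sigma:G\to\sym(m)$ is an $R$-approximation with $R\gg D$, then there is $V\subseteq[m]$ with $|V|\ge(1-1/R)m$ such that for each $v\in V$ the orbit map $g\mapsto\sigma(g)v$ is injective on the radius-$R$ ball of $(G,S)$ and satisfies $\sigma(s)\sigma(g)v=\sigma(sg)v$ whenever $s\in S$ and $|g|_S<R$. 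Two consequences: for $v\in V$ the set $P_v(T_i):=\{\sigma(t)v:t\in T_i\}$ is a copy of $T_i$, disjoint from $P_{v'}(T_j)$ unless $v$ and $v'$ are $O(D)$-close in the $\sigma$-pseudometric; and the whole local pattern of packings and overlaps near a point of $V$ coincides with the one in $(G,S)$ out to radius comparable with $R$. So covering all but a small part of $[m]$ by disjoint copies of the $T_i$ is exactly the Ornstein--Weiss quasi-tiling problem for $G$, carried out inside $V$.

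Given $r$, fix $\epsilon>0$ small relative to $1/r$. The Ornstein--Weiss $\epsilon$-quasi-tiling theorem produces finitely many members $T_1,\dots,T_n$ of $\{F_i\}$, taken invariant enough that $|ST_i\,\Delta\,T_i|\le|T_i|/r$ and that the $S$-interior $\{t\in T_i:St\subseteq T_i\}$ has size $\ge(1-1/(2r))|T_i|$, such that every sufficiently invariant finite subset of $G$ can be partitioned into disjoint translates of $T_1,\dots,T_n$ plus a leftover of relative size $\le\epsilon$ (this and the fact that the $T_i$ may be taken in $\{F_i\}$ are standard consequences of the Ornstein--Weiss machinery; see \cite{ES10}). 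Transplanting onto $\sigma$ via the first paragraph and choosing $R(r)$ large enough (depending on the $T_i$ and $\epsilon$), every $R(r)$-approximation $\sigma:G\to\sym(m)$ acquires a partition $[m]=E\sqcup\bigsqcup_{i=1}^{n}\bigsqcup_{j=1}^{c_i}P_{i,j}$ with $|E|\le\epsilon m$ and each $P_{i,j}$ a copy of $T_i$. The crucial additional point --- where \cite{ES10} does the real work --- is that this quasi-tiling can be arranged so that $(c_1,\dots,c_n)$ is, up to an $O(\epsilon m)$ error pushed into $E$, an integer multiple of a \emph{fixed} vector $\alpha=(\alpha_1,\dots,\alpha_n)$ of positive integers depending only on $r$ and $\{F_i\}$: one fixes once and for all a quasi-tiling of $G$ by the $T_i$, lets $d_i>0$ be the density of the set of $T_i$-tile centers in it, chooses a rational vector normalized to within $\epsilon/n$ of $(d_1,\dots,d_n)$, takes $\alpha$ to be its numerators after clearing denominators, and performs on $\sigma$ the quasi-tiling obtained by copying that fixed tiling of $G$; this gives $c_i=k\alpha_i+O(\epsilon m/|T_i|)$ with $k=\lfloor m/\sum_j\alpha_j|T_j|\rfloor$, and replacing $c_i$ by $\min(c_i,k\alpha_i)$ (surplus tiles into $E$) leaves $c_i\le k\alpha_i$ and $\sum_i(k\alpha_i-c_i)|T_i|=O(\epsilon m)$.

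Now the matching. Put $\sigma'=k\cdot(\alpha\cdot(\sigma_1,\dots,\sigma_n)):G\to\sym(m')$ with $m'=k\sum_i\alpha_i|T_i|$, a disjoint union of exactly $k\alpha_i$ copies of each $T_i$. For each $i$ fix a bijection between $c_i$ of the $T_i$-copies on the $\sigma$-side and $c_i$ of those on the $\sigma'$-side, and inside a matched pair $(P,P')$ use the identification $\phi'\circ\phi^{-1}$, where $\phi:T_i\to P$ and $\phi':T_i\to P'$ are the parametrizations defining the two copies. Let $Q\subseteq[m]$ and $Q'\subseteq[m']$ be the unions of the $S$-interiors $\phi(\{t:St\subseteq T_i\})$ and $\phi'(\{t:St\subseteq T_i\})$ of the matched copies, and let $\beta:Q\to Q'$ be the induced bijection. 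If $q=\phi(t)\in Q$, $s\in S$, and $\sigma(s)q\in Q$, then $\sigma(s)q=\phi(st)$ and $\sigma'(s)\beta(q)=\phi'(st)=\beta(\sigma(s)q)$ by the copy property of $\phi$ and $\phi'$; this is condition~(1) of Definition~\ref{defn:close}, and condition~(2) is symmetric. For condition~(3): $|Q|$ and $|Q'|$ are each at least $(1-1/(2r))$ times the total mass of the matched copies, which is $(1-O(\epsilon))m$ on the $\sigma$-side and $(1-O(\epsilon))m'$ on the $\sigma'$-side, while $|m-m'|<\sum_j\alpha_j|T_j|\le\epsilon m$ because $m$ is at least the fixed cardinality of the radius-$R(r)$ ball of $(G,S)$ (which injects into $[m]$) and $R(r)$ was chosen large; since $\epsilon$ is small relative to $1/r$ we get $|Q|\ge(1-1/r)m$ and $|Q'|\ge(1-1/r)m'$, so $\sigma$ is $(S,1/r)$-close to $\sigma'$, an integer multiple of $\alpha\cdot(\sigma_1,\dots,\sigma_n)$, as required.

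I expect the main obstacle to be the universality of $\alpha$: one needs a single multiplicity vector valid simultaneously for all $R(r)$-approximations, which forces the quasi-tiling to be implemented by a rule using only bounded-range data, so that a fixed ``master'' quasi-tiling of $G$ can be copied onto every sufficiently good $\sigma$ and the tile densities are thereby pinned down. This is precisely the point at which the argument refines \cite{ES10}; the local-structure and matching steps are routine.
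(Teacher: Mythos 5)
The paper does not actually prove this proposition: its entire proof is the line ``This is Proposition 2.8 of \cite{ES10} (in different terminology)'', so what you are attempting is a reconstruction of the Elek--Szab\'o argument. Your local-structure and matching paragraphs are indeed routine and essentially fine; the problem is that the step carrying all of the content --- producing one fixed multiplicity vector $\alpha$ valid simultaneously for \emph{every} $R(r)$-approximation --- is only described, not proved, and the mechanism you propose for it does not work as stated. You fix ``once and for all a quasi-tiling of $G$'', let $d_i$ be ``the density of the set of $T_i$-tile centers'', and then ``perform on $\sigma$ the quasi-tiling obtained by copying that fixed tiling of $G$''. Two objections. First, an Ornstein--Weiss quasi-tiling of the infinite group is produced by a greedy/exhaustion argument and need not have well-defined tile-center densities; nothing canonical pins the numbers $d_i$ down. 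Second, and more seriously, a global tiling of $G$ cannot be ``copied'' onto an arbitrary $R(r)$-approximation: the identification of $\sigma$-neighborhoods with balls of $(G,S)$ transports only patterns determined by bounded-range local rules, whereas a tiling of $G$ has no equivariant bounded-range description (already for $G=\Z$ a tiling by intervals requires choosing a phase, and a disjoint union of long cycles --- a typical $R$-approximation --- carries no canonical phase). You acknowledge exactly this in your closing paragraph and call it ``the main obstacle''; but that obstacle \emph{is} the proposition, so as it stands the proposal has a genuine gap precisely where the cited result does its work.

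The standard way to pin the proportions down --- and in essence what \cite{ES10} does --- is a two-level construction rather than a master tiling of $G$: choose a single very invariant F\o lner set $F$ (much more invariant than the $T_i$), fix once and for all a quasi-tiling of the \emph{finite} set $F$ by translates of $T_1,\ldots,T_n$ with multiplicities $\alpha_1,\ldots,\alpha_n$, then run the Ornstein--Weiss quasi-tiling argument directly on $[m]$ to cover most of $[m]$ by copies of $F$, and subdivide each $F$-copy by the fixed internal pattern. Every $F$-tile then contributes the same vector $\alpha$, so the total counts are automatically $k\alpha + O(\epsilon m)$ with $k$ the number of $F$-tiles, which is exactly what your second paragraph needs. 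A smaller point: Ornstein--Weiss yields $\epsilon$-disjoint quasi-tilings, not partitions of every sufficiently invariant set into exactly disjoint translates of the $T_i$ plus an $\epsilon$-small remainder (exact tilings of general amenable groups are a much harder matter); your matching step survives this because overlap points can be discarded from $Q$ and $Q'$, but the assertion of a genuine partition should not be presented as a ``standard consequence'' of the Ornstein--Weiss machinery.
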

\begin{proof}
This is Proposition 2.8 of \cite{ES10} (in different terminology).
\end{proof}


\begin{proof}[Proof of Theorem \ref{thm:Zasymptotic}]

Let $W \subset G$ be finite and $\epsilon>0$. It suffices to show that $\kappa_i$ is $(W,\epsilon)$-close to $\lambda_i$ for all sufficiently large $i$. Let $S=W \cup W^{-1}$. Without loss of generality, we may assume $G$ is generated by $S$.

Given integers $m,r>0$, let $X(m,r)$ be the set of all $(\sigma,p) \in \sym(m)^G$ such that
$$\sigma(g_1)\sigma(g_2)\cdots \sigma(g_r)p=\sigma(g_1\cdots g_r)p\quad \forall g_1,g_2,\ldots, g_r \in S\cup\{e\}$$
and if $g_1',\ldots, g_r' \in S\cup\{e\}$ are such that $g_1g_2\cdots g_r \ne g_1'g_2'\cdots g_r'$ then
$$\sigma(g_1g_2 \cdots g_r)p\ne \sigma(g_1'g_2'\cdots g_r')p.$$
For $\sigma \in \sym(m)^G$, let $X_\sigma(m,r)$ be the set of all $p \in [m]$ with $(\sigma,p) \in X(m,r)$.

Now let $\sK=\{\kappa_i\}_{i=1}^\infty$ be a random sofic approximation. By definition, there exists an $N$ such that for all $i>N$, $\kappa_i\times u_{m_i} (X(m_i,r)) \ge 1-\frac{1}{r^2}$ where $u_{m_i}$ is the uniform probability measure on $[m_i]$. Let $\sigma$ be an element of $\sym(m_i)^G$ chosen at  random with law $\kappa_i$. By Markov's inequality,
$$\mathbb{P}( m_i-|X_\sigma(m_i,r)| > m_i/r ) \le \bE[ m_i-|X_\sigma(m_i,r)| ] (r/m_i) \le (m_i/r^2)(r/m_i) = (1/r).$$
Therefore, 
$$\kappa_i( \{\sigma \in \sym(m_i)^G:~|X_\sigma(m_i,r)| \ge m_i(1-1/r)\}) \ge 1-(1/r).$$
Note that each $\sigma \in \sym(m_i)^G$ with $|X_\sigma(m_i,r)| \ge m_i(1-1/r)$ is an $r$-approximation to $(G,S)$.

By the previous Proposition, there exist a finite sequence $\{T_1,\ldots,T_n\}$ of finite sets $T_j \subset G$ and a vector $\alpha=(\alpha_1,\ldots,\alpha_n)$ of positive natural numbers such that the following holds. Every $T_j$ satisfies $|ST_j \Delta T_j| \le |T_j|/r$ and for every map $\sigma:G \to \sym(m_i)$ which is an $r$-approximation to $(G,S)$  there is an integer $k_i(\sigma)$ such that $\sigma$ is $(S,\frac{1}{r})$-close  to $k_i(\sigma)\alpha \cdot (\sigma_1,\ldots, \sigma_n)$ where $\sigma_j$ is a copy of $T_j$ for $1\le j \le n$.

We would like to choose the integers $k_i(\sigma)$ to be independent of $\sigma$ (although they must depend on $i$). So let $k_i$ be the minimum of $k_i(\sigma)$ over all maps $\sigma:G\to \sym(m_i)$ which are $r$-approximations to $(G,S)$. We claim that each such $\sigma$ is $(S,\frac{3}{r})$-close to $k_i\alpha \cdot (\sigma_1,\ldots, \sigma_n)$. 

To prove the claim, first note that if $\sigma: G\to \sym(m)$ and $\sigma':G\to \sym(m')$ are any two maps that are $(S,\frac{1}{r})$-close to each other then $1-1/r \le \frac{m}{m'}  \le \frac{1}{1-1/r}$. So let $q$ be the integer such that $\alpha \cdot (\sigma_1,\ldots, \sigma_n)$ is a map from $G$ into $\sym(q)$. Then if $\sigma$ is any map from $G$ into $\sym(m_i)$ which is an $r$-approximation to $(G,S)$ then $1-1/r \le \frac{k_i(\sigma)q}{m_i} \le \frac{1}{1-1/r}$. Since this is true for every such $\sigma$, it follows that for any such $\sigma$,
$$1-2/r \le (1-1/r)^2 \le \frac{k_iq}{k_i(\sigma)q}.$$
Clearly, $k_i(\sigma)\alpha \cdot (\sigma_1,\ldots, \sigma_n)$ and $k_i\alpha \cdot (\sigma_1,\ldots, \sigma_n)$ are $(S,\epsilon)$-close for any $\epsilon$ with $1-\epsilon \le \frac{k_iq}{k_i(\sigma)q}$. So we can choose $\epsilon=2/r$. 

It follows readily from the definition of closeness that if $\sigma$ is $(S,\epsilon)$-close to $\sigma'$, $\sigma'$ is $(S,\epsilon')$-close to $\sigma''$ and $\epsilon+\epsilon'\le 1$ then $\sigma$ is $(S,\epsilon+\epsilon')$-close to $\sigma''$. So: if $\sigma:G \to \sym(m_i)$ is an $r$-approximation to $(G,S)$ then it is $(S,1/r)$-close to $k_i(\sigma)\alpha \cdot (\sigma_1,\ldots, \sigma_n)$ which is $(S,2/r)$-close to $k_i\alpha \cdot (\sigma_1,\ldots, \sigma_n)$. Thus $\sigma$ is $(S,3/r)$-close to $k_i\alpha \cdot (\sigma_1,\ldots, \sigma_n)$ (whenever $r \ge 3$).

Let $\tau_i = k_i\alpha \cdot (\sigma_1,\ldots, \sigma_n)$. From the claim it follows that $\kappa_i$ is $(S,3/r)$-close to the probability measure supported on $\{\tau_i\}$.

By choosing $N$ larger if necessary, we may assume that for $i>N$, there is (by a similar argument) an integer $k'$ such that if $\tau'_i=k_i\alpha \cdot (\sigma_1,\ldots, \sigma_n)$ then $\lambda_i$ is $(S,\frac{3}{r})$-close to the probability measure supported on $\{\tau'_i\}$. Note $1-3/r\le \frac{k'_iq}{m'_i} \le \frac{1}{1-3/r}$. So 
$$(1-3/r)^2\min(m_i/m'_i, m'_i/m_i) \le \frac{k'_iq}{k_iq} \le \frac{\max(m_i/m'_i, m'_i/m_i)}{(1-3/r)^2}.$$
This implies $\tau_i$ and $\tau'_i$ are $(S, 1 - (1-3/r)^2\min(m_i/m'_i, m'_i/m_i))$-close. So $\kappa_i$ and $\lambda_i$ are $(S, \frac{6}{r} + 1 - (1-3/r)^2\min(m_i/m'_i, m'_i/m_i) )$-close to each other.  Since $\lim_{i\to\infty} \frac{m_i}{m'_i} = 1$ and $r$ can be made arbitrarily large, we have shown that for every $\epsilon>0$ there exists an $N$ such that $i>N$ implies $\kappa_i$ and $\lambda_i$ are $(S,\epsilon)$-close which implies the theorem.


\end{proof}


\begin{lem}
Let $\sK=\{\kappa_i\}_{i=1}^\infty$ be a random sofic approximation of $\Z$. Let $\bX=(T,X,\mu,\phi)$ be a process over $\Z$ with finite range $A$. Then $h(\sK,\bX) = \bh(\sK,\bX).$
\end{lem}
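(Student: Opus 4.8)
The plan is as follows. By Corollary~\ref{cor:bhinequality} we already have $h(\sK,\bX)\le\bh(\sK,\bX)$, so everything reduces to proving $\bh(\sK,\bX)\le h(\sK,\bX)$. First I would reduce to a concrete sofic approximation. Writing $\kappa_i$ on $\sym(m_i)^\Z$ (we may assume $m_i\to\infty$), let $\sigma_i^c:\Z\to\sym(m_i)$ send $n$ to the cyclic shift $p\mapsto p+n\bmod m_i$, and put $\sK^c:=\{\delta_{\sigma_i^c}\}_i$. Since $m_i/m_i\to 1$, Theorem~\ref{thm:Zasymptotic} gives that $\sK$ and $\sK^c$ are asymptotic, and then Theorem~\ref{thm:asymptotic2} gives $h(\sK,\bX)=h(\sK^c,\bX)$ and $\bh(\sK,\bX)=\bh(\sK^c,\bX)$; so it suffices to treat $\sK=\sK^c$. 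In this case an adapted approximate process is described by a sequence of probability measures $\eta_i$ on $A^{\Z/m_i\Z}$, with $h(\bX_i)=\tfrac1{m_i}H(\eta_i)$, with $d_W((\sigma^c_i,\psi),\phi)=\tfrac12\|\mathrm{emp}_W(\psi)-\phi^W_*\mu\|_1$ where $\mathrm{emp}_W(\psi)\in\cP(A^W)$ is the empirical distribution of $W$-patterns of $\psi$, and with $\lim_i\bX_i=\bX$ meaning $\lim_i\int\mathrm{emp}_W(\psi)\,d\eta_i(\psi)=\phi^W_*\mu$ in total variation for every finite $W\subset\Z$.

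Given this, the core of the argument is to show that an arbitrary adapted sequence $\{\bX_i\}$ with $\lim_i\bX_i=\bX$ can be replaced by a \emph{strongly} convergent adapted sequence $\{\bX'_j\}$ with $\limsup_j h(\bX'_j)\ge\limsup_i h(\bX_i)$. Once this is done, the Proposition preceding Corollary~\ref{cor:bhinequality} (which identifies $h(\sK,\bX)$ with the supremum of $\limsup_j h(\bX'_j)$ over strongly convergent adapted sequences) yields $h(\sK,\bX)\ge\limsup_i h(\bX_i)$, and taking the supremum over all such $\{\bX_i\}$ gives $h(\sK,\bX)\ge\bh(\sK,\bX)$, as desired.

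To construct $\{\bX'_j\}$ I would use concatenation. For $i<j$ set $L:=\lfloor m_j/m_i\rfloor$, draw $\psi_1,\dots,\psi_L$ independently from $\eta_i$, concatenate them (followed by a fixed padding word of length $m_j-Lm_i<m_i$) into a configuration in $A^{\Z/m_j\Z}$, and condition on the event $E_{i,j}$ that $\tfrac1L\sum_{s\le L}\mathrm{emp}_{W_j}(\psi_s)$ is within $\epsilon_j$ of its mean, where $W_j=[-j,j]\subset\Z$ and $\epsilon_j\downarrow0$; call the resulting law $\eta^i_j$ and $\bX^i_j$ the associated approximate process over $\sK^c$. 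Three facts make this work: (a) a bounded-differences (McDiarmid) estimate shows $\eta_i^{\otimes L}(E_{i,j})\to1$ as $j\to\infty$ for fixed $i$, provided $L\epsilon_j^2\to\infty$; (b) since the concatenation map is injective, conditioning on a near-full event costs little entropy, so $\tfrac1{m_j}H(\eta^i_j)\ge\tfrac{Lm_i}{m_j}\cdot\tfrac{H(\eta_i)}{m_i}-o(1)$; (c) on $E_{i,j}$ the empirical $W$-pattern distribution of the concatenated configuration lies within $\epsilon_j+\|\int\mathrm{emp}_W\,d\eta_i-\phi^W_*\mu\|_1+O(|W|/m_i)+O(m_i/m_j)$ of $\phi^W_*\mu$ for every fixed $W$, using $\bX_i\to\bX$ and crude bounds on the boundaries between blocks and on the padding. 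A diagonal choice of $i=i(j)\to\infty$ (slowly: along a subsequence realising $\limsup_i h(\bX_i)$, with $m_{i(j)}=o(m_j)$ so that $L\,m_{i(j)}/m_j\to1$, and with $L\epsilon_j^2\to\infty$) then yields $\bX'_j:=\bX^{i(j)}_j$: it is adapted to $\sK^c$, it converges strongly to $\bX$ by (c), and $\liminf_j h(\bX'_j)\ge\limsup_i h(\bX_i)$ by (a),(b).

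The main obstacle, and the place where the one-dimensionality of $\Z$ is essential, is exactly this replacement step: weak convergence of local statistics guarantees only that the $\eta_i$-\emph{average} of $\mathrm{emp}_W$ is close to $\phi^W_*\mu$ — an individual $\eta_i$-typical configuration may be far from it — so one cannot directly bound $H(\eta_i)$ by a count of $\psi$ with $d_W((\sigma_i^c,\psi),\phi)<\epsilon$. Splicing together many independent copies along the line and invoking the law of large numbers is what converts this averaged information into configuration-wise control while keeping the entropy density; arranging the diagonalisation over windows $W$, tolerances $\epsilon$, block counts $L$ and indices $i$ so that everything coheres is the bookkeeping one must be careful with.
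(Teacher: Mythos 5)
Your proposal is correct in outline and follows the same overall strategy as the paper --- both reduce via Theorems \ref{thm:Zasymptotic} and \ref{thm:asymptotic2} to the cyclic approximation $\sigma_i(1)=(1,2,\ldots,m_i)$, use Corollary \ref{cor:bhinequality} for one inequality, and then convert a block measure $\nu_i$ on $A^{[m_i]}$ witnessing $\bh$ into configurations on $[m_n]$ by concatenating blocks --- but the implementation of the key step is genuinely different. The paper fixes a single good index $i$, perturbs $\nu_i$ to be rational and replaces it by its average over cyclic shifts, and then counts \emph{exactly} the maps $\psi:[m_n]\to A$ whose blocks have empirical type precisely $\nu_i$: shift-invariance makes every such $\psi$ satisfy $d_W((\sigma_n,\psi),\phi)<\epsilon$, and Stirling shows there are enough of them to lower-bound $N_n(\sigma_n,W,\epsilon)$ directly from the defining formula (\ref{eqn:sofic}) for $h$. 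You instead concatenate i.i.d.\ blocks drawn from $\nu_i$, condition on a law-of-large-numbers event, and appeal to the Proposition preceding Corollary \ref{cor:bhinequality} identifying $h(\sK,\bX)$ with the supremum over \emph{strongly} convergent adapted sequences. Your route avoids the shift-invariantization and exact-type bookkeeping (the cyclic empirical distribution of each block already averages over in-block positions, and conditioning on a near-full event costs negligible entropy), at the price of leaning on that Proposition (whose proof the paper leaves as an exercise) and on a concentration estimate; the paper's argument is more self-contained, feeding straight into the definition of $h(\Sigma,\phi)$. One caveat: your stated sufficient condition $L\epsilon_j^2\to\infty$ in step (a) is not adequate as written when the window $W_j=[-j,j]$ grows with $j$, since the union bound runs over $|A|^{2j+1}$ patterns while $L\le m_j/m_{i(j)}$ is constrained by the given $m_j$; the fix is simply to let the window and tolerance indices grow slowly as functions of $j$ in the diagonalisation (for strong convergence one only needs each fixed $(V,\epsilon)$ eventually controlled), which is exactly the bookkeeping you flag, but it should be made explicit.
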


\begin{proof}
By Theorem \ref{thm:Zasymptotic} and Theorem \ref{thm:asymptotic2}, it suffices to show $h(\Sigma,\phi) = \bh(\Sigma,\phi)$ where $\Sigma=\{\sigma_i\}_{i=1}^\infty$ and $\sigma_i:\Z\to \sym(m_i)$ is the homomorphism with $\sigma_i(1)=(1,2,\ldots,m_i)$. 

By Corollary \ref{cor:bhinequality}, it suffices to show that $h(\Sigma,\phi) \ge \bh(\Sigma,\phi)$. Let $\epsilon>0$ and $W \subset G$ be finite. It suffices to show that
$$\liminf_{n\to\infty} \frac{\log\big( \#\{\psi:\{1,\ldots,m_n\} \to A:~d_W((\sigma_n,\psi),\phi)<\epsilon\}\big)}{m_n} \ge \bh(\Sigma,\phi) - \epsilon.$$

The definition of $\bh(\Sigma,\phi)$ implies there exists an $i$ and a probability measure $\nu_i$ on $A^{[m_i]}$ such that if $\delta_i$ is the probability measure supported on the singleton $\{\sigma_i\}$ and $\bX_i$ is the approximate process constructed from $\eta_i:=\delta_i\times \nu_i$  then
\begin{itemize}
\item $d_W(\bX_i,\bX)< \epsilon/2$;
\item $h(\bX_i) \ge \bh(\Sigma,\phi) - \epsilon/2$;
\item there is an $N>0$ such that $W \subset [-N,N]$ and $\frac{2N}{m_i} < \epsilon/2$. 
\end{itemize}
After perturbing $\nu_i$ if necessary, we may assume that there is an integer $d>0$ such that $d\nu_i$ is integral (i.e., $d\nu_i(\{\xi\}) \in \Z~\forall \xi \in A^{[m_i]}$). We may also assume that $\nu_i$ is $\sigma_i$-invariant by replacing it with $\frac{1}{m_i}\sum_{j=1}^{m_i} (\sigma^j_i)_*\nu_i$ if necessary.

Let $n>i$ be a large number (to be specified later). Let $k=\lfloor \frac{m_n}{dm_i} \rfloor$. We will say that a function $\psi:[m_n] \to A$ is {\em good} if for every $\xi \in A^{[m_i]}$, the number of $j$ with $0 \le j <kd$ satisfying
$$\psi(jm_i + p) = \xi(p), ~~\forall 1\le p \le m_i$$
is exactly $kd\nu_i(\{\xi\})$. For such a $\psi$ let $\bX_\psi$ be the approximate process constructed from $\delta_n \times \delta_\psi$ where $\delta_\psi$ is the probability measure concentrated on $\{\psi\} \subset A^{[m_n]}$. 

In order to estimate $d_W(\bX_{\psi}, \bX_i)$, let $u_{m_n}$ be the uniform probability measure on $[m_n]$ (so $\psi^W_*u_{m_n}$ is the $W$-local statistics of $\bX_{\psi}$ by definition \ref{defn:localstatistics}). If $u'_{m_n}$ is the uniform probability measure on 
$$K=\{p \in [m_n]:~ p=jm_i + q \textrm{ for some } 0\le j <kd \textrm{ and some } 1 + N\le q\le m_i-N\}$$
then $\psi^W_*u'_{m_n}$ is the $W$-local statistics of $\bX_i$. This uses the fact that $\nu_i$ is $\sigma_i$-invariant and $W \subset [-N,N]$. Since $|K| = kd(m_i-2N)$, it follows that
\begin{eqnarray*}
d_W(\bX_{\psi}, \bX_i)  &\le&  1 - \frac{kd(m_i-2N)}{m_n}.
\end{eqnarray*}
Let $N_0$ be large enough so that if $n>N_0$ then  $1-\frac{kd(m_i-2N)}{m_n} < \epsilon/2$. By choice of $\bX_i$, this implies $d_W(\bX_{\psi}, \bX) < \epsilon$. We will now assume that $n>N_0$.

The number of good functions $\psi: [m_n] \to A$ is
$$|A|^{(m_n-kdm_i)}(kd)! \Big(\prod_{\xi \in A^{[m_i]}} \big(kd\nu_i(\{\xi\})\big)! \Big)^{-1}.$$
Stirling's formula implies that 
$$ \lim_{m_n\to\infty} \frac{\log\Big[|A|^{(m_n-kdm_i)} (kd)! \Big(\prod_{\xi \in A^{[m_i]}} \big(kd\nu_i(\{\xi\})\big)! \Big)^{-1}\Big]}{kd} = H(\nu_i).$$
Therefore,
$$\liminf_{n \to \infty} \frac{\log\big( \#\{\psi:\{1,\ldots,m_n\} \to A:~d_W((\sigma_n,\psi),\phi)<\epsilon\}\big)}{m_n} \ge \frac{H(\nu_i)}{m_i} = h(\bX_i) \ge \bh(\Sigma,\phi) -\epsilon.$$
Because $\epsilon>0$ is arbitrary, this implies the lemma.

\end{proof}

\begin{lem}
Let $\sK=\{\kappa_i\}_{i=1}^\infty$ be a random sofic approximation of $\Z$. Let $\bX=(T,X,\mu,\phi)$ be a process over $\Z$. Then $h(\sK,\bX)\ge h(\bX)$.
\end{lem}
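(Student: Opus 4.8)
The plan is to show that $h(\sK,\bX)$ is bounded below by the classical entropy $h(\bX)$ by reducing, via Theorem~\ref{thm:Zasymptotic} and Theorem~\ref{thm:asymptotic2}, to the case of the standard sofic approximation $\Sigma=\{\sigma_i\}$ where $\sigma_i:\Z\to\sym(m_i)$ is the cyclic shift $\sigma_i(1)=(1,2,\ldots,m_i)$. (Here $\bX$ is first replaced by a process with finite range; since classical entropy $h(\bX)=\sup h(\phi|_F)$ over finite sub-observables with $F$ a finite sub-$\sigma$-algebra or just a finite quantization of $\phi$, it suffices to treat finite-range $\bX$, whose sofic entropy is no larger than that of the original process.) So it remains to prove $h(\Sigma,\phi)\ge h(\bX)$ for this concrete $\Sigma$.

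For the concrete $\Sigma$, I would count directly. Fix $\epsilon>0$ and a finite window $W\subset\Z$, say $W\subset[-N,N]$. Choose $n$ large and a ``block length'' $L$ with $N\ll L\ll m_n$; I will build functions $\psi:[m_n]\to A$ by concatenating independent samples of $\phi^{[0,L)}$ along consecutive length-$L$ blocks in $[m_n]$ (cyclically, using $\sigma_n(1)=(1,2,\ldots,m_n)$). Concretely, partition (most of) $[m_n]$ into $\lfloor m_n/L\rfloor$ arcs of length $L$; on each arc independently plant a word $w\in A^{[0,L)}$ chosen so that exactly $(\lfloor m_n/L\rfloor)\,(\phi^{[0,L)}_*\mu)(\{w\})$ of the arcs receive the word $w$ (a ``typical profile'' count, as in the previous lemma's ``good functions''). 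By the same computation via Stirling's formula used in the previous lemma, the number of such $\psi$ is $\exp\!\big((m_n/L)\,H(\phi^{[0,L)}_*\mu)+o(m_n)\big)$, i.e. $\log(\#)/m_n\to H(\phi^{[0,L)}_*\mu)/L$, up to an error tending to $0$ as $n\to\infty$. On the other hand, for any such $\psi$, the $W$-local statistics $\psi^W_{\sigma_n}$ agree with $\phi^W_*\mu$ except on the $O((m_n/L)\cdot N)$ coordinates near block boundaries (plus the $O(L)$ leftover), so $d_W((\sigma_n,\psi),\phi)\le \tfrac{2N}{L}+\tfrac{L}{m_n}$, which is $<\epsilon$ once $L>4N/\epsilon$ and $n$ is large. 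Hence
$$\liminf_{n\to\infty}\frac{\log \#\{\psi:[m_n]\to A:\ d_W((\sigma_n,\psi),\phi)<\epsilon\}}{m_n}\ \ge\ \frac{H(\phi^{[0,L)}_*\mu)}{L}.$$
Letting $L\to\infty$, the right-hand side tends to $h(\bX)$ by Shannon--McMillan--Breiman / subadditivity (the definition of classical entropy as $\lim_L H(\phi^{[0,L)}_*\mu)/L$), and since $W,\epsilon$ were arbitrary this gives $h(\Sigma,\phi)\ge h(\bX)$, hence $h(\sK,\bX)\ge h(\bX)$.

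\textbf{Main obstacle.} The real content is the combinatorial/counting step: producing enough admissible $\psi$ (this is exactly the ``good functions'' device from the previous lemma, so it is essentially in hand) \emph{and} controlling $d_W$ uniformly — the point being that block concatenation only corrupts the local statistics in a $\tfrac{N}{L}$-fraction of sites, so both the count is near-maximal and the distance is near-zero simultaneously. One technical wrinkle is that $\phi^{[0,L)}_*\mu$ need not be rational-valued, so one must round the per-word multiplicities (changing the profile by $O(|A|^L)$ arcs, which is $o(m_n/L)$ for fixed $L$) without affecting the leading-order count or the $d_W$ bound; this is routine. I expect no difficulty from the reduction steps, since Theorems~\ref{thm:Zasymptotic} and~\ref{thm:asymptotic2} are quoted wholesale.
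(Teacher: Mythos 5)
Your proof is correct, but it takes a genuinely different route from the paper's. The paper proves this lemma softly: after the same reduction via Theorems \ref{thm:Zasymptotic} and \ref{thm:asymptotic2} to the cyclic approximation $\Sigma$, it invokes the \emph{previous} lemma ($h(\sK,\bX)=\bh(\sK,\bX)$ for $\Z$) and then only has to check $\bh(\Sigma,\phi)\ge h(\bX)$, which it does with no counting at all: take $\eta_i=\phi^{m_i}_*\mu$ on $A^{[m_i]}$, observe that the associated approximate processes converge to $\bX$ by the F\o lner property of intervals and that $h(\bX_i)=H(\eta_i)/m_i\to h(\bX)$ by the definition of classical entropy. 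You instead bound the lower-sofic entropy directly, by explicitly constructing and counting microstates: plant words sampled with the exact empirical profile $\phi^{[0,L)}_*\mu$ on length-$L$ arcs, get $\log(\#)/m_n\to H(\phi^{[0,L)}_*\mu)/L$ by Stirling, and control $d_W$ by the $2N/L+L/m_n$ boundary fraction (your use of $T$-invariance of $\mu$ to see that the interior window statistics are exactly $\phi^W_*\mu$ up to rounding is the right point). This re-runs the ``good functions'' computation of the preceding lemma with the specific measure $\nu=\phi^{[0,L)}_*\mu$; what it buys is self-containedness (you never need upper-sofic entropy or the $h=\bh$ lemma for this step), at the cost of duplicating the combinatorial machinery that the paper factors out and reuses. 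Two small remarks: your opening parenthetical about reducing to finite range is unnecessary (the paper's standing convention is that all observables have finite range), and the claim there that coarsening the observable does not increase sofic entropy is not something you justify — monotonicity under factors is not automatic for sofic entropy — so it is best dropped; also, Shannon--McMillan--Breiman is not needed, since $H(\phi^{[0,L)}_*\mu)/L\to h(\bX)$ is just the definition of the classical entropy rate along the F\o lner sequence of intervals.
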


\begin{proof}
By Theorems \ref{thm:Zasymptotic} and \ref{thm:asymptotic2} and the previous lemma it suffices to show that $\bh(\Sigma,\phi) \ge h(\bX)$ where $\Sigma=\{\sigma_i\}_{i=1}^\infty$ and $\sigma_i:\Z\to \sym(m_i)$ is the homomorphism with $\sigma_i(1)=(1,2,\ldots,m_i)$. 

Let $\phi^{m_i}:X \to A^{[m_i]}$ be the map $\phi^{m_i}(x)(p) = \phi(T^px)~\forall x\in X, p \in [m_i]$. Let $\eta_i :=\phi^{m_i}_*\mu$ be the pushforward measure on $A^{[m_i]}$. Let $\bX_i$ be the approximate process constructed from $\delta_i\times \eta_i$ where $\delta_i$ is the probability measure concentrated on $\{\sigma_i\} \subset \sym(m_i)^\Z$. The F\o lner property of the sequence of intervals $[m_i] \subset \Z$ implies that $\lim_{i\to\infty} \bX_i = \bX$. The definition of $h(\bX)$ implies $\lim_{i\to\infty} h(\bX_i) = h(\bX)$. This implies the lemma. 
 \end{proof}

\begin{proof}[Proof of Proposition \ref{prop:Z}]
By Theorems \ref{thm:Zasymptotic}, \ref{thm:asymptotic2} and the previous lemma, it suffices to prove that if $\Sigma=\{\sigma_i\}_{i=1}^\infty$ where $\sigma_i:\Z\to \sym(m_i)$ is the homomorphism with $\sigma_i(1)=(1,2,\ldots,m_i)$ then $\bh(\Sigma,\bX) \le h(\bX)$. Let $\{\nu_i\}_{i=1}^\infty$ be a sequence of probability measures on $A^{[m_i]}$ such that if $\{\bX_i\}_{i=1}^\infty$ is the sequence of approximate processes constructed from $\eta_i:=\delta_i\times \nu_i$ (where $\delta_i$ is the probability measure concentrated on $\{\sigma_i\}_{i=1}^\infty \subset \sym(m_i)^\Z$) then 
$$\lim_{i\to\infty} \bX_i = \bX, ~\textrm{ and }  \lim_{i\to\infty} h(\bX_i) = \bh(\Sigma,\bX).$$
Let $\bX=(T,X,\mu,\phi)$ where $\phi:X\to A$. Using a standard trick, $\bX$ is equivalent to a process of the form $(\tau,A^\Z,\mu',\phi')$ where $\tau:A^\Z \to A^\Z$ is the shift map $\tau(y)(n):=y(n+1)$ and $\phi':A^\Z \to A$ is the time-$0$ projection $\phi'(y):=y(0)$. To be precise, 
let $\phi^\Z:X \to A^\Z$ be the map $\phi^\Z(x)(n):=\phi(T^nx)$ for $x\in X, n\in \Z$. This map is equivariant. $\bX$ is equivalent to $(\tau,A^\Z, \mu',\phi')$ where $\mu'$ is the pushforward measure $\phi^\Z_*\mu$. So without loss of generality, we will assume that $X=A^\Z$, $T=\tau$ is the shift map and $\phi:A^\Z \to A$ is the time-$0$ projection.

 Let $\pi_n:(A^{[m_n]})^\Z \to A^\Z$ be the map defined by
$$\pi_n[\Psi]( im_n + j) := \Psi(i)(j),~~\forall \Psi \in (A^{[m_n]})^\Z, i \in \Z, j \in [m_n].$$
Let $\nu_n^\Z$ be the measure on $(A^{[m_n]})^\Z$ equal to the product of $\Z$-copies of $\nu_n$. Let $\mu'_n = (\pi_n)_*(\nu_n^\Z)$ be the pushforward measure on $A^\Z$. 
Let
$$\mu_n = \frac{1}{m_n} \sum_{j =1}^{m_n} \tau^j_*\mu'_n.$$
Note that $\mu_n$ is $\tau$-invariant. 

For an interval $[a,b] \subset \Z$, let $\phi^{[a,b]}:A^\Z \to A^{[a,b]}$ be the projection map. By concavity of entropy,
\begin{eqnarray*}
h_{\mu_n}(\phi) &=& \lim_{N\to\infty} \frac{H(\phi_*^{[-m_nN+1,m_nN]}\mu_n)}{2m_nN} \ge  \lim_{N\to\infty} \frac{H(\phi_*^{[-m_nN+1,m_nN]}\mu'_n)}{2m_nN}\\
&=& \lim_{N\to\infty} \frac{2N H(\nu_n)}{2m_nN} =  \frac{H(\nu_n)}{m_n}. 
\end{eqnarray*}

Since $\lim_{i\to\infty} \bX_i = \bX$, it follows that $\lim_n \mu_n=\mu$ in the weak* topology on $M(A^\Z)$, the space of all $\tau$-invariant Borel probability measures on $A^\Z$. It is well-known that the function $\lambda \in M(A^\Z) \mapsto h_\lambda(\phi)$ is upper semi-continuous on $M(A^\Z)$. For example, see [Gl03, Lemma 15.1 page 270]. It follows that 
$$\bh(\Sigma,\phi) = \limsup_{n\to\infty} \frac{H(\nu_n)}{m_n} \le \limsup_{n\to\infty} h_{\mu_n}(\phi) \le h_\mu(\phi)$$
as required.
\end{proof}


\section{Relative entropy}\label{sec:relative} 

\begin{defn}[Factors of approximate processes]\label{defn:composition}
Given an approximate process $\bX=(T,X,\mu,\phi)$ with $\phi:X \to A$ and a function $\beta:A \to B$, let $\beta \circ \bX$ be the approximate process
$$\beta \circ \bX:=(T,X,\mu,\beta \circ \phi).$$
If an approximate process $\bY$ is constructed from a measure $\eta_i$ on $\sym(m_i)^G \times A^{[m_i]}$ as in definition \ref{defn:eta} then $\beta \circ \bY$ has an alternative description as follows. Let 
$$\tbeta: \sym(m_i)^G \times A^{[m_i]} \to  \sym(m_i)^G \times B^{[m_i]},~~ \tbeta(\sigma,\xi) = (\sigma, \beta \circ \xi).$$
Then $\beta \circ \bY$ is equivalent to the process $\bZ$ constructed from the pushforward measure $\tbeta_*\eta_i$. (By equivalent, we mean that $d_W(\beta \circ \bY, \bZ) = 0$ for every finite $W \subset G$ in the notation of definition \ref{defn:process}).
\end{defn}

The next lemma follows immediately from the definitions.
\begin{lem}\label{lem:composition}
If $\{\bX_i\}_{i=1}^\infty$ is a sequence of approximate processes with range $A$, $\lim_{i\to\infty} \bX_i = \bX$ and $\beta:A \to B$ is a map then
$\lim_{i\to\infty} \beta \circ \bX_i = \beta \circ \bX$.
Moreover, if $\lim_{i\to\infty} \bX_i = \bX$ strongly then $\lim_{i\to\infty} \beta \circ \bX_i = \beta \circ \bX$ strongly.
\end{lem}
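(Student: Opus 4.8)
The plan is to reduce both assertions to one elementary fact: pushing a measure forward along a fixed map cannot increase total variation distance, i.e. $\| f_*\lambda_1 - f_*\lambda_2\|_1 \le \|\lambda_1-\lambda_2\|_1$ for any measurable $f$ and any measures $\lambda_1,\lambda_2$. First I would record the compatibility of $\beta$ with the local-statistics construction. For each finite $W\subset G$ let $\beta^W:A^W\to B^W$ be the coordinatewise map $\beta^W(a)(w):=\beta(a(w))$. Then for any approximate process $\bY=(S,Y,\nu,\psi)$ with range $A$ one checks directly from Definition \ref{defn:localstatistics} that $(\beta\circ\psi)_S^W=\beta^W\circ\psi_S^W$, so the $W$-local statistics of $\beta\circ\bY$ is precisely $(\beta^W)_*$ applied to the $W$-local statistics of $\bY$.

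For the first statement I would then apply the contraction property with $f=\beta^W$ to the $W$-local statistics of $\bX_i$ and of $\bX$, obtaining $d_W(\beta\circ\bX_i,\beta\circ\bX)\le d_W(\bX_i,\bX)$ for every finite $W\subset G$. Since $\lim_{i\to\infty}\bX_i=\bX$ means exactly that $d_W(\bX_i,\bX)\to 0$ for every finite $W$, and since the topology on $\cP(G,B)$ is by definition the coarsest one making each $d_W$ continuous, this gives $\lim_{i\to\infty}\beta\circ\bX_i=\beta\circ\bX$.

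For the strong-convergence clause I would use the alternative description in Definition \ref{defn:composition}: if $\bX_i$ is constructed from $\eta_i$ on $\sym(m_i)^G\times A^{[m_i]}$ (Definition \ref{defn:eta}), then $\beta\circ\bX_i$ is equivalent to the process constructed from $\tbeta_*\eta_i$, where $\tbeta(\sigma,\xi)=(\sigma,\beta\circ\xi)$. Fix a finite $W\subset G$ and $\epsilon>0$. The same contraction observation shows that $d_W((\sigma,\xi),\phi)<\epsilon$ implies $d_W((\sigma,\beta\circ\xi),\beta\circ\phi)<\epsilon$ (here one applies $(\beta^W)_*$ to the pair of measures on $A^W$ defining $d_W((\sigma,\xi),\phi)$). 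Hence the $\tbeta_*\eta_i$-measure of $\{(\sigma,\zeta): d_W((\sigma,\zeta),\beta\circ\phi)<\epsilon\}$ equals the $\eta_i$-measure of $\{(\sigma,\xi): d_W((\sigma,\beta\circ\xi),\beta\circ\phi)<\epsilon\}$, which is at least the $\eta_i$-measure of $\{(\sigma,\xi): d_W((\sigma,\xi),\phi)<\epsilon\}$; the latter tends to $1$ by the definition of strong convergence. Therefore $\beta\circ\bX_i\to\beta\circ\bX$ strongly.

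I do not expect any genuine obstacle: the lemma really is a matter of unwinding Definitions \ref{defn:localstatistics} and \ref{defn:composition} together with the definition of strong convergence. The only point needing a moment's care is the bookkeeping identity $(\beta\circ\psi)^W=\beta^W\circ\psi^W$ and making sure the contraction estimate is applied to the correct pair of measures — those living on $A^W$ — before pushing forward to $B^W$.
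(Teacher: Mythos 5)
Your argument is correct, and it is exactly the unwinding the paper has in mind: the paper states this lemma "follows immediately from the definitions," and your proof supplies precisely that, via the identity $(\beta\circ\psi)_S^W=\beta^W\circ\psi_S^W$ and the fact that pushing forward under $\beta^W$ does not increase total variation distance, applied both to $d_W$ of processes and to the sets appearing in the definition of strong convergence.
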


\begin{defn}[Relative entropy]

Let $G$ be a countable amenable group acting by measure-preserving transformations on a standard probability space $(X,\cB,\mu)$. Let $\phi:X \to A$ be a finite observable, $\Sigma$ be a sofic approximation to $G$ and $\cF \subset \cB$ be a $G$-invariant $\sigma$-algebra. Define 
\begin{eqnarray*}
h_\mu(\phi|\cF) &:=& \inf_\psi h_\mu(\phi \vee \psi) - h_\mu(\psi)\\
\end{eqnarray*}
where the infimum is over all finite-range $\cF$-measurable observables $\psi:X \to B$ and $\phi \vee \psi:X \to A\times B$ is the observable $\phi \vee \psi(x)=(\phi(x),\psi(x))$.
In case $\cF$ is the $G$-invariant $\sigma$-algebra generated by an observable $\psi$ then we write $h_\mu(\phi|\psi)=h_\mu(\phi|\cF)$. In case $\bX=(T,X,\mu,\phi)$ is a process over $G$ and $\psi=\beta \circ \phi$ for some $\beta:A \to B$, we write $h(\bX|\beta \circ \bX) = h(\bX)-h(\beta \circ \bX) = h_\mu(\phi|\psi)$. The first equality holds from the Abramov-Rohlin formula [WZ92]. 
\end{defn}

We can now define relative sofic entropy (in a special case).
\begin{defn}[Relative sofic entropy]
Let $\sK$ be a random sofic approximation to $G$. Let $\bX$ be a $G$-process with finite range $A$ and $\beta:A \to B$ a map. Define
$$\bh(\sK,\bX| \beta\circ \bX) = \sup \limsup_{j\to\infty} h(\bX_j) - h(\beta \circ \bX_j)$$
where the supremum is over all sequences $\{\bX_j\}_{j=1}^\infty$ adapted to $\sK'$ (where $\sK'$ is a subsequence of $\sK$) such that $\lim_{j\to\infty} \bX_j = \bX$. Similarly, let 
$$h(\sK,\bX| \beta\circ \bX) = \sup\limsup_{j\to\infty} h(\bX_j) - h(\beta \circ \bX_j)$$
where the supremum is over all sequences $\{\bX_j\}_{j=1}^\infty$ adapted to $\sK'$ (where $\sK'$ is a subsequence of $\sK$) such that $\lim_{j\to\infty} \bX_j = \bX$ strongly. If $\bX=(T,X,\mu,\phi)$ and $\psi = \beta \circ \phi$ then an alternative notation for relative entropy is:
$$\bh(\sK,\phi|\psi) := \bh(\sK,\bX| \beta\circ \bX),~h(\sK,\phi|\psi) := h(\sK,\bX| \beta\circ \bX).$$
We may also write $\bh_\mu(\sK,\phi|\psi)$ or $h_\mu(\sK,\phi|\psi)$ if it is desirable to emphasize the dependence on the measure $\mu$.
\end{defn}

Before moving on, it is worthwhile to record some inequalities relating the entropy of direct products to the entropies of their direct factors. To be precise if $\bX=(T,X,\mu,\phi)$, $\bY=(S,Y,\nu,\psi)$ are two approximate processes over $G$ then their direct product is the process $\bX\times \bY:=(T\times S,X\times Y,\mu\times \nu,\phi\times \psi)$ where 
$$(T\times S)_g(x,y):=(T_gx,S_gy)~\forall g\in G, (x,y)\in X\times Y.$$

Let $\pi_B:A \times B \to B$ and $\pi_A:A \times B \to A$ be the projection maps. To simplify, we let (for example)
$$\bh(\sK, \bX \times \bY| \bY):=\bh(\sK,\bX\times \bY | \pi_B \circ \bX\times \bY).$$

\begin{lem}\label{lem:bhproduct}
If $\bX$, $\bY$ are two processes over $G$ as above and $\sK$ is a random sofic approximation then
\begin{eqnarray*}
\bh(\sK,\bX\times \bY) &\le& \bh(\sK,\bX) + \bh(\sK,\bY)\\
\bh(\sK, \bX \times \bY| \bY) &\le &\bh(\sK,\bX).
\end{eqnarray*}
Similar statements hold with lower-sofic entropy in place of upper-sofic entropy.
\end{lem}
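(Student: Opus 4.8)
The plan is to unwind the definition of upper-sofic entropy and push every competing sequence for $\bX\times\bY$ down to competing sequences for $\bX$ and for $\bY$ via the coordinate projections. Let $\pi_A\colon A\times B\to A$ and $\pi_B\colon A\times B\to B$ be the projection maps. I would fix a sequence $\{\bZ_j\}_{j=1}^\infty$ of approximate processes with range $A\times B$, adapted to a subsequence $\sK'$ of $\sK$, with $\lim_{j\to\infty}\bZ_j=\bX\times\bY$; say $\bZ_j$ is constructed from a measure $\eta_j$ on $\sym(m_j)^G\times(A\times B)^{[m_j]}$ as in Definition~\ref{defn:eta}. By Definition~\ref{defn:composition}, $\pi_A\circ\bZ_j$ and $\pi_B\circ\bZ_j$ are (equivalent to) the approximate processes constructed from $(\tilde\pi_A)_*\eta_j$ and $(\tilde\pi_B)_*\eta_j$, where $\tilde\pi_A(\sigma,\xi)=(\sigma,\pi_A\circ\xi)$ and likewise for $\tilde\pi_B$. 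Since $\tilde\pi_A$ and $\tilde\pi_B$ fix the first coordinate, these pushforwards have first-factor marginal $\kappa_j$, so $\{\pi_A\circ\bZ_j\}$ and $\{\pi_B\circ\bZ_j\}$ are again adapted to $\sK'$; and by Lemma~\ref{lem:composition}, $\lim_{j\to\infty}\pi_A\circ\bZ_j=\pi_A\circ(\bX\times\bY)=\bX$ and $\lim_{j\to\infty}\pi_B\circ\bZ_j=\bY$.

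The heart of the matter is a pointwise Shannon-entropy estimate. Write $\eta_j=\int\nu_{j,\sigma}\,d\kappa_j(\sigma)$ for the disintegration over $\kappa_j$, and identify $(A\times B)^{[m_j]}$ with $A^{[m_j]}\times B^{[m_j]}$; then, with $\sigma$ fixed, the pushforward of $\nu_{j,\sigma}$ under $\tilde\pi_A$ (resp.\ $\tilde\pi_B$) is exactly the $A^{[m_j]}$-marginal (resp.\ $B^{[m_j]}$-marginal) of $\nu_{j,\sigma}$, and these are the fiber measures computing $h(\pi_A\circ\bZ_j)$ and $h(\pi_B\circ\bZ_j)$. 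Subadditivity of Shannon entropy gives $H(\nu_{j,\sigma})\le H((\tilde\pi_A)_*\nu_{j,\sigma})+H((\tilde\pi_B)_*\nu_{j,\sigma})$, and the bound of conditional entropy by unconditional entropy gives $H(\nu_{j,\sigma})-H((\tilde\pi_B)_*\nu_{j,\sigma})\le H((\tilde\pi_A)_*\nu_{j,\sigma})$. Integrating against $\kappa_j$ and dividing by $m_j$ yields, for every $j$,
\[
h(\bZ_j)\le h(\pi_A\circ\bZ_j)+h(\pi_B\circ\bZ_j),\qquad h(\bZ_j)-h(\pi_B\circ\bZ_j)\le h(\pi_A\circ\bZ_j).
\]

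It then remains to take $\limsup_{j\to\infty}$ and to use that, by the first paragraph, $\{\pi_A\circ\bZ_j\}$ and $\{\pi_B\circ\bZ_j\}$ are legal competitors in the suprema defining $\bh(\sK,\bX)$ and $\bh(\sK,\bY)$. The first inequality gives $\limsup_j h(\bZ_j)\le\limsup_j h(\pi_A\circ\bZ_j)+\limsup_j h(\pi_B\circ\bZ_j)\le\bh(\sK,\bX)+\bh(\sK,\bY)$, and taking the supremum over all admissible $\{\bZ_j\}$ proves $\bh(\sK,\bX\times\bY)\le\bh(\sK,\bX)+\bh(\sK,\bY)$. The second inequality gives $\limsup_j\big(h(\bZ_j)-h(\pi_B\circ\bZ_j)\big)\le\limsup_j h(\pi_A\circ\bZ_j)\le\bh(\sK,\bX)$, and taking the supremum proves $\bh(\sK,\bX\times\bY\mid\bY)\le\bh(\sK,\bX)$. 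The lower-sofic statements follow from the same computation with ``converges'' replaced throughout by ``converges strongly,'' invoking the ``strongly'' clause of Lemma~\ref{lem:composition} to see that the projected sequences converge strongly to $\bX$ and $\bY$. I expect no genuine obstacle; the only point needing care is the bookkeeping that the projected sequences remain adapted to the same subsequence and converge (strongly, in the lower-sofic case) to the correct limits, so that they may legitimately be used in the defining suprema.
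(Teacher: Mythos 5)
Your proposal is correct and follows essentially the same route as the paper: project any competing sequence $\{\bZ_j\}$ via $\pi_A$ and $\pi_B$, use Lemma \ref{lem:composition} to identify the limits, and apply subadditivity (and the conditional-entropy bound) of Shannon entropy fiberwise before integrating against $\kappa_j$. Your write-up merely spells out the disintegration and adaptedness bookkeeping that the paper leaves implicit.
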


\begin{proof}
Let $\bX=(T,X,\mu,\phi)$ and $\bY=(S,Y,\nu,\psi)$. Let $\{\bZ_i\}_{i=1}^\infty$ be a sequence of approximate processes adapted to $\sK$ so that $\lim_{i\to\infty} \bZ_i=\bX \times \bY$. Then $\{\pi_A\circ \bZ_i\}_{i=1}^\infty$ converges to $\bX$ and $\{\pi_B \circ \bZ_i\}_{i=1}^\infty$ converges to $\bY$. Moreover, 
$$h(\bZ_i) \le h( \pi_A \circ \bZ_i) + h(\pi_B\circ \bZ_i).$$
This and Lemma \ref{lem:composition} imply
\begin{eqnarray*}
\bh(\sK,\bX\times \bY) \le \bh(\sK,\bX) + \bh(\sK,\bY),&& ~\bh(\sK, \bX \times \bY| \bY) \le \bh(\sK,\bX).
\end{eqnarray*}
The proofs for lower-sofic entropy are similar. 
 \end{proof}
If $G$ is non-amenable, then there are examples showing that some of the inequalities of the lemma above can be strict. However, in the special case of Bernoulli actions, we have equality. To be precise, let $(\Omega,\omega)$ be a standard probability space. $G$ acts on the product space $(\Omega^G,\omega^G)$ by $T_gy(f)=y(g^{-1}f)~\forall y \in \Omega^G, g,f\in G$. Let $\phi:\Omega^G \to \Omega$ be the map $\phi(y)=y(e)$. The process $(T,\Omega^G,\omega^G,\phi)$ is the {\em Bernoulli process} over $G$ with base $(\Omega,\omega)$. 

\begin{lem}\label{lem:bhproduct2}
If $\bX$ is any finite-range process over $G$, $\bY$ is a Bernoulli process with base $(\Omega,\omega)$ (where $\Omega$ is finite) and $\sK$ is a random sofic approximation to $G$ then
\begin{eqnarray*}
\bh(\sK,\bX\times \bY) &=& \bh(\sK,\bX) + \bh(\sK,\bY)\\
\bh(\sK, \bX \times \bY| \bY) &= &\bh(\sK,\bX).
\end{eqnarray*}
Similar statements hold with lower-sofic entropy in place of upper-sofic entropy.
\end{lem}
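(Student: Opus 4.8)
The inequalities $\le$ in both identities are exactly Lemma \ref{lem:bhproduct}, so the content is the reverse inequalities, and by Corollary \ref{cor:bhinequality} the lower‑sofic upper bounds $h(\sK,\bX\times\bY)\le\bh(\sK,\bX\times\bY)$ etc.\ come for free once the upper‑sofic statements are in hand (for the lower‑sofic lower bounds one reruns the construction below with strong convergence). Write $\bX=(T,X,\mu,\phi)$ with $\phi:X\to A$, and let $\bY$ be the Bernoulli process with base $(\Omega,\omega)$; for any finite $W\subset G$ the $W$-local statistics of $\bY$ is the $|W|$-fold product $\omega^W$, and hence the $W$-local statistics of $\bX\times\bY$ is $(\phi^W_T)_*\mu\times\omega^W$. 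The plan is to take an approximating sequence for $\bX$ and ``tensor on'' an independent product copy of $\omega$.

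So let $\{\bX_i\}_{i=1}^\infty$ be adapted to a subsequence $\sK'$ of $\sK$, arising from measures $\eta_i$ on $\sym(m_i)^G\times A^{[m_i]}$ with disintegration $\eta_i=\int\nu_{i,\sigma}\,d\kappa_i(\sigma)$, and suppose $\lim_i\bX_i=\bX$. Let $\omega^{[m_i]}$ be the $m_i$-fold product of $\omega$ on $\Omega^{[m_i]}$, set $\theta_i:=\int(\nu_{i,\sigma}\times\omega^{[m_i]})\,d\kappa_i(\sigma)$ (a measure on $\sym(m_i)^G\times(A\times\Omega)^{[m_i]}$ with first marginal $\kappa_i$), and let $\bZ_i$ be the approximate process constructed from $\theta_i$. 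Since Shannon entropy is additive under products and $H(\omega^{[m_i]})=m_iH(\omega)$, we get $h(\bZ_i)=h(\bX_i)+H(\omega)$. Next, $\lim_i\bZ_i=\bX\times\bY$: by Definition \ref{defn:composition}, $\pi_A\circ\bZ_i$ is built from the pushforward of $\theta_i$, which is $\eta_i$, so $\pi_A\circ\bZ_i=\bX_i$ and its $W$-local statistics converge to $(\phi^W_T)_*\mu$. For the $\Omega$-coordinate fix finite $W$ and let $E_i\subset\sym(m_i)^G\times[m_i]$ be the event, under $\kappa_i\times u_{m_i}$, that $w\mapsto\sigma(w)p$ is injective on $W$; by the random sofic approximation property and a union bound over pairs in $W$, $(\kappa_i\times u_{m_i})(E_i^c)\to0$. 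Conditioned on $(\sigma,p)\in E_i$, the $\Omega$-labels at the $|W|$ distinct points $\sigma(w)p$ are i.i.d.\ $\omega$ and independent of the $A$-labels (given $\sigma$), so the joint law of $\bigl(w\mapsto\psi_A(\sigma(w)p),\,w\mapsto\zeta(\sigma(w)p)\bigr)$ is (some law on $A^W$)$\,\times\,\omega^W$; averaging over $E_i$, using $(\kappa_i\times u_{m_i})(E_i^c)\to0$ and $\pi_A\circ\bZ_i=\bX_i\to\bX$, the $W$-local statistics of $\bZ_i$ converge to $(\phi^W_T)_*\mu\times\omega^W$, i.e.\ to those of $\bX\times\bY$. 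The step I expect to be most delicate is precisely this decoupling — showing the empirical \emph{joint} type over $p$ converges to a product rather than only that each marginal does — where distinctness of the orbit points $\{\sigma(w)p\}_{w\in W}$ is the key; in the lower‑sofic setting it needs in addition a routine concentration estimate for $\zeta$ sampled from $\omega^{[m_i]}$.

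Granting this, take $\bX$ to have one-point range: then $h(\bX_i)=0$, $\bZ_i\to\bY$, and $h(\bZ_i)=H(\omega)$, so $\bh(\sK,\bY)\ge H(\omega)$. For the matching upper bound, let $\{\bY_i\}$ be any sequence adapted to a subsequence of $\sK$ with $\lim_i\bY_i=\bY$, built from $\eta_i=\int\nu_{i,\sigma}\,d\kappa_i(\sigma)$ on $\sym(m_i)^G\times\Omega^{[m_i]}$, and let $\nu_{i,\sigma}^{(p)}$ be the $p$-th coordinate marginal. Subadditivity of joint entropy and then concavity of $H$ give $\tfrac1{m_i}H(\nu_{i,\sigma})\le\tfrac1{m_i}\sum_pH(\nu_{i,\sigma}^{(p)})\le H\bigl(\tfrac1{m_i}\sum_p\nu_{i,\sigma}^{(p)}\bigr)$; integrating against $\kappa_i$, applying concavity once more, and noting $\int\tfrac1{m_i}\sum_p\nu_{i,\sigma}^{(p)}\,d\kappa_i$ is the $\{e\}$-local statistics of $\bY_i$ (which converges to $\omega$), continuity of $H$ on the finite simplex yields $\limsup_ih(\bY_i)\le H(\omega)$. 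Hence $\bh(\sK,\bY)=H(\omega)$. Now apply the construction to an arbitrary $\bX_i\to\bX$: $\bh(\sK,\bX\times\bY)\ge\limsup_ih(\bZ_i)=\limsup_ih(\bX_i)+H(\omega)$, and taking the supremum over such sequences gives $\bh(\sK,\bX\times\bY)\ge\bh(\sK,\bX)+H(\omega)=\bh(\sK,\bX)+\bh(\sK,\bY)$, which with Lemma \ref{lem:bhproduct} proves the first identity.

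For the relative identity, keep the $\bZ_i$ constructed from a sequence $\bX_i\to\bX$. By Definition \ref{defn:composition}, $\pi_B\circ\bZ_i$ is built from the pushforward of $\theta_i$ to $\sym(m_i)^G\times\Omega^{[m_i]}$, whose disintegration over $\kappa_i$ has every fiber equal to $\omega^{[m_i]}$; thus $h(\pi_B\circ\bZ_i)=H(\omega)$ and $h(\bZ_i)-h(\pi_B\circ\bZ_i)=h(\bX_i)$. Since $\lim_i\bZ_i=\bX\times\bY$, taking the supremum over approximating sequences for $\bX$ gives $\bh(\sK,\bX\times\bY|\bY)\ge\bh(\sK,\bX)$, and Lemma \ref{lem:bhproduct} gives the reverse, proving the second identity. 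The lower‑sofic analogues of both identities follow by running the same constructions with strong convergence in place of convergence, the only extra ingredient being the concentration estimate mentioned above ensuring $\lim_i\bZ_i=\bX\times\bY$ strongly whenever $\lim_i\bX_i=\bX$ strongly.
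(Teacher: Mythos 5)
Your proposal is correct and follows essentially the same route as the paper: tensor the approximating measures $\eta_i$ with $\omega^{[m_i]}$, use the injectivity of $w\mapsto\sigma(w)p$ on a set of measure tending to $1$ to show the joint local statistics split as a product (the paper conditions on its set $\cG_j(V)$ for the same purpose), compute $h(\bZ_i)=h(\bX_i)+H(\omega)$ and $h(\pi_B\circ\bZ_i)=H(\omega)$, prove $\bh(\sK,\bY)=H(\omega)$ by coordinate-wise subadditivity and concavity of $H$, and combine with Lemma \ref{lem:bhproduct}. The only differences are cosmetic, e.g.\ your explicit concentration remark for the strong-convergence (lower-sofic) case, which the paper dismisses as ``similar.''
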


\begin{proof}
The first statement above was proven in [Bo10a] for non-random sofic approximations and with lower-sofic entropy in place of upper-sofic entropy. We will handle here only the case of upper-sofic entropy as the other cases are similar. 

Let $\{\bX_j\}_{j=1}^\infty$ be a sequence of approximate processes adapted to $\sK'=\{\kappa_j\}_{j=1}^\infty$ (where $\sK'$ is a subsequence of $\sK$) such that $\lim_{j\to\infty} \bX_j = \bX$ and $\lim_{j\to\infty} h(\bX_j)=\bh(\sK,\bX)$. Let $\eta_j$ be the probability measure on $\sym(m_j)^G \times A^{[m_j]}$ from which $\bX_j$ is constructed (as in definition \ref{defn:eta}). Let $\teta_j = \eta_j \times \omega^{[m_j]}$ be a probability measure on $\sym(m_j)^G \times A^{[m_j]} \times \Omega^{[m_j]} = \sym(m_j)^G \times (A\times \Omega)^{[m_j]}$ and let $\bZ_j$ be the approximate process constructed from $\teta_j$. Note $h(\bZ_j) = h(\bX_j) + H(\omega)$.

We claim that  $\lim_{j\to\infty} \bZ_j = \bX\times \bY$. To see this, let $V \subset G$ be finite. Let $\cG_j(V)$ be the set of all $(\sigma,p) \in \sym(m_j)^G  \times [m_j]$ such that $\sigma(g)\sigma(h)p = \sigma(gh)p$ for all $g,h \in V$ and for every $g\ne h$ with $g,h\in V$, $\sigma(g)p \ne \sigma(h)p$. We consider $\cG_j(V) \times (A\times \Omega)^{[m_j]}$ as a subset of $\sym(m_j)^G \times (A\times \Omega)^{[m_j]} \times [m_j]$ in the obvious way. Then $\lim_{j\to\infty} \teta_j \times u_{m_j} (\cG_j(V) \times (A\times \Omega)^{[m_j]})=1$ because $\sK$ is a sofic approximation.

In general, if $\tau$ is a measure on a set $J$ and $J_0 \subset J$ then we write $\tau|J_0$ to denote $\tau$ restricted to $J_0$. Let $\chi_j:\sym(m_j)^G \times (A\times \Omega)^{[m_j]} \times [m_j] \to A\times \Omega$ be the projection map $\chi_j(\sigma, \xi,p) = \xi(p)$. Then the measure $(\chi^V_j)_*(\teta_j\times u_{m_j}|\cG_j(V) \times (A\times \Omega)^{[m_j]})$ splits as a product $\tau_j \times \omega^V$ for some measure $\tau_j$ on $A^V$. Let $\bX=(T,X,\mu,\psi)$. Thus,
\begin{eqnarray*}
\lim_{j\to\infty} d_V(\bZ_j,\bX\times \bY) &=& \lim_{j\to\infty} \frac{1}{2} \| (\chi^V_j)_*(\teta_j\times u_{m_j}) - (\psi\times \phi)^V_*\mu\times \omega^G\|_1\\
&=&\lim_{j\to\infty} \frac{1}{2} \| (\chi^V_j)_*(\teta_j\times u_{m_j}|\cG(V)\times (A\times \Omega)^{[m_j]}) - (\psi\times \phi)^V_*\mu\times \omega^G\|_1\\
&=&\lim_{j\to\infty} \frac{1}{2} \| (\chi^V_j)_*(\eta_j\times u_{m_j}|\cG(V) \times A^{[m_j]}) - \psi^V_*\mu\|_1\\
&=&\lim_{j\to\infty}   d_V(\bX_j,\bX) =0.
\end{eqnarray*} 
The first equality holds by definition of $d_V$. The second one holds because $\lim_{j\to\infty} \teta_j\times u_{m_j}(\cG_j(V) \times (A\times \Omega)^{[m_j]})=1$. The third equality holds because $(\chi^V_j)_*(\teta_j\times u_{m_j}|\cG(V)\times (A\times \Omega)^{[m_j]})$ splits as a product $\tau_j \times \omega^V$ and $(\psi\times \phi)^V_*\mu\times \omega^G$ splits as the product $\psi^V_*\mu \times \omega^V$.
 The last equality holds because $\lim_{j\to\infty} \eta_j \times u_{m_j} (\cG_j(V) \times A^{[m_j]})=1$ since $\sK$ is a sofic approximation.

 Since $\lim_{j\to\infty} \bZ_j = \bX\times \bY$, it follows that
\begin{eqnarray*}\label{oneinequality}
\bh(\sK,\bX\times \bY)  \ge \bh(\sK,\bX) +H(\omega), \quad \bh(\sK, \bX \times \bY| \bY) \ge \bh(\sK,\bX).
\end{eqnarray*}
 By the previous lemma, it now suffices to prove that $\bh(\sK,\bY) = H(\omega)$.

Applying the equation above to the case when $\bX$ is trivial, we see that $\bh(\sK,\bY) \ge H(\omega)$. Suppose that $\{\bY_j\}_{j=1}^\infty$ is a sequence of approximate processes adapted to $\sK'=\{\kappa_j\}_{j=1}^\infty$ (where $\sK'$ is a subsequence of $\sK$) such that $\lim_{j\to\infty} \bY_j = \bY$ and $\lim_{j\to\infty} h(\bY_j)=\bh(\sK,\bY)$. Let $\eta_j$ be the probability measure on $\sym(m_j)^G \times \Omega^{[m_j]}$ from which $\bY_j$ is constructed. Let $\pi:\sym(m_j)^G \times \Omega^{[m_j]} \times [m_j] \to \Omega$ be the map $\pi(\sigma,\xi,p) = \xi(p)$ and let $\omega_j = \pi_*(\eta_j \times u_{[m_j]})$ be the pushforward measure. Because $\lim_{j\to\infty} \bY_j = \bY$, $\omega_j$ converges to $\omega$. 

We claim that $h(\bY_j) \le H(\omega_j)$.  Let $\eta_{j,\sigma}$ be the fiber measure of $\eta_j$ over $\sigma \in \sym(m_j)^G$. This is a measure on $\Omega^{[m_j]}$. By abuse of notation we let $\pi:\Omega^{[m_j]} \times [m_j] \to \Omega$ denote the map $\pi(\xi,p) = \xi(p)$ and for $p\in [m_j]$ let $\pi_p:\Omega^{[m_j]} \to \Omega$ denote the map $\pi(\xi) = \xi(p)$.

By concavity of the entropy function,
\begin{eqnarray*}
 \frac{1}{m_j} H(\eta_{j,\sigma}) & \le& \frac{1}{m_j} \sum_{p \in [m_j]} H\left( (\pi_p)_*(\eta_{j,\sigma} ) \right)\\
 &\le& H\left( \frac{1}{m_j} \sum_{p \in [m_j]} (\pi_p)_*(\eta_{j,\sigma} ) \right) = H\left( \pi_*(\eta_{j,\sigma} \times u_{[m_j]})\right).
 \end{eqnarray*}
So by concavity of entropy again,
\begin{eqnarray*}
h(\bY_j) &=& \frac{1}{m_j} H(\eta_j|\kappa_j) =\frac{1}{m_j}  \int H(\eta_{j,\sigma}) ~d\kappa_j(\sigma)\\
&\le &  \int H(\pi_*(\eta_{j,\sigma}\times u_{[m_j]} )) ~d\kappa_j(\sigma) \le H\left( \int \pi_*(\eta_{j,\sigma}\times u_{[m_j]}) ~d\kappa_j(\sigma) \right) = H(\omega_j).
\end{eqnarray*}
So
$$\bh(\sK,\bY)= \lim_{j\to\infty} h(\bY_j) \le \limsup_{j\to\infty} H(\omega_j) =H(\omega). $$
Since we have already shown that $\bh(\sK,\bY) \ge H(\omega)$, we now know that $\bh(\sK,\bY) = H(\omega)$.



\end{proof}

\begin{lem}\label{lem:bhproduct3}
If $\bX$ is any finite-range process over $G$, $\bY=(T,\Omega^G,\omega^G,\phi)$ is a Bernoulli process with base $(\Omega,\omega)$ (where $\Omega$ is finite), $\Omega'$ is a finite set, $\phi':\Omega^G \to \Omega'$, $\beta:\Omega' \to \Omega$ are Borel maps such that $\phi=\beta \circ \phi'$,  $\bY'=(T,\Omega^G,\omega^G,\phi')$ and $\sK$ is a random sofic approximation to $G$ then
\begin{eqnarray*}
h(\sK, \bX \times \bY'| \bY') = h(\sK,\bX), \quad \bh(\sK, \bX \times \bY'| \bY') = \bh(\sK,\bX).
\end{eqnarray*}
\end{lem}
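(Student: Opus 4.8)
The plan is to derive both identities from the easy inequality ``$\le$'' (immediate from earlier results) together with the reverse inequality ``$\ge$'', which I would prove by an explicit construction generalizing the proof of Lemma~\ref{lem:bhproduct2}. For the easy direction, note that $\bY'=(T,\Omega^G,\omega^G,\phi')$ is a genuine process over $G$, so Lemma~\ref{lem:bhproduct} applied with $\bY'$ in the role of $\bY$ gives at once $h(\sK,\bX\times\bY'\,|\,\bY')\le h(\sK,\bX)$ and $\bh(\sK,\bX\times\bY'\,|\,\bY')\le\bh(\sK,\bX)$. It then remains to prove the two reverse inequalities; let $\phi_X:X\to A$ denote the observable of $\bX$.

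The first substantive step is to reduce $\phi'$ to a finite block code. Since $\phi(y)=y(e)$ generates and $\phi=\beta\circ\phi'$, the observable $\phi'$ also generates, so it is measurable with respect to the coordinate $\sigma$-algebra of $\Omega^G$; hence for each $n$ there is a finite $F_n\subset G$ with $e\in F_n$, $\bigcup_n F_n=G$, and a map $c_n:\Omega^{F_n}\to\Omega'$ so that, writing $\psi'_n(y):=c_n(y|_{F_n})$, we have $\omega^G(\{\psi'_n\ne\phi'\})\le 1/n$; redefining $c_n$ on the (asymptotically null) bad values one may also arrange $\beta\circ\psi'_n$ to equal the $e$-coordinate. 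With $\bY'_n:=(T,\Omega^G,\omega^G,\psi'_n)$ one gets $d_W(\bY'_n,\bY')\le|W|/n$ for every finite $W\subset G$, so $\bY'_n\to\bY'$ and $\bX\times\bY'_n\to\bX\times\bY'$.

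Next comes the construction. Using the characterization of $h(\sK,\bX)$ (resp.\ $\bh(\sK,\bX)$) via strongly convergent (resp.\ convergent) adapted approximate processes, fix $\{\bX_j\}_j$ adapted to a subsequence $\sK'=\{\kappa_j\}_j$ of $\sK$, with $\bX_j\to\bX$ strongly (resp.\ $\bX_j\to\bX$) and, after passing to a further subsequence, $h(\bX_j)\to h(\sK,\bX)$ (resp.\ $\to\bh(\sK,\bX)$). Let $\eta_j=\int\eta_{j,\sigma}\,d\kappa_j(\sigma)$ on $\sym(m_j)^G\times A^{[m_j]}$ define $\bX_j$. For an integer $n$ and $\sigma\in\sym(m_j)^G$, let $\lambda_{j,n,\sigma}$ be the law on $\Omega'^{[m_j]}$ of the coloring $\zeta'$ obtained from an i.i.d.-$\omega$ coloring $\zeta\in\Omega^{[m_j]}$ by pulling $\zeta$ back through $\sigma$ over the window $F_n$ and applying $c_n$ pointwise (for $n=0$, $F_0=\{e\}$, this is just $\omega^{[m_j]}$, exactly as in Lemma~\ref{lem:bhproduct2}). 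Put
$$\teta_j:=\int \big(\eta_{j,\sigma}\times\lambda_{j,n_j,\sigma}\big)\,d\kappa_j(\sigma)\ \text{ on }\ \sym(m_j)^G\times(A\times\Omega')^{[m_j]},$$
with $n_j\to\infty$ to be chosen below, and let $\bZ_j$ be the approximate process built from $\teta_j$; it is adapted to $\sK'$. Since each fiber $\teta_{j,\sigma}$ is a product with $\Omega'$-marginal $\lambda_{j,n_j,\sigma}$, additivity of Shannon entropy on products yields
$$h(\bZ_j)-h(\pi_{\Omega'}\circ\bZ_j)=\frac{1}{m_j}\int H(\eta_{j,\sigma})\,d\kappa_j(\sigma)=h(\bX_j),$$
where $\pi_{\Omega'}:A\times\Omega'\to\Omega'$ is the projection (so $\pi_{\Omega'}\circ(\bX\times\bY')=\bY'$). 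Thus everything reduces to choosing $n_j\to\infty$ so that $\bZ_j\to\bX\times\bY'$ (strongly, in the lower-sofic case): then $\limsup_j[h(\bZ_j)-h(\pi_{\Omega'}\circ\bZ_j)]=h(\sK,\bX)$ (resp.\ $\bh(\sK,\bX)$) along an adapted, (strongly) convergent sequence, which gives the two reverse inequalities.

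For the convergence: the $A$-marginal of $\bZ_j$ is $\bX_j\to\bX$; for a fixed finite window $V$ and a good sofic approximation $\sigma$ (a $\kappa_j$-fraction $1-o_j(1)$), at the $1-o_j(1)$-fraction of $p\in[m_j]$ at which the relevant $\sigma$-products behave as in $G$, the conditional $V$-window distribution of $\zeta'$ equals the $V$-window statistics of $\bY'_{n_j}$ exactly, and these converge to those of $\bY'$ as $n_j\to\infty$; moreover, at such $p$ this distribution of the $\Omega'$-coloring neither depends on $p$ nor on the conditionally independent $A$-coloring $\xi$, so the joint $V$-window empirical distribution of $\bZ_j$ factorizes in the limit into the product of the $\mu_X$-type and $\bY'$-type parts, i.e.\ $d_V(\bZ_j,\bX\times\bY')\to0$; a diagonal choice of $n_j\to\infty$ handles all $V$ simultaneously. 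In the lower-sofic case one additionally needs, for most individual pairs $(\xi,\zeta')$, that the single-sample joint $V$-window empirical distribution be close to this product: with $\sigma$ and $\xi$ held fixed and good, $\zeta'$ is a bounded-radius deterministic function of the i.i.d.\ coloring $\zeta$, so a bounded-differences (Azuma/McDiarmid) concentration inequality shows this empirical distribution concentrates around its mean — within $o_j(1)+|V|/n_j$ of the product — with probability in $\zeta$ tending to $1$; combined with the strong convergence of $\bX_j$ and Fubini this gives $\bZ_j\to\bX\times\bY'$ strongly. I expect this last point — obtaining the \emph{product} structure of the joint local statistics (not merely correct marginals), and upgrading it to hold sample-by-sample in the lower-sofic case — to be the main obstacle; the reduction to block codes and the entropy computation are routine once it is in place.
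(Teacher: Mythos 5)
Your argument is correct, but it reaches the key inequality $\ge$ by a genuinely different route than the paper. The paper keeps the i.i.d.\ $\Omega$-coloring itself as the auxiliary coordinate: it forms $\eta_j\times\omega^{[m_j]}$, passes to the $V$-block coloring $\zeta[\sigma,V]$, and recodes it by maps $\beta':\Omega^{V}\to\Omega'$ with $\beta\beta'$ equal to evaluation at $e$; injectivity of $(\sigma,\xi,\zeta)\mapsto(\sigma,\xi,\beta'\zeta[\sigma,V])$ (which forces a preliminary normalization making $\sigma(e)$ the identity, via Theorem~\ref{thm:asymptotic2r}) keeps entropies unchanged, and the relative quantity is then read off as $h(\bZ_j)-H(\omega)$ using Lemma~\ref{lem:bhproduct2}. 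You instead make the $\Omega'$-coloring itself the second coordinate --- the block code $c_{n_j}$ applied through $\sigma$ to an i.i.d.\ coloring --- and take the fiberwise product with $\eta_{j,\sigma}$; then $h(\bZ_j)-h(\pi_{\Omega'}\circ\bZ_j)=h(\bX_j)$ is an exact identity from additivity of Shannon entropy on product fibers, with no injectivity device, no normalization of $\sigma(e)$, and no appeal to Lemma~\ref{lem:bhproduct2}. What you pay for this is having to verify directly that the joint $V$-window statistics of $\bZ_j$ converge to the product of the $V$-local statistics of $\bX$ and of $\bY'$, and, for lower-sofic entropy, that this holds sample-by-sample; your bounded-differences argument (Lipschitz constant of order $|V||F_{n_j}|/m_j$ per $\zeta$-coordinate, so $n_j$ must grow slowly in the diagonalization) does exactly this, and is in fact more explicit than the paper, which gets the analogous convergence from the averaged computation in Lemma~\ref{lem:bhproduct2} and dismisses the strong case as ``similar.'' Two minor points: the requirement that $\beta\circ\psi'_n$ equal the $e$-coordinate, which is essential in the paper for the injectivity step, is not load-bearing in your argument (and can only be arranged exactly on the support of $\omega$), so you may simply drop it; and when pulling $\zeta$ back through $\sigma$ you should code in terms of the orbit coordinates $\phi(T_f\cdot)$, $f\in F_n$ (equivalently relabel the window by $F_n^{-1}$), so that the simulated windows match the local statistics of $\bY'_{n}$ under the paper's convention $\zeta[\sigma,W](p)(w)=\zeta(\sigma(w)p)$.
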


\begin{proof}
We will only prove the statement for upper sofic entropy as the statement for lower sofic entropy is similar. By Lemma \ref{lem:bhproduct}, it suffices to prove $\bh(\sK, \bX \times \bY'| \bY') \ge \bh(\sK,\bX)$. Let $\sK=\{\kappa_i\}_{i=1}^\infty$ where $\kappa_i$ is a measure on $\sym(m_i)^G$. Let $F:\sym(m_i)^G \to \sym(m_i)^G$ be the map $F(\sigma)(g)=\sigma(g)$ if $g \ne e$ and $F(\sigma)(e)$ is the identity permutation. Let $\kappa'_i:=F_*\kappa_i$. Observe that $\sK':=\{\kappa'_i\}_{i=1}^\infty$ is asymptotic to $\sK$. Therefore, by Theorem \ref{thm:asymptotic2r}, we may assume without loss of generality that $\sK=\sK'$.

Define $\bX_j, \eta_j,\kappa_j$ (for $j=1,2,\ldots$) as in the proof of the previous lemma. For $V \subset G$ finite with $e\in V$, $\sigma \in \sym(m_j)^G$ and $\zeta \in \Omega^{[m_j]}$, define $\zeta[\sigma,V] \in (\Omega^V)^{[m_j]}$ by
$$\zeta[\sigma,V](p) := [v \in V \mapsto \zeta(\sigma(v)p)].$$
Let $\teta_j^V$, $\eta_j^V$ be the measures on $\sym(m_j)^G \times A^{[m_j]} \times (\Omega^V)^{[m_j]}$ obtained by pushing $\eta_j\times \omega^{[m_j]}$ forward under the maps
$$(\sigma,\xi,\zeta) \mapsto (\sigma,\xi,\zeta[\sigma,V]),\quad (\sigma,\xi,\zeta) \mapsto (\sigma,\zeta[\sigma,V])$$
respectively. Let $\bY_j^V, \bZ^V_j$ be the approximate process constructed from $\eta_j^V, \teta_j^V$ respectively. Also let $\bY_j = \bY_j^{\{e\}}$. As in the previous lemma we obtain
$$\lim_{j\to\infty} \bZ^V_j = \bX\times \bY^V, \quad \lim_{j\to\infty} \bY^V_j = \bY^V$$
where $\bY^V=(T,\Omega^G, \omega^G, \phi^V)$. 

Suppose that $\beta':\Omega^V \to \Omega'$ is a map such that $\beta\beta':\Omega^V \to \Omega$ is the map at the identity element. Let $id$ denote the identity map on $A$. Then 
$$\lim_{j\to \infty}  id\times \beta' \circ \bZ^V_j = \bX \times ( \beta' \circ \bY^V )$$
and because the map $(\sigma,\xi,\zeta) \mapsto (\sigma, \xi, \beta'\zeta[\sigma,V])$ is injective, $h(id\times \beta' \circ \bZ^V_j ) = h(\bZ_j)$ which implies (by the previous lemma)
$$\lim_{j\to\infty} h(id\times \beta' \circ \bZ^V_j ) = h(\sK,\bX \times \bY) = h(\sK,\bX) + H(\omega).$$
Also the map $(\sigma,\zeta) \mapsto (\sigma,  \beta'\zeta[\sigma,V])$ is injective. So $h(\beta' \circ \bY^V_j) =h(\bY_j)$. By definition, $\bY_j$ is the approximate process constructed from $\kappa_j \times \omega^{[m_j]}$. So $h(\bY_j)=H(\omega)$.

Because $\phi=\beta \circ \phi'$, there exist a sequence $\{V_k\}_{k=1}^\infty$ of finite subsets of $G$ (with $e \in V_k$ for all $k$) and a sequence of maps $\beta'_k:\Omega^{V_k} \to \Omega'$ such that $\beta\beta'_k:\Omega^{V_k} \to \Omega$ is the evaluation map at the identity element and $\beta'_k\phi^{V_k}$ limits on $\phi'$ in the following sense: 
$$\lim_{k\to\infty} \mu\left( \{ x\in X:~ \beta'_k\phi^{V_k}(x) = \phi'(x)\} \right) = 1.$$
Note that
\begin{eqnarray*}
\lim_{k\to\infty}\lim_{j\to\infty} id \times \beta'_k \circ \bZ^{V_k}_j  &=& \lim_{k\to\infty} id \times \beta'_k \circ \bX\times \bY^{V_k} = \bX\times \bY' \\
\lim_{k\to\infty} \lim_{j\to\infty} h( id \times \beta'_k \circ \bZ^{V_k}_j ) &=&h(\sK,\bX) + H(\omega).
\end{eqnarray*}
So a diagonalization argument implies that, without loss of generality we may assume $\{V_k\}_{k=1}^\infty$ and $\{\beta'_k\}_{k=1}^\infty$ are chosen so that there is an increasing sequence $\{i(j)\}_{j=1}^\infty$ of positive integers with
\begin{eqnarray*}
\lim_{j\to\infty}  id \times \beta'_{i(j)} \circ\bZ^{V_{i(j)}}_{i(j)} &=& \bX\times \bY' \\
\lim_{j\to\infty} h( id \times \beta'_{i(j)} \circ \bZ^{V_{i(j)}}_{i(j)} ) &=&h(\sK,\bX) + H(\omega).
\end{eqnarray*}
Let $\pi:A \times \Omega' \to \Omega'$ be the projection map. Note $ \beta'_{i(j)} \circ \bY^{V_{i(j)}}_{i(j)} = \pi \circ (id \times \beta'_{i(j)}) \circ \bZ^{V_{i(j)}}_{i(j)}$. So $h( \pi \circ (id \times \beta'_{i(j)}) \circ \bZ^{V_{i(j)}}_{i(j)}) = H(\omega)$. Therefore,
$$\lim_{j\to\infty} h( id \times \beta'_{i(j)} \circ \bZ^{V_{i(j)}}_{i(j)} )  - h( \pi \circ (id \times \beta'_{i(j)}) \circ \bZ^{V_{i(j)}}_{i(j)})  = h(\sK,\bX).$$
Thus $h(\sK, \bX \times \bY'| \bY') \ge h(\sK,\bX)$ as required.

\end{proof}

Next, we extend Theorem \ref{thm:asymptotic2} to relative entropy:
\begin{thm}\label{thm:asymptotic2r}
Let $\bX$ be a process over a group $G$ with random sofic approximations $\sK$ and $\sL$. We assume $A$ is the range of $\bX$ and $\beta:A \to B$ is a map (both $A$ and $B$ are finite). If $\sK$ and $\sL$ are asymptotic then $\bh(\sK,\bX|\bX \circ \beta)=\bh(\sL,\bX|\bX \circ \beta)$ and $h(\sK,\bX|\bX \circ \beta) = h(\sL,\bX|\bX \circ \beta)$.
\end{thm}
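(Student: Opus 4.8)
The plan is to follow the same strategy that proves Theorem \ref{thm:asymptotic2}, only now carrying an extra observable $\beta$ along for the ride. By symmetry it suffices to prove that $\bh(\sK,\bX|\bX\circ\beta)\le\bh(\sL,\bX|\bX\circ\beta)$, and likewise for the strong-convergence (lower) version; the two arguments are the same since relative entropy $h(\bX_j)-h(\beta\circ\bX_j)$ is what we track in both cases. So fix a sequence $\{\bX_j\}_{j=1}^\infty$ of approximate processes adapted to a subsequence $\sK'=\{\kappa_{i_j}\}_{j=1}^\infty$ of $\sK$ with $\lim_{j\to\infty}\bX_j=\bX$ (or $\to\bX$ strongly) and $\lim_{j\to\infty} h(\bX_j)-h(\beta\circ\bX_j)$ attaining the supremum $\bh(\sK,\bX|\bX\circ\beta)$. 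Say $\bX_j$ is constructed from a measure $\eta_j$ on $\sym(m_{i_j})^G\times A^{[m_{i_j}]}$ with first marginal $\kappa_{i_j}$.

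First I would use the definition of "asymptotic" to transport $\eta_j$ across to the $\sL$-side. Because $\sK$ and $\sL$ are asymptotic, for each finite $W$ and $\epsilon>0$ there are, for large $j$, couplings $\vartheta_j$ of $\kappa_{i_j}$ and $\lambda_{i_j}$ concentrated (up to mass $\ge 1-\epsilon$) on the set $\cG(W,\epsilon)$ of $(W,\epsilon)$-close pairs $(\sigma,\sigma')$, witnessed by partial bijections $\beta_{\sigma,\sigma'}:Q\to Q'$ with $|Q|\ge(1-\epsilon)m_{i_j}$, $|Q'|\ge(1-\epsilon)m_{i_j}'$. Given $(\sigma,\psi)$ with $\psi\in A^{[m_{i_j}]}$ and a close partner $\sigma'$, I would push $\psi$ forward along $\beta_{\sigma,\sigma'}$ on $Q'$ and fill in the (few) remaining coordinates of $[m_{i_j}']$ arbitrarily — say by a fixed element $a_0\in A$ — to obtain $\psi'\in A^{[m_{i_j}']}$. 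Integrating this assignment against $\vartheta_j$ (and the uniform "fill-in" choices, though a fixed choice suffices) produces a measure $\eta_j'$ on $\sym(m_{i_j}')^G\times A^{[m_{i_j}']}$ with first marginal $\lambda_{i_j}$, hence an approximate process $\bX_j'$ adapted to the corresponding subsequence $\sL'$ of $\sL$.

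The two things to check are (i) $\lim_{j\to\infty}\bX_j'=\bX$ in $\cP(G,A)$ — equivalently $\lim_j d_V(\bX_j',\bX_j)=0$ for every finite $V$ — and strong convergence when we started with strong convergence; and (ii) the relative entropy is essentially preserved: $\lim_j |(h(\bX_j')-h(\beta\circ\bX_j'))-(h(\bX_j)-h(\beta\circ\bX_j))|=0$. For (i): taking $W\supset VV$ (so that the $(W,\epsilon)$-closeness conditions force $\sigma'(v)\beta(q)=\beta(\sigma(v)q)$ for $v\in V$ on the bulk of $Q$), the $V$-local statistics of $\bX_j'$ computed over the good points of $Q'$ agree exactly with those of $\bX_j$ computed over the corresponding good points of $Q$, and the contribution of the bad points is $O(\epsilon)$ in total variation; since $W$ and $\epsilon$ are arbitrary and the construction is uniform, $d_V(\bX_j',\bX_j)\to 0$. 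The strong-convergence case is the same estimate applied to the sets $\{d_W((\sigma,\psi),\phi)<\epsilon\}$. For (ii): this is where I expect the real work. Since $\eta_j' $ is, fiberwise over $\sigma'$, (close to) a pushforward of the fibers of $\eta_j$ under an injective-on-the-bulk map together with a deterministic fill-in, $H(\nu_{j,\sigma'}')$ differs from the relevant $H(\nu_{j,\sigma})$ by at most the entropy lost/gained on the $\le\epsilon m$ bad coordinates, which is $O(\epsilon m\log|A|)$; dividing by $m_{i_j}'$ (comparable to $m_{i_j}$ within a factor $(1-\epsilon)^{\pm1}$ by the closeness condition) this is $O(\epsilon\log|A|)\to 0$ as $\epsilon\to 0$. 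Crucially the \emph{same} fiberwise map and fill-in apply to $\beta\circ\bX_j$ and $\beta\circ\bX_j'$, because $\beta$ is applied coordinatewise and commutes with both the $\beta_{\sigma,\sigma'}$-transport and the (fixed) fill-in after replacing $a_0$ by $\beta(a_0)$; so the error bounds for $h(\bX_j)$ vs $h(\bX_j')$ and for $h(\beta\circ\bX_j)$ vs $h(\beta\circ\bX_j')$ are of the same $O(\epsilon\log|A|)$ size and the \emph{difference} of the two entropies is preserved up to $O(\epsilon\log|A|)$.

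Carrying this out honestly requires the usual diagonalization: the construction depends on the pair $(W,\epsilon)$, so one runs it along a sequence $(W_k,\epsilon_k)$ with $W_k\uparrow G$, $\epsilon_k\downarrow 0$, extracting for each $k$ a tail of $j$'s on which all estimates hold to within $\epsilon_k$, and then passes to a diagonal subsequence; this yields $\{\bX_j'\}$ adapted to a subsequence $\sL'$ of $\sL$ with $\bX_j'\to\bX$ (strongly if applicable) and $\limsup_j h(\bX_j')-h(\beta\circ\bX_j')\ge \bh(\sK,\bX|\bX\circ\beta)$. Taking the supremum over all competitor sequences on the $\sK$-side gives $\bh(\sK,\bX|\bX\circ\beta)\le\bh(\sL,\bX|\bX\circ\beta)$, and by symmetry equality; the identical bookkeeping with "strong" replacing plain convergence throughout gives $h(\sK,\bX|\bX\circ\beta)=h(\sL,\bX|\bX\circ\beta)$. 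The main obstacle, as indicated, is item (ii): making precise that the fiberwise Shannon entropies — and hence their \emph{differences}, which is the quantity of interest for relative entropy — change by only $o(m)$ under the transport-plus-fill-in map, keeping careful track that the bad-coordinate error is controlled simultaneously for $\bX_j$ and for $\beta\circ\bX_j$ by the same $\epsilon$, and that the normalizations $1/m_{i_j}$ and $1/m_{i_j}'$ are interchangeable up to $O(\epsilon)$.
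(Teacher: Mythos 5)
Your proposal is correct and follows essentially the same route as the paper: transport the adapted measures $\eta_j$ across the coupling witnessing $(W,\epsilon)$-closeness via the near-equivariant partial bijections, fill in the few uncovered coordinates with a fixed letter, bound the change in $W$-local statistics by $O(|W|\epsilon)$ and the change in relative fiber entropy by $O(\epsilon\log|A|)$ using concavity of entropy (the paper organizes this through a preliminary lemma that conjugates so the partial bijections become the identity on a common index set $[m'']$, plus a case analysis for the coupling mass off the good set), and then diagonalize over $(W_n,\epsilon_n)$ and conclude by symmetry. One small caveat: only the one-sided inequality $h(\bX_j'|\beta\circ\bX_j')\ge h(\bX_j|\beta\circ\bX_j)-O(\epsilon\log|A|)$ is available and is all that is needed (it is what the paper proves), since mixing of fibers across the coupling and the weakened conditioning on the restricted $\beta\circ\psi$ can increase the target-side fiber entropy, so your claim that the difference is preserved in both directions is an overstatement, though a harmless one for the argument.
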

In order to prove this, we will need a few lemmas. We say that elements $\sigma,\sigma' \in \sym(m)^G$ are {\em conjugate} if there exists an element $\tau \in \sym(m)$ such that for all $g\in G$, $\sigma(g) = \tau \sigma'(g)\tau^{-1}$. We say that probability measures $\kappa, \kappa'$ on $\sym(m)^G$ are conjugate if there exists a probability measure $\vartheta$ on $\sym(m)^G\times \sym(m)^G$ with marginals $\kappa$, $\kappa'$ such that $\vartheta$ is supported on the set of all conjugate pairs $(\sigma,\sigma') \in \sym(m)^G\times \sym(m)^G$.

\begin{lem}
Let $\kappa,\kappa'$ be conjugate probability measures on $\sym(m)^G$. Let $\eta$ be a probability measure on $\sym(m)^G\times A^{[m]}$ with projection $\kappa$ and let $\bX$ be the approximate process constructed from $\eta$. Let $\beta:A \to B$ be a map to a finite set $B$. Let $W \subset  G$ be finite.
Then there is an approximate process $\bX'$ constructed from a measure $\eta'$ on $\sym(m)^G\times A^{[m]}$ such that
\begin{enumerate}
\item the projection of $\eta'$ to $\sym(m)^G$ is $\kappa'$;
\item $d_W(\bX,\bX') =0$,
\item $h(\bX'|\beta \circ \bX') \ge h(\bX|\beta \circ \bX)$.
\end{enumerate}
Moreover, if $\bX=(T,X,\mu,\phi)$ is a process over $G$ then for any $\epsilon>0$,
\begin{eqnarray}\label{eqn:strong}
\eta'\Big(\big\{ (\sigma,\psi):~d_W( (\sigma,\psi), \phi) \le \epsilon \big\}\Big) = \eta\Big(\big\{ (\sigma,\psi):~d_W( (\sigma,\psi), \phi) \le \epsilon \big\}\Big).
\end{eqnarray}
\end{lem}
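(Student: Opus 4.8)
The plan is to transport $\eta$ through the coupling $\vartheta$ by precomposing observables with the conjugating permutations, and to read off the relative-entropy inequality from concavity of conditional entropy. First I would fix a linear order on the finite set $\sym(m)$ and, for a conjugate pair $(\sigma,\sigma')$, let $\tau(\sigma,\sigma')\in\sym(m)$ be the least $\tau$ with $\sigma(g)=\tau\sigma'(g)\tau^{-1}$ for all $g$; since for each fixed $\tau$ the set of pairs it conjugates is closed, $(\sigma,\sigma')\mapsto\tau(\sigma,\sigma')$ is a Borel map defined $\vartheta$-a.e. Disintegrate $\eta=\int\eta_\sigma\,d\kappa(\sigma)$ and $\vartheta=\int\vartheta_\sigma\,d\kappa(\sigma)$ over their common first marginal $\kappa$ (identifying each fiber $\eta_\sigma$ with a measure on $A^{[m]}$), form $\zeta:=\int\delta_\sigma\times\vartheta_\sigma\times\eta_\sigma\,d\kappa(\sigma)$ on $\sym(m)^G\times\sym(m)^G\times A^{[m]}$ — so that $\zeta$ has $(\sigma,\sigma')$-marginal $\vartheta$ and $(\sigma,\psi)$-marginal $\eta$ — and set $\Psi(\sigma,\sigma',\psi):=(\sigma',\psi\circ\tau(\sigma,\sigma'))$, $\eta':=\Psi_*\zeta$. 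Let $\bX'$ be the approximate process constructed from $\eta'$. Property (1) is then immediate: $\Psi$ fixes the $\sigma'$-coordinate and the $\sigma'$-marginal of $\zeta$ is $\kappa'$.

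For (2) and the ``moreover'' identity the single relevant computation is: if $\sigma(g)=\tau\sigma'(g)\tau^{-1}$ for all $g$ and $\psi':=\psi\circ\tau$, then for every $w$ and $p$ one has $\psi'(\sigma'(w)p)=\psi(\tau\sigma'(w)p)=\psi(\sigma(w)\tau p)$; since $p\mapsto\tau p$ is a bijection of $[m]$, the law of $[w\mapsto\psi'(\sigma'(w)p)]$ under $p\sim u_m$ equals the law of $[w\mapsto\psi(\sigma(w)p)]$ under $p\sim u_m$. Thus $(\sigma',\psi')$ has the same $W$-local statistics as $(\sigma,\psi)$ for every finite $W$, and in particular $d_W((\sigma',\psi'),\phi)=d_W((\sigma,\psi),\phi)$ when $\bX=(T,X,\mu,\phi)$ is a process. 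Integrating these facts against $\zeta$ and using that its $(\sigma,\psi)$-marginal is $\eta$ gives $d_W(\bX',\bX)=0$ for all finite $W$ (property (2)) and the identity \eqref{eqn:strong}.

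The substance is in (3). Disintegrate $\vartheta=\int\vartheta^{\sigma'}\,d\kappa'(\sigma')$ over its second marginal; then the fiber of $\eta'$ over $\sigma'$ is $\eta'_{\sigma'}=\int(\Phi_{\tau(\sigma,\sigma')})_*\eta_\sigma\,d\vartheta^{\sigma'}(\sigma)$, where $\Phi_\tau(\psi):=\psi\circ\tau$. Writing $\beta_*:A^{[m]}\to B^{[m]}$ for postcomposition with $\beta$, note that $\beta_*\circ\Phi_\tau=\Phi_\tau\circ\beta_*$, since $\tau$ permutes coordinates while $\beta$ acts on values; hence $\Phi_\tau$ is a measure-space isomorphism carrying the joint law of $(\xi,\beta_*\xi)$ under $\eta_\sigma$ to the one under $(\Phi_\tau)_*\eta_\sigma$, so $H((\Phi_\tau)_*\eta_\sigma)-H(\beta_*(\Phi_\tau)_*\eta_\sigma)=H(\eta_\sigma)-H(\beta_*\eta_\sigma)$. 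Conditional entropy $p\mapsto H(X\mid Y)[p]$ is a concave function of the joint law $p$ (by the standard auxiliary-variable argument, valid since $A^{[m]}$ and $B^{[m]}$ are finite), so viewing $\eta'_{\sigma'}$ as such a joint law of $(\xi,\beta_*\xi)$,
$$H(\eta'_{\sigma'})-H(\beta_*\eta'_{\sigma'})\ \ge\ \int\big(H(\eta_\sigma)-H(\beta_*\eta_\sigma)\big)\,d\vartheta^{\sigma'}(\sigma).$$
Integrating $d\kappa'(\sigma')$, dividing by $m$, and using that $\int\vartheta^{\sigma'}\,d\kappa'(\sigma')$ has first marginal $\kappa$ yields $h(\bX')-h(\beta\circ\bX')\ge h(\bX)-h(\beta\circ\bX)$, which is (3) once one recalls $h(\bX\mid\beta\circ\bX)=h(\bX)-h(\beta\circ\bX)$.

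I expect the routine-but-delicate points to be the Borel choice of $\tau$ and the bookkeeping with the two disintegrations of $\vartheta$ (over $\kappa$ for the construction, over $\kappa'$ for the entropy estimate); the genuine mathematical content is the single intertwining identity $\psi'(\sigma'(w)p)=\psi(\sigma(w)\tau p)$ together with concavity of conditional entropy, and once these are in place every claim of the lemma reduces to pushing the measures around.
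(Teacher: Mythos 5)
Your proposal is correct, and it takes a genuinely different route from the paper. The paper proves the lemma by a four-case reduction: first the case where $\kappa'$ (resp.\ $\kappa$) is a Dirac mass, where conjugacy forces the other measure to be supported on the finite orbit $\{\tau\sigma\tau^{-1}:\tau\in\sym(m)\}$ and the entropy inequality comes from concavity of $H$ applied to the fibers $\eta'_\gamma$ over $\gamma\in B^{[m]}$ (which are explicit convex combinations of the fibers $\eta_{i,\gamma}$); the general case is then obtained by disintegrating $\kappa,\kappa',\eta$ over the space of conjugacy classes and gluing. You instead transport $\eta$ in one step through the coupling $\vartheta$, using a Borel selection $(\sigma,\sigma')\mapsto\tau(\sigma,\sigma')$ of conjugators and the pushforward $(\sigma,\sigma',\psi)\mapsto(\sigma',\psi\circ\tau(\sigma,\sigma'))$; the intertwining identity $\psi'(\sigma'(w)p)=\psi(\sigma(w)\tau p)$ gives exact equality of $W$-local statistics atom by atom (hence (2) and \eqref{eqn:strong}, in fact for all finite $W$ simultaneously), and concavity of the map $\nu\mapsto H(\nu)-H(\beta_*\nu)$ (conditional entropy as a function of the joint law, which is the right tool here since concavity of $H$ alone pulls the two terms in opposite directions) applied to $\eta'_{\sigma'}=\int(\Phi_{\tau(\sigma,\sigma')})_*\eta_\sigma\,d\vartheta^{\sigma'}(\sigma)$ gives (3), after noting $H((\Phi_\tau)_*\nu)-H(\beta_*(\Phi_\tau)_*\nu)=H(\nu)-H(\beta_*\nu)$ and $\int\vartheta^{\sigma'}\,d\kappa'(\sigma')=\vartheta$. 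Your identification $h(\bX\mid\beta\circ\bX)=h(\bX)-h(\beta\circ\bX)$ matches the fiberwise formula $\frac1m\int H(\eta_{\sigma,\beta\psi})\,d\eta$ used in the paper via the chain rule, so the quantities agree. What each approach buys: the paper's case analysis exploits that $\sym(m)$-conjugacy orbits in $\sym(m)^G$ are finite (of size at most $m!$), so no measurable selection is ever needed and the concavity step is completely elementary; your argument is shorter and more unified, at the modest cost of justifying the Borel choice of $\tau$ (easy, as you note, since each set $\{(\sigma,\sigma'):\sigma=\tau\sigma'\tau^{-1}\}$ is closed and $\vartheta$ is supported on conjugate pairs) and invoking concavity of conditional entropy together with Jensen's inequality for the barycenter $\eta'_{\sigma'}$.
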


\begin{proof}
{\bf Case 1}. Suppose $\kappa'$ is supported on a singleton $\{\sigma\}$. Then $\kappa$-a.e. $\sigma'$ is conjugate to $\sigma$, so $\kappa$ is supported on a finite set which we denote by $\{\sigma_1,\ldots,\sigma_n\}$. For each $i$ there is an element $\tau_i \in \sym(m)$ such that $\sigma = \tau_i \sigma_i \tau_i^{-1}$. Let $\eta'$ be the measure on $\sym(m)^G \times A^{[m]}$ defined by
$$\eta' \Big( \big\{ (\sigma,\psi) \big\}\Big) := \eta\Big( \big\{ (\sigma_i, \psi \circ \tau_i):~ 1\le i \le n\big\}\Big).$$
Then $\eta'$ projects to $\kappa'$ and 
$$\psi(\sigma(g) p ) = \psi \circ \tau_i \left(\sigma_i(g)\tau_i^{-1}p\right),\quad \forall p \in [m], \psi \in A^{[m]}, 1\le i\le n, g\in G$$
implies $d_W(\bX,\bX')=0$ where $\bX'$ is the approximate process constructed from $\eta'$. It also implies (\ref{eqn:strong}).

For $\gamma \in B^{[m]}$ with $\eta'( \{ (\sigma,\psi') :~ \beta \psi' = \gamma \})>0$, let $\eta'_\gamma$ be the measure on $A^{[m]}$ defined by
\begin{displaymath}
\eta'_\gamma(\{\psi\}) =\left\{ \begin{array}{ll}
 \frac{\eta'( \{ (\sigma,\psi) \}) }{ \eta'( \{ (\sigma,\psi') :~ \beta \psi' = \gamma \}) } & \textrm{ if } \beta\psi = \gamma\\
  0 & \textrm{ otherwise.} \end{array} \right.
  \end{displaymath}
Then
\begin{eqnarray*}
h(\bX'|\beta\circ \bX') &=& \frac{1}{m} \sum_{\gamma} H(\eta'_\gamma)  \eta'( \{ (\sigma,\psi') :~ \beta \psi' = \gamma \}).
\end{eqnarray*}
If  $\eta( \{ (\sigma_i,\psi' \tau_i) :~ \beta \psi' = \gamma \})>0$ then let $\eta_{i,\gamma}$ be the measure on $A^{[m]}$ defined by
\begin{displaymath}
\eta_{i,\gamma} (\{\psi\}) =\left\{ \begin{array}{ll}
 \frac{\eta( \{ (\sigma_i,\psi \tau_i) \}) }{ \eta( \{ (\sigma_i,\psi' \tau_i) :~ \beta \psi' = \gamma \}) } & \textrm{ if } \beta\psi = \gamma\\
  0 & \textrm{ otherwise.} \end{array} \right.
  \end{displaymath}
  Observe that
$$\eta'_\gamma = \frac{\sum_{i=1}^n  \eta( \{ (\sigma_i,\psi'\tau_i) :~ \beta \psi' = \gamma \}) \eta_{i,\gamma}}{ \eta'( \{ (\sigma,\psi') :~ \beta \psi' = \gamma \}) }.$$
By concavity of entropy,
\begin{eqnarray*}
H(\eta'_\gamma ) &\ge &  \frac{\sum_{i=1}^n  \eta( \{ (\sigma_i,\psi'\tau_i) :~ \beta \psi' = \gamma  \}) H(\eta_{i,\gamma })}{ \eta'( \{ (\sigma,\psi') :~ \beta \psi' = \gamma \}) }
\end{eqnarray*}
which implies
\begin{eqnarray*}
h(\bX'|\beta\circ \bX') &\ge & \frac{1}{m} \sum_{i=1}^n \sum_{\gamma} \eta( \{ (\sigma_i,\psi'\tau_i) :~ \beta \psi' = \gamma \}) H(\eta_{i,\gamma})\\
&=& \frac{1}{m}  \sum_{i=1}^n \sum_{\gamma} \eta( \{ (\sigma_i,\psi'\tau_i) :~ \beta \psi' = \gamma\tau_i^{-1} \}) H(\eta_{i,\gamma\tau_i^{-1}})= h(\bX|\beta \circ \bX).
\end{eqnarray*}
This proves Case 1.

{\bf Case 2}. Suppose $\kappa$ is supported on the singleton $\{\sigma\}$. Then $\kappa'$-a.e. $\sigma'$ is conjugate to $\sigma$, so $\kappa'$ is supported on a finite set which we will denote by $\{\sigma_1,\ldots,\sigma_n\}$. For each $i$ there is an element $\tau_i \in \sym(m)$ such that $\sigma_i = \tau_i \sigma \tau_i^{-1}$. Let $\eta'$ be the measure on $\sym(m)^G \times A^{[m]}$ defined by
$$\eta' ( \{ (\sigma_i,\psi) \}) := \kappa'(\{\sigma_i\}) \eta( \{ (\sigma, \psi \circ \tau_i):~ 1\le i \le n\}).$$
Then $\eta'$ projects to $\kappa'$ and 
$$\psi(\sigma_i(g) p ) = \psi \circ \tau_i (\sigma(g)\tau_i^{-1}p),\quad \forall p \in [m], \psi \in A^{[m]}, 1\le i\le n, g\in G$$
implies $d_W(\bX,\bX')=0$ where $\bX'$ is the approximate process constructed from $\eta'$. It also implies (\ref{eqn:strong}). If $\eta'_i$ is the probability measure obtained from $\eta'$ by restricting to $\{\sigma_i\} \times A^{[m]}$ and normalizing to have total mass $1$ and if $\bX'_i$ is the approximate process constructed from $\eta'_i$ then $\bX$ is isomorphic to $\bX'_i$ in the obvious sense. Thus $h(\bX|\beta\circ \bX) = h(\bX'_i|\beta\circ \bX'_i)$. So
\begin{eqnarray*}
h(\bX'|\beta \circ \bX') &=& \sum_{i=1}^n h(\bX'_i|\beta\circ \bX'_i) \kappa'(\{\sigma'_i\})= h(\bX|\beta\circ \bX).
\end{eqnarray*}
This proves Case 2.

{\bf Case 3}. Suppose that there is an element $\sigma \in \sym(m)^G$ such that $\kappa$-a.e. $\sigma'$ is conjugate to $\sigma$. Then $\kappa'$-a.e. $\sigma'$ is also conjugate to $\sigma$. By case 1, there exists a measure $\eta_0$ on $\sym(m)^G \times A^{[m]}$ whose first marginal is the Dirac measure supported on $\{\sigma\}$ such that if $\bX_0$ is the approximate process constructed from $\eta_0$ then $d_W(\bX,\bX_0) = 0$, equation (\ref{eqn:strong}) holds with $\eta'$ replaced by $\eta_0$ and $h(\bX_0|\beta\circ \bX_0) \ge h(\bX|\beta \circ \bX)$. By Case 2, there exists a measure $\eta'$ on $\sym(m)^G \times A^{[m]}$ whose first marginal is $\kappa'$ such that if $\bX'$ is the approximate process constructed from $\eta'$ then $d_W(\bX',\bX_0) = 0$, equation  (\ref{eqn:strong}) holds with $\eta$ replaced by $\eta_0$ and $h(\bX'|\beta\circ \bX') \ge h(\bX_0|\beta \circ \bX_0)$. So $d_W(\bX',\bX)=0$ and $h(\bX'|\beta\circ \bX') \ge h(\bX|\beta \circ \bX)$ and equation (\ref{eqn:strong}) holds.

{\bf Case 4}. Now we handle the general case. This case follows from the previous one by disintegrating $\kappa$ and $\kappa'$ over the set of conjugacy classes. To be precise, let $[\sym(m)^G]$ be the space of conjugacy classes of $\sym(m)^G$. This is the quotient of $\sym(m)^G$ by the conjugacy action of $\sym(m)$. Let $\pi:\sym(m)^G  \to [\sym(m)^G]$ be the quotient map. For each conjugacy class $c \in [\sym(m)^G]$, let $\kappa_c,\kappa'_c$ be the fiber measures of $\kappa,\kappa'$ over $c$ respectively. Likewise, let $\eta_c$ be the fiber measure of $\eta$ over $c$ (so $\kappa_c$ is the projection of $\eta_c$ to $\sym(m)^G$).

 By case 3, if $\bX_c$ is the approximate process constructed from $\eta_c$ then there is a measure $\eta'_c$ on $\sym(m)^G \times A^{[m]}$ such that the projection of $\eta'_c$ to $\sym(m)^G$ is $\kappa'_c$, if $\bX'_c$ is the approximate process constructed from $\eta'_c$ then $d_W(\bX_c,\bX'_c)=0$, equation (\ref{eqn:strong}) holds for $\eta_c$ and $\eta'_c$ in place of $\eta$ and $\eta'$, and $h(\bX'_c|\beta \circ \bX'_c) \ge h(\bX_c|\beta \circ \bX_c)$.
 
 The hypothesis that $\kappa$ and $\kappa'$ are conjugate implies $\pi_*\kappa = \pi_*\kappa'$. Let $\eta'=\int \eta'_c ~d\pi_*\kappa(c)$ and $\bX'$ be the approximate process constructed from $\eta'$. So,
 \begin{eqnarray*}
 h(\bX'|\beta \circ \bX') &=& \int h(\bX'_c|\beta \circ \bX'_c)~d\pi_*\kappa(c)\\
 &\ge&  \int h(\bX_c|\beta \circ \bX_c)~d\pi_*\kappa(c) = h(\bX|\beta \circ \bX).
 \end{eqnarray*}
 Because $d_W(\bX_c,\bX'_c)=0$, it follows that $d_W(\bX,\bX')=0$. Also equation (\ref{eqn:strong}) holds. This finishes the lemma. 

\end{proof}

\begin{lem}
Let $\kappa,\kappa'$ be probability measures on $\sym(m)^G, \sym(m')^G$ respectively. Let $\eta$ be a probability measure on $\sym(m)^G\times A^{[m]}$ with projection $\kappa$ and let $\bX$ be the approximate process constructed from $\eta$. Suppose $\kappa$ and $\kappa'$ are $(W,\epsilon)$-close where $W \subset G$ is a finite set containing the identity and $0\le \epsilon < 1$. Let $\beta:A \to B$ be a map to a finite set $B$.
Then there is an approximate process $\bX'$ constructed from a measure $\eta'$ on $\sym(m')^G\times A^{[m']}$ such that
\begin{enumerate}
\item the projection of $\eta'$ to $\sym(m')^G$ is $\kappa'$;
\item $d_W(\bX,\bX') \le 6|W|\epsilon$,
\item $h(\bX'|\beta \circ \bX') \ge (1-\epsilon)^2h(\bX|\beta\circ\bX) -2 (1-\epsilon)^2 \epsilon\log|A|$.
\end{enumerate}
Moreover, if $\bX=(T,X,\mu,\phi)$ is a process over $G$ then for any $\epsilon>0$,
\begin{eqnarray*}
\eta'\Big(\big\{ (\sigma,\psi):~d_W( (\sigma,\psi), \phi) \le (4|W|+1)\epsilon \big\}\Big)&\ge& (1-\epsilon)\eta\Big(\big\{ (\sigma,\psi):~d_W( (\sigma,\psi), \phi) \le \epsilon \big\}\Big) - \epsilon.
\end{eqnarray*}
\end{lem}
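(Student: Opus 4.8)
The plan is to \emph{transport} the labels of $\bX$ from $[m]$ to $[m']$ along the partial bijections witnessing $(W,\epsilon)$-closeness, and to fill in the leftover coordinates and leftover mass with a constant symbol. Fix $a_0\in A$ and a probability measure $\vartheta$ on $\sym(m)^G\times\sym(m')^G$ with marginals $\kappa,\kappa'$ and $\vartheta(\cG(W,\epsilon))\ge 1-\epsilon$. Membership of $(\sigma,\sigma')$ in $\cG(W,\epsilon)$ depends only on $(\sigma|_W,\sigma'|_W)$, which ranges over a finite set, so we may measurably assign to each $(\sigma,\sigma')\in\cG(W,\epsilon)$ sets $Q=Q(\sigma,\sigma')\subset[m]$, $Q'=Q'(\sigma,\sigma')\subset[m']$ with $|Q|=|Q'|$, $|Q|\ge(1-\epsilon)m$, $|Q'|\ge(1-\epsilon)m'$, and a bijection $\theta=\theta(\sigma,\sigma'):Q\to Q'$ satisfying conditions (1)--(2) of Definition~\ref{defn:close}. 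Disintegrate $\eta=\int\eta_\sigma\,d\kappa(\sigma)$ over its first marginal. For $(\sigma,\sigma')\in\cG(W,\epsilon)$ and $\psi\in A^{[m]}$ define $\psi'\in A^{[m']}$ by $\psi'(q')=\psi(\theta^{-1}q')$ for $q'\in Q'$ and $\psi'(q')=a_0$ otherwise. Let $\eta'_0$ be the pushforward of the (mass $\vartheta(\cG(W,\epsilon))$) measure $d\vartheta(\sigma,\sigma')\,d\eta_\sigma(\psi)$, restricted to $\cG(W,\epsilon)\times A^{[m]}$, under $(\sigma,\sigma',\psi)\mapsto(\sigma',\psi')$. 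Since the second marginal of $\vartheta$ restricted to $\cG(W,\epsilon)$ is dominated by $\kappa'$, the measure $\eta':=\eta'_0+\rho$, with $\rho$ the product of $\kappa'$ minus that second marginal and the Dirac mass at the constant function $a_0\in A^{[m']}$, is a probability measure on $\sym(m')^G\times A^{[m']}$ with first marginal exactly $\kappa'$. Let $\bX'$ be the approximate process constructed from $\eta'$. This gives property (1).

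\textbf{Total variation (property 2) and the ``moreover'' part.} Call $p\in[m]$ \emph{good for} $(\sigma,\sigma')$ if $p\in Q$ and $\sigma(w)p\in Q$ for all $w\in W$; since $|[m]\setminus Q|\le\epsilon m$ and each $\sigma(w)$ is a bijection, there are at least $(1-(|W|+1)\epsilon)m$ such $p$. Condition (1) of Definition~\ref{defn:close} shows that $\theta$ carries good points to good points and that $\psi'(\sigma'(w)\theta(p))=\psi(\sigma(w)p)$ for every $w\in W$ and every good $p$, i.e.\ the $W$-pattern of $\psi'$ at $\theta(p)$ equals that of $\psi$ at $p$. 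Coupling $\eta\times u_m$ and $\eta'\times u_{m'}$ through $\vartheta$ and $\theta$ on good points, one passes to a common sub-statistics on $A^W$ after discarding: the $\le\epsilon$ $\vartheta$-mass outside $\cG(W,\epsilon)$, the $\le(|W|+1)\epsilon$ fraction of non-good $p$, the mass $\|\rho\|\le\epsilon$, and a normalization error $|m/m'-1|\le 2\epsilon$ coming from $u_{m}$ versus $u_{m'}$ (here $1-\epsilon\le m/m'\le(1-\epsilon)^{-1}$ because $|Q|=|Q'|$); these add up to at most $6|W|\epsilon$ since $|W|\ge1$. The same pattern-transport, applied to a single close pair, shows that $d_W((\sigma,\psi),\phi)\le\epsilon$ together with $(\sigma,\sigma')\in\cG(W,\epsilon)$ forces $d_W((\sigma',\psi'),\phi)\le(4|W|+1)\epsilon$, and then, since the $(\sigma,\psi)$-marginal of $d\vartheta\,d\eta_\sigma$ restricted to $\cG(W,\epsilon)$ is $\eta$ minus a measure of mass $\le\epsilon$ and $\eta'\ge\eta'_0$, the displayed inequality follows (in fact with $1-\epsilon$ replaced by $1$, which is stronger than claimed).

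\textbf{Relative entropy (property 3).} Write $h(\bX'|\beta\circ\bX')=\tfrac1{m'}\int H(\eta'_{\sigma'}|\beta)\,d\kappa'(\sigma')$, where $\eta'_{\sigma'}$ is the fiber of $\eta'$ over $\sigma'$ and $H(\cdot|\beta)$ is conditional Shannon entropy relative to $\beta$ applied coordinatewise. Dropping the entropy-zero contribution of $\rho$ and using concavity of conditional entropy to pass from the mixture $\eta'_{0,\sigma'}$ (over the $\sigma$ coupled to $\sigma'$) to its components, the estimate reduces to: for each $(\sigma,\sigma')\in\cG(W,\epsilon)$, $H(\theta_*\eta_\sigma|\beta^{[m']})=H(\eta_\sigma|_{Q}|\beta^{Q})\ge H(\eta_\sigma|\beta^{[m]})-|[m]\setminus Q|\log|A|\ge H(\eta_\sigma|\beta^{[m]})-\epsilon m\log|A|$ (subadditivity plus the $a_0$-coordinates carrying no information). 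Integrating against $\vartheta|_{\cG(W,\epsilon)}$, using $\vartheta(\cG(W,\epsilon)^c)\le\epsilon$ and $H(\eta_\sigma|\beta^{[m]})\le m\log|A|$, gives $m'h(\bX'|\beta\circ\bX')\ge m\,h(\bX|\beta\circ\bX)-2\epsilon m\log|A|$; dividing by $m'$, and using $m/m'\ge1-\epsilon$ when $h(\bX|\beta\circ\bX)\ge 2\epsilon\log|A|$ (the complementary case being trivial since the right-hand side of (3) is then $\le0\le h(\bX'|\beta\circ\bX')$), yields exactly $h(\bX'|\beta\circ\bX')\ge(1-\epsilon)^2h(\bX|\beta\circ\bX)-2(1-\epsilon)^2\epsilon\log|A|$.

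\textbf{Main obstacle.} I expect the delicate point to be property (3): keeping the conditioning relative to $\beta$ correct while passing through the \emph{non-injective} transport $\psi\mapsto\psi'$ (which forgets the $\le\epsilon m$ coordinates outside $Q$) and through the disintegration over $\sym(m')^G$, all while arranging that the $\sym(m')^G$-marginal of $\eta'$ is \emph{exactly} $\kappa'$ (forcing the corrective term $\rho$, whose contribution must then be shown to be harmless). The total-variation bookkeeping in (2) and the ``moreover'' part are routine once the good points are isolated, but require the measurable selection of $(Q,Q',\theta)$ noted above.
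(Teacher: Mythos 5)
Your proposal is correct, and it reaches the same conclusion by the same underlying idea as the paper --- transport configurations along the partial bijections witnessing $(W,\epsilon)$-closeness, pad the uncovered coordinates with a fixed symbol, and absorb the $\le\epsilon$ mass of non-close pairs and the per-coordinate entropy loss $\le\epsilon\log|A|$ --- but it is organized rather differently. The paper first proves a separate lemma for \emph{conjugate} measures and uses it to normalize all the bijections $\beta_{\sigma,\sigma'}$ to be the identity on a common interval $[m'']$, then routes everything through an intermediate process $\bX''$ on $[m'']$ (restriction maps $R,R'$ and projections $\pi,\pi'$), extends by fibering over $\kappa'$, and finally splits into the case $\vartheta(\cG(W,\epsilon))=1$ versus the general case. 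You bypass all of that: a measurable selection of $(Q,Q',\theta)$ (legitimate, since membership in $\cG(W,\epsilon)$ depends only on $\sigma|_W,\sigma'|_W$, a finite amount of data) lets you push $\eta$ forward directly to $\sym(m')^G\times A^{[m']}$, and you repair the first marginal with the explicit lump $\rho=(\kappa'-\kappa_0')\times\delta_{a_0^{[m']}}$ rather than with fibered extensions. Your entropy estimate is also cleaner: the chain of fiber-measure manipulations in the paper is replaced by concavity of conditional entropy (first over the $\rho$-mixture, then over the $\sigma$ coupled to a given $\sigma'$) together with the one-line bound $H(\psi|_Q\mid\beta\circ\psi|_Q)\ge H(\psi\mid\beta\circ\psi)-|Q^c|\log|A|$, and your final reduction of $\tfrac{m}{m'}\bigl(h(\bX|\beta\circ\bX)-2\epsilon\log|A|\bigr)$ to the stated $(1-\epsilon)^2$ form, via the trivial-case split when $h(\bX|\beta\circ\bX)<2\epsilon\log|A|$, is valid. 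What the paper's normalization buys is completely mechanical bookkeeping in the total-variation and ``moreover'' estimates (the explicit coupling $\Theta$); in your write-up that bookkeeping in item (2) and in the ``moreover'' inequality is left at sketch level, and if you flesh it out you should do so along the lines of the paper's $\Theta$-construction (restrict to the $\cG(W,\epsilon)$-mass and to the good points, use that $\theta$ matches $W$-patterns there, and compare the two uniform measures using $1-\epsilon\le m/m'\le(1-\epsilon)^{-1}$); the constants then come out with room to spare, and in fact your version of the ``moreover'' bound, with $(1-\epsilon)$ replaced by $1$, is slightly stronger than what the lemma asserts.
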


\begin{proof}
Let $\vartheta$ be as in Definition \ref{defn:close}.  So $\vartheta(\cG(W,\epsilon)) \ge 1-\epsilon$ where  $\cG(W,\epsilon)$ is the set of all $(\sigma,\sigma') \in \sym(m)^G \times \sym(m')^G$ that are $(W,\epsilon)$-close to each other. Thus, for every $(\sigma,\sigma') \in \cG(W,\epsilon)$ there exists a bijection $\beta_{\sigma,\sigma'}:Q_{\sigma,\sigma'} \to Q'_{\sigma,\sigma'}$ between subsets $Q_{\sigma,\sigma'} \subset [m]$, $Q'_{\sigma,\sigma'} \subset [m']$ such that 
\begin{enumerate}
\item $\sigma'(w)\beta_{\sigma,\sigma'}(q) = \beta_{\sigma,\sigma'}(\sigma(w)q)$ for all $w \in W$ and $q\in Q_{\sigma,\sigma'}$ with $\sigma(w)q \in Q_{\sigma,\sigma'}$;
\item $\sigma(w)\beta_{\sigma,\sigma'}^{-1}(q') = \beta_{\sigma,\sigma'}^{-1}(\sigma'(w)q')$ for all $w \in W$ and $q'\in Q'_{\sigma,\sigma'}$ with $\sigma'(w)q' \in Q'_{\sigma,\sigma'}$;
\item $|Q_{\sigma,\sigma'}| \ge (1-\epsilon)m$, $|Q'_{\sigma,\sigma'}| \ge (1-\epsilon)m'$.
\end{enumerate}
After removing elements from each $Q_{\sigma,\sigma'}$ and $Q'_{\sigma,\sigma'}$ if necessary, we may assume, without loss of generality, that there exists a number $m'' \le \min(m,m')$ so that $m'' \ge (1-\epsilon)\max(m,m')$ and $m''=|Q_{\sigma,\sigma'}|=|Q'_{\sigma,\sigma'}|$ for every $(\sigma,\sigma') \in \cG(W,\epsilon)$. There exist elements $\tau_{\sigma,\sigma'} \in \sym(m)$ and $\tau'_{\sigma,\sigma'} \in \sym(m')$ such that $\tau_{\sigma,\sigma'}Q_{\sigma,\sigma'} = [m'']$, $\tau'_{\sigma,\sigma'}Q'_{\sigma,\sigma'} = [m'']$ and $\tau'_{\sigma,\sigma'}\beta_{\sigma,\sigma'}\tau_{\sigma,\sigma'}^{-1}$ is the identity map on $[m'']$. Let $\vartheta'$ be the measure obtained from $\vartheta$ by pushing forward under the map defined by: $(\sigma,\sigma') \mapsto (\tau_{\sigma,\sigma'} \sigma \tau_{\sigma,\sigma'}^{-1}, \tau'_{\sigma,\sigma'} \sigma (\tau'_{\sigma,\sigma'})^{-1})$ for $(\sigma,\sigma') \in \cG(W,\epsilon)$ and the map is equal to the identity on the complement of $\cG(W,\epsilon)$. The marginals of $\vartheta'$ are conjugate to $\kappa$ and $\kappa'$ respectively. By the previous lemma, therefore, we may assume that $\vartheta'=\vartheta$.

About notation: if $\mu$ is a measure on a space $X$ and $\pi:X \to Y$ is a Borel map, we let $\mu_y$ denote the fiber measure of $\mu$ over $y\in Y$. It is a measure on $X$ supported on $\pi^{-1}(y)$ and $\mu = \int \mu_y ~d\pi_*\mu(y)$. Note that $\mu_y$ depends on $\pi$ but this dependence is left implicit. If $\mu$ is a probability measure then $\mu_y$ is a probability measure for $\pi_*\mu$-a.e. $y\in Y$. If $\alpha:X \to Y$ and $\beta:Y \to Z$ and $\mu$ is a measure on $X$ then $\mu_z = \int \mu_{\alpha(x)}~d\mu_z(x)$ and $\alpha_*(\mu_z)=(\alpha_*\mu)_z$ for $\beta_*\alpha_*\mu$-a.e. $z\in Z$.


{\bf Case 1}. Suppose $\vartheta(\cG(W,\epsilon))=1$. We will prove that there exists a probability measure $\eta'$ on $\sym(m')^G \times A^{[m']}$ which projects to $\kappa'$ such that if $\bX'$ is the approximate process constructed from $\eta'$ then $d_W(\bX',\bX)\le 4\epsilon |W|$, $h(\bX'|\beta \circ \bX') \ge (1-\epsilon) h(\bX|\beta\circ\bX) - (1-\epsilon) \epsilon\log|A|$ and
 \begin{eqnarray*}
\eta'\Big(\big\{ (\sigma,\psi):~d_W( (\sigma,\psi), \phi) \le (4|W|+1)\epsilon \big\}\Big) \ge \eta\Big(\big\{ (\sigma,\psi):~d_W( (\sigma,\psi), \phi) \le \epsilon \big\}\Big).
\end{eqnarray*}
Let $\pi:\sym(m)^G \to \sym(m'')^G$, $\pi':\sym(m')^G \to \sym(m'')^G$ be Borel maps satisfying
\begin{enumerate}
\item if $\sigma \in \sym(m)^G, w\in W, p \in [m'']$ and $\sigma(w)p \in [m'']$ then $\pi(\sigma)(w)p = \sigma(w)p$;
\item if $\sigma' \in \sym(m')^G, w\in W, p \in [m'']$ and $\sigma'(w)p \in [m'']$ then $\pi'(\sigma')(w)p = \sigma'(w)p$;
\item if $\sigma \in \sym(m)^G$ and $\sigma'(m')^G$ satisfy $\sigma(w)p = \sigma'(w)p$ for every $w\in W$ and $p\in [m'']$ such that $\sigma(w)p \in [m'']$ and $\sigma'(w)p \in [m'']$ then $\pi(\sigma)=\pi'(\sigma')$.
\end{enumerate}
Because we assume $\vartheta(\cG(W,\epsilon))=1$, it follows from these assumptions that $\pi_*\kappa=\pi'_*\kappa'$. Let $\kappa''$ denote $\pi_*\kappa$. Let $R:A^{[m]} \to A^{[m'']}$ be the restriction map.  Let $\eta'' = (\pi \times R)_*\eta$ be the pushforward measure and $\bX''$ be the approximate process constructed from $\eta''$.


Note that if $\zeta$ is any probability measure on $A^{[m]}$ then $H(R_*\zeta) \ge H(\zeta) - (m-m'')\log|A|$. Thus,
\begin{eqnarray*}
h(\bX|\beta\circ\bX) &=& \frac{1}{m} \int H(\eta_{\sigma,\beta\psi}) ~d\eta(\sigma,\psi)\\
&\le & \frac{1}{m} \int H( (id \times R)_*(\eta_{\sigma,\beta\psi})) ~d\eta(\sigma,\psi) + \frac{m-m''}{m}\log|A|
\end{eqnarray*}
where $id$ denotes the identity map on $\sym(m)^G$. By the definition of fiber measure,
$$\int \eta_{\sigma,\beta\psi'} ~d\eta_{\sigma,\beta R \psi}(\sigma,\psi') = \eta_{\sigma,\beta R \psi}.$$
Therefore,
$$\int (id \times R)_*(\eta_{\sigma,\beta\psi'}) ~d\eta_{\sigma,\beta R \psi}(\sigma,\psi') = ((id \times R)_*\eta)_{\sigma,\beta R \psi}.$$
So by concavity of entropy,
$$\int H((id \times R)_*(\eta_{\sigma,\beta\psi'})) ~d\eta_{\sigma,\beta R \psi}(\sigma,\psi') \le H( ((id \times R)_*\eta)_{\sigma,\beta R \psi}).$$
By integrating over all $(\sigma,\psi) \in \sym(m)^G \times A^{[m]}$, we obtain
$$\int H( (id \times R)_*(\eta_{\sigma,\beta\psi})) ~d\eta(\sigma,\psi) \le \int H\Big( ( (id \times R)_*\eta)_{\sigma,\beta R \psi} \Big)~d \eta(\sigma,\psi).$$
Note $\pi\times id $ is injective on the support of $((id \times R)_*\eta)_{\sigma,\beta R \psi} $. So
$$ H\Big( ( (id \times R)_*\eta)_{\sigma,\beta R \psi} \Big) = H\Big( (\pi \times id)_*\big(( (id \times R)_*\eta)_{\sigma,\beta R \psi}\big) \Big).$$
Note that $((id \times R)_*\eta)_{\sigma,\beta R \psi} = (id \times R)_*(\eta_{\sigma,\beta R \psi})$. Therefore
$$(\pi \times id)_*\big(( (id \times R)_*\eta)_{\sigma,\beta R \psi}\big) = (\pi\times R)_* (\eta_{\sigma,\beta R \psi}).$$
Because $\int \eta_{\sigma',\beta R \psi}~d\eta_{\pi\sigma,\beta R\psi}(\sigma',\psi') =\eta_{\pi \sigma, \beta R \psi}$, it follows that
$$\int (\pi\times R)_* \eta_{\sigma',\beta R \psi}~d\eta_{\pi\sigma,\beta R\psi}(\sigma',\psi') = (\pi\times R)_* (\eta_{\pi \sigma, \beta R \psi}) = \eta''_{\pi\sigma,\beta R \psi}.$$
By concavity of entropy,
$$\int H((\pi\times R)_* \eta_{\sigma',\beta R \psi})~d\eta_{\pi\sigma,\beta R\psi}(\sigma',\psi') \le H(\eta''_{\pi\sigma,\beta R \psi}).$$
So,
$$\int H\Big( ((id \times R)_*\eta)_{\sigma,\beta R \psi} \Big)~d\eta(\sigma,\psi)=\int H((\pi\times R)_* (\eta_{\sigma,\beta R \psi})) ~d\eta(\sigma,\psi) \le \int H(\eta''_{\pi\sigma,\beta R \psi})~d\eta(\sigma,\psi).$$
Since $\frac{m-m''}{m} \le \epsilon$,
\begin{eqnarray*}
h(\bX|\beta\circ\bX) &\le&\epsilon \log|A| + \frac{1}{m}\int H\left( \eta''_{\pi \sigma,\beta R \psi}\right) ~d \eta(\sigma,\psi)\\
&=& \epsilon \log|A| + \frac{1}{m}\int H\left( \eta''_{\sigma'',\beta \psi''}\right) ~d \eta''(\sigma'',\psi'')\\
&=&\epsilon\log|A| + \frac{m''}{m} h(\bX''|\beta \circ \bX'').
\end{eqnarray*}


Next let $R':A^{[m']} \to A^{[m'']}$ be the restriction map. We claim that there exists a probability measure $\eta'$ on $\sym(m')^G\times A^{[m']}$ such that the first marginal of $\eta'$ is $\kappa'$ and $(\pi'\times R')_*\eta'=\eta''$. To see this, let $a \in A$ be an arbitrary element. For $\psi \in A^{[m'']}$, define $\psi_a \in A^{[m']}$ by $\psi_a(p)=\psi(p)$ if $p\in [m'']$ and $\psi_a(p)=a$ if $p \in [m']\setminus[m'']$. For $(\sigma, \psi) \in \sym(m'')^G\times A^{[m'']}$, let $\eta'_{\sigma,\psi}$ be the probability measure on $\sym(m')^G\times A^{[m']}$ whose first marginal is $\kappa'_\sigma$ and has support contained in $\{(\sigma',\psi_a):~\pi'(\sigma')=\sigma\}$. That is, $\eta'_{\sigma,\psi}$ is the direct product of $\kappa'_\sigma$ and the Dirac measure concentrated on $\psi_a$. Then $\eta':=\int \eta'_{\sigma,\psi}~d\eta''(\sigma,\psi)$ satisfies the claim because $\kappa'' = (\pi')_*\kappa'$ implies the first marginal of $\eta'$ is $\kappa'$ as required.

Let $\alpha:A^{[m'']} \to A^{[m']}$ be the map $\alpha(\psi)=\psi_a$. Let $\rho: \sym(m'')^G \times A^{[m'']} \to A^{[m'']}$ be the projection map. By construction, for $\kappa''$-a.e. $\sigma'' \in \sym(m'')^G$, $\eta'_{\sigma''} = \kappa'_{\sigma''} \times \alpha_*\rho_*(\eta''_{\sigma''})$. Therefore, for $\kappa'$-a.e. $\sigma$, $\eta'_\sigma $ is the direct product of $\delta_\sigma$, the Dirac measure concentrated on $\sigma$, with $\alpha_*\rho_*(\eta''_{\pi'\sigma})$. So for $\eta'$-a.e. $(\sigma,\psi)$, $\eta'_{\sigma,\beta R'\psi}=\delta_\sigma \times \alpha_*\rho_*(\eta''_{\pi'\sigma, \beta R' \psi})$ and $\alpha(R'\psi)=\psi$ which implies $\eta'_{\sigma,\beta R'\psi} = \eta'_{\sigma, \beta \psi}$. Because $R'\alpha$ is the identity map, 
\begin{eqnarray*}
(\pi'\times R')_*(\eta'_{\sigma,\beta \psi}) &=& (\pi'\times R')_*( \delta_\sigma \times \alpha_*\rho_*(\eta''_{\pi'\sigma, \beta R' \psi}) ) \\
&=& \delta_{\pi' \sigma} \times \rho_*\eta''_{\pi'\sigma, \beta R' \psi} = \eta''_{\pi'\sigma,\beta R' \psi}.
\end{eqnarray*}


 Let $\bX'$ be the approximate process constructed from $\eta'$. Then 
\begin{eqnarray*}
h(\bX'|\beta \circ \bX') &=& \frac{1}{m'} \int H(\eta'_{\sigma,\beta \psi})~d\eta'(\sigma,\psi)\ge  \frac{1}{m'} \int H(\eta''_{\pi'\sigma,\beta R'\psi})~d\eta'(\sigma,\psi)\\
&=& \frac{1}{m'} \int H(\eta''_{\pi'\sigma,\beta R'\psi})~d\eta''(\pi'\sigma,R'\psi)= \frac{m''}{m'} h(\bX''|\beta \circ \bX'')\\
 &\ge& (1-\epsilon) h(\bX|\beta\circ\bX) - (1-\epsilon) \epsilon\log|A|.
\end{eqnarray*}

Let $\chi:\sym(m)^G \times A^{[m]} \times [m] \to A$ be the map $\chi(\sigma,\psi,p)=\psi(p)$. Define $\chi'':\sym(m'')^G \times A^{[m'']} \times [m''] \to A$ similarly. Then 
$$d_W(\bX,\bX'') = \frac{1}{2} \left\| \chi^W_*( \eta\times u) - (\chi'')^W_*(\eta''\times u'') \right\|_1$$
where $u,u''$ denote the uniform probability measures on $[m]$, $[m'']$ respectively. So if $\Theta$ is any probability measure on $\sym(m)^G \times A^{[m]} \times [m] \times \sym(m'')^G \times A^{[m'']} \times [m'']$ with marginals $\eta \times u$ and $\eta''\times u''$ then $d_W(\bX,\bX'') \le \Theta(S)$ where $S = \{(\sigma,\psi,p,\sigma'',\psi'',p'') :~ \chi^W(\sigma,\psi,p) \ne (\chi'')^W(\sigma'',\psi'',p'')\}$.

Let $\Theta''$ be the pushforward of $\eta \times u''$ under the map $(\sigma, \psi,p) \mapsto (\sigma,\psi,p, \pi\sigma,R\psi, p)$. Let $u_0$ be the uniform probability measure on $[m] \setminus [m'']$. Then 
$$\eta \times u = \left(\frac{m-m''}{m}\right)\eta \times u_0 + \left(\frac{m''}{m}\right) \eta \times u''.$$
So if $\Theta_0 = \eta \times u_0 \times \eta''\times u''$ and 
$$\Theta:= \left(\frac{m-m''}{m}\right) \Theta_0 + \left(\frac{m''}{m}\right)  \Theta''$$
then $\Theta$ has marginals $\eta \times u$ and $\eta''\times u''$. Moreover, $\Theta(S) \le \frac{m-m''}{m} + \frac{m''}{m} \Theta''(S)$ and, we claim, $\Theta''(S) \le |W|\epsilon$. This is because if $U$ is the set of all $(\sigma,\psi,p,\pi\sigma,R\psi,p)$ such that $\sigma(g)p \in [m'']$ for all $g\in W$ then $U \cap S = \emptyset$ and $\Theta''(U) \ge 1- |W|\frac{m-m''}{m}$. Thus 
\begin{eqnarray*}
d_W(\bX,\bX'') &\le& \Theta(S) \le \frac{m-m''}{m} + \frac{m''}{m} \Theta''(S)\\
 &\le& \frac{m-m''}{m} + \frac{m''}{m}\left(\frac{m-m''}{m} \right) |W| \le 2\epsilon|W|.
 \end{eqnarray*}
 Similarly, $d_W(\bX',\bX'') \le 2\epsilon |W|$. So $d_W(\bX,\bX') \le 4\epsilon |W|$. 
 
 By applying the same argument to the Dirac measure concentrated on an arbitrary $(\sigma,\psi) \in \sym(m)^G\times A^{[m]}$ we obtain
 $$d_W((\sigma,\psi), (\pi\sigma,R\psi))  \le 2\epsilon |W|.$$
 A similar statement holds with $(\sigma',\psi') \in \sym(m')^G \times A^{[m']}$. So,
 \begin{eqnarray*}
\eta'\Big(\big\{ (\sigma,\psi):~d_W( (\sigma,\psi), \phi) \le (4|W|+1)\epsilon \big\}\Big) \ge \eta\Big(\big\{ (\sigma,\psi):~d_W( (\sigma,\psi), \phi) \le \epsilon \big\}\Big).
\end{eqnarray*}
This finishes case 1.

{\bf Case 2}. This is the general case. Let $\kappa_1, \kappa'_1$ be the marginals of $\vartheta$ restricted to $\cG(W,\epsilon)$ and normalized to have total mass $1$. Let $\eta_1 = \int \eta_\sigma ~d\kappa_1(\sigma)$. So $\eta_1$ is absolutely continuous to $\eta$. Let $\bX_1$ be the approximate process constructed from $\eta_1$. 

Observe that 
\begin{eqnarray*}
h(\bX_1 | \beta \circ \bX_1)  &=& \frac{1}{m} \int H(\eta_{\sigma,\beta \psi}) d\eta_1(\sigma,\psi)\\
 &=& \frac{1}{m} \frac{1}{\vartheta(\cG(W,\epsilon))} \int_{\cG(W,\epsilon)} \int H(\eta_{\sigma,\beta \psi}) d\eta_{\sigma}(\psi) d\vartheta(\sigma,\sigma') \\
 &\ge& -\epsilon \log|A| + \frac{1}{m} \int \int H(\eta_{\sigma,\beta \psi}) d\eta_{\sigma}(\psi) d\vartheta(\sigma,\sigma') \\
 &=& -\epsilon\log|A| + h(\bX|\beta\circ\bX).
  \end{eqnarray*}
  It is easy to check that $d_W(\bX_1,\bX) \le \epsilon$ and
 \begin{eqnarray*}
\eta_1\Big(\big\{ (\sigma,\psi):~d_W( (\sigma,\psi), \phi) \le \epsilon \big\}\Big) \ge  \eta\Big(\big\{ (\sigma,\psi):~d_W( (\sigma,\psi), \phi) \le \epsilon \big\}\Big) - \epsilon.
\end{eqnarray*}

By case 1, there exists a measure $\eta_1'$ on $\sym(m')^G \times A^{[m']}$ such that $\eta'_1$ projects to $\kappa_1'$ and if $\bX'_1$ is the approximate process constructed from $\eta_1'$ then $d_W(\bX'_1,\bX_1)\le 4\epsilon |W|$, $h(\bX'_1|\beta \circ \bX'_1) \ge (1-\epsilon) h(\bX_1|\beta\circ\bX_1) - (1-\epsilon) \epsilon\log|A|$ and
\begin{eqnarray*}
\eta'_1\Big(\big\{ (\sigma,\psi):~d_W( (\sigma,\psi), \phi) \le (4|W|+1)\epsilon \big\}\Big) \ge \eta_1\Big(\big\{ (\sigma,\psi):~d_W( (\sigma,\psi), \phi) \le \epsilon \big\}\Big).
\end{eqnarray*}

Of course, $\kappa_1'$ is absolutely continuous with respect to $\kappa'$. Let $\eta'$ be any probability measure on $\sym(m')^G\times A^{[m']}$ such that $\eta'$ projects to $\kappa'$ and 
$$\eta'_1 = \int \eta'_\sigma ~d\kappa'_1(\sigma).$$
Let $\bX'$ be the approximate process constructed from $\eta'$. Because $\kappa' \ge (1-\epsilon)\kappa'_1$ it follows that $\eta' \ge (1-\epsilon)\eta'_1$. So
\begin{eqnarray*}
h(\bX'|\beta \circ \bX') &=& \frac{1}{m'} \int H(\eta'_{\sigma,\beta\psi})~d\eta'(\sigma,\psi)\ge  (1-\epsilon) \frac{1}{m'} \int H(\eta'_{\sigma,\beta\psi})~d\eta'_1(\sigma,\psi)\\
&=& (1-\epsilon) h(\bX'_1|\beta \circ \bX'_1)\ge (1-\epsilon)^2 h(\bX_1|\beta\circ\bX_1) - (1-\epsilon)^2 \epsilon\log|A|\\
&\ge& (1-\epsilon)^2h(\bX|\beta\circ\bX) -2 (1-\epsilon)^2 \epsilon\log|A|.
\end{eqnarray*}
Of course, we also have $d_W(\bX'_1,\bX') \le \epsilon$ and because $\eta' \ge (1-\epsilon)\eta'_1$,
 \begin{eqnarray*}
\eta'\Big(\big\{ (\sigma,\psi):~d_W( (\sigma,\psi), \phi) \le(4|W|+1)\epsilon \big\}\Big) \ge (1-\epsilon)\eta'_1\Big(\big\{ (\sigma,\psi):~d_W( (\sigma,\psi), \phi) \le (4|W|+1)\epsilon \big\}\Big).
\end{eqnarray*}
Thus 
$$d_W(\bX,\bX')  \le d_W(\bX,\bX_1) +d_W(\bX_1,\bX_1') + d_W(\bX'_1,\bX') \le 6\epsilon|W|$$
and
\begin{eqnarray*}
\eta'\Big(\big\{ (\sigma,\psi):~d_W( (\sigma,\psi), \phi) \le (4|W|+1)\epsilon \big\}\Big) &\ge& (1-\epsilon)\eta'_1\Big(\big\{ (\sigma,\psi):~d_W( (\sigma,\psi), \phi) \le (4|W|+1)\epsilon \big\}\Big)\\
&\ge& (1-\epsilon)\eta_1\Big(\big\{ (\sigma,\psi):~d_W( (\sigma,\psi), \phi) \le \epsilon \big\}\Big)\\
&\ge& (1-\epsilon)\eta\Big(\big\{ (\sigma,\psi):~d_W( (\sigma,\psi), \phi) \le \epsilon \big\}\Big) - \epsilon.
\end{eqnarray*}

\end{proof}

\begin{proof}[Proof of Theorem \ref{thm:asymptotic2r}]
Let $\{\bX_j\}_{j=1}^\infty$ be a sequence of approximate processes adapted to $\sK'$ (where $\sK'$ is a subsequence of $\sK$) such that $\lim_{j\to\infty} \bX_j = \bX$. Let $\eta_j$ be the probability measure on $\sym(m_j)^G\times A^{[m_j]}$ from which $\bX_j$ is constructed.

Let $\epsilon>0$ and $W \subset G$ be finite. Let $\sL=\{\lambda_j\}_{j=1}^\infty$. By the previous lemma, for all sufficiently large $j$ there exists an approximate process $\bY_j$ adapted to $\lambda_j$ such that $h(\bY_j|\beta \circ \bY_j) \ge (1-\epsilon)^2 h(\bX_j|\beta \circ\bX_j) -2\epsilon\log|A|$, $d_{W}(\bY_j, \bX_j) \le 6|W|\epsilon$ and if $\eta'_j$ is the measure on $\sym(m'_j)^G\times A^{[m'_j]}$ from which $\bY_j$ is constructed then
\begin{eqnarray*}
\eta'_j\Big(\big\{ (\sigma,\psi):~d_W( (\sigma,\psi), \phi) \le (4 |W|+1)\epsilon \big\}\Big)&\ge& (1-\epsilon)\eta_j\Big(\big\{ (\sigma,\psi):~d_W( (\sigma,\psi), \phi) \le \epsilon \big\}\Big) - \epsilon.
\end{eqnarray*}
So if $\{W_n\}_{n=1}^\infty$ is an increasing sequence of finite subsets of $G$ then for every $n$ there is a $J(n)>0$ so that for every $j\ge  J(n)$ there is an approximate process $\bY_j$ adapted to $\lambda_j$ such that $h(\bY_j|\beta \circ \bY_j) \ge (1-\frac{1}{n})h(\bX_j|\beta \circ \bX_j) - \frac{1}{n}$, $d_{W_n}(\bY_j, \bX_j) \le \frac{1}{n}$ and, if $\lim_{j\to\infty} \bX_j = \bX$ strongly then
\begin{eqnarray*}
\eta'_j\Big(\big\{ (\sigma,\psi):~d_{W_n}( (\sigma,\psi), \phi) \le \frac{1}{n} \big\}\Big)&\ge& 1-\frac{1}{n}.
\end{eqnarray*}
Without loss of generality we may assume $\{J(n)\}_{n=1}^\infty$ is an increasing sequence. So $\lim_{n\to\infty} \bY_{J(n)} = \bX$, $\{\bY_{J(n)}\}_{n=1}^\infty$ is adapted to a subsequence of $\sL$ and $\limsup_{n\to\infty} h(\bY_n|\beta \circ \bY_n)\ge \bh(\bX|\beta \circ \bX)$. So $\bh(\sL,\bX|\beta \circ \bX)\ge \bh(\sK,\bX|\beta \circ \bX)$. By symmetry we must in fact have $\bh(\sL,\bX|\beta \circ \bX)=\bh(\sK,\bX|\beta \circ \bX)$ as claimed. Moreover, if  if $\lim_{j\to\infty} \bX_j = \bX$ strongly then  $\lim_{n\to\infty} \bY_{J(n)} = \bX$ strongly.  So $h(\sL,\bX|\beta \circ \bX)=h(\sK,\bX|\beta \circ \bX)$. 
 \end{proof}
 
 Next we extend Proposition \ref{prop:Z} to the relative case:
\begin{prop}\label{prop:Zrelative}
Let $\sK=\{\kappa_i\}_{i=1}^\infty$ be a random sofic approximation of $\Z$. Let $\bX$ be a process over $\Z$ with range $A$ and let $\beta:A \to B$ be a map. Then $\bh(\sK,\bX|\beta \circ \bX) = h(\sK,\bX|\beta \circ \bX)= h(\bX|\beta \circ \bX)$. 
\end{prop}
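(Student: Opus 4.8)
The goal is to prove the relative version of Proposition~\ref{prop:Z}: for a random sofic approximation $\sK$ of $\Z$, a finite-range process $\bX=(T,X,\mu,\phi)$ over $\Z$, and a map $\beta:A\to B$, all three quantities $\bh(\sK,\bX|\beta\circ\bX)$, $h(\sK,\bX|\beta\circ\bX)$ and $h(\bX|\beta\circ\bX)$ coincide. The strategy mirrors the non-relative case exactly, so I would first reduce, via Theorem~\ref{thm:Zasymptotic} and Theorem~\ref{thm:asymptotic2r}, to the canonical sofic approximation $\Sigma=\{\sigma_i\}$ with $\sigma_i:\Z\to\sym(m_i)$ the cyclic rotation $\sigma_i(1)=(1,2,\ldots,m_i)$; this is legitimate because Theorem~\ref{thm:asymptotic2r} is precisely the relative-entropy strengthening of Theorem~\ref{thm:asymptotic2}. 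Given this reduction, Corollary~\ref{cor:bhinequality} (or rather its obvious relative analogue, which follows the same way from the relative strong-convergence proposition) gives $h(\Sigma,\bX|\beta\circ\bX)\le\bh(\Sigma,\bX|\beta\circ\bX)$, so it remains to prove the two bounds $\bh(\Sigma,\bX|\beta\circ\bX)\le h(\bX|\beta\circ\bX)$ and $h(\bX|\beta\circ\bX)\ge\bh(\Sigma,\bX|\beta\circ\bX)$ — wait, more precisely the ``sandwich'': $h(\bX|\beta\circ\bX)\le h(\Sigma,\bX|\beta\circ\bX)$ and $\bh(\Sigma,\bX|\beta\circ\bX)\le h(\bX|\beta\circ\bX)$.

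\textbf{The lower bound $h(\Sigma,\bX|\beta\circ\bX)\ge h(\bX|\beta\circ\bX)$.} Here I would imitate the second lemma in \S\ref{sec:Z}. Put $\psi=\beta\circ\phi$ and let $\bY=\beta\circ\bX$. Using the same standard trick, reduce to $X=A^\Z$, $T$ the shift, $\phi$ the time-$0$ coordinate, $\psi=\beta\circ\phi$. Form the approximate processes $\bX_i$ from $\eta_i:=\delta_i\times(\phi^{m_i})_*\mu$ as in the earlier lemma; the F\o lner property of $[m_i]\subset\Z$ gives $\lim_i\bX_i=\bX$ strongly, hence $\lim_i\beta\circ\bX_i=\beta\circ\bX$ strongly by Lemma~\ref{lem:composition}. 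The definition of classical relative entropy via the Abramov–Rohlin formula ($h(\bX|\bY)=h(\bX)-h(\bY)$) together with the fact that $h(\bX_i)\to h(\bX)$ and $h(\beta\circ\bX_i)\to h(\beta\circ\bX)$ (both by the classical entropy-rate limit) gives $h(\bX_i)-h(\beta\circ\bX_i)\to h(\bX|\bY)$. Since $\{\bX_i\}$ is a sequence adapted to $\Sigma$ converging strongly to $\bX$, this shows $h(\Sigma,\bX|\beta\circ\bX)\ge h(\bX|\bY)$.

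\textbf{The upper bound $\bh(\Sigma,\bX|\beta\circ\bX)\le h(\bX|\beta\circ\bX)$ — the main obstacle.} This is where the relative case genuinely differs from the absolute one, and I expect it to be the crux. Given a sequence $\{\bX_j\}$ adapted to a subsequence of $\Sigma$ with $\lim_j\bX_j=\bX$ and $\lim_j\big(h(\bX_j)-h(\beta\circ\bX_j)\big)=\bh(\Sigma,\bX|\beta\circ\bX)$, constructed from measures $\nu_j$ on $A^{[m_j]}$, I would, as in the proof of Proposition~\ref{prop:Z}, build the shift-invariant measures $\mu_j$ on $A^\Z$ by averaging the $\Z$-fold product measure $\nu_j^\Z$ pushed through the concatenation map $\pi_j$ over the $m_j$ shifts; then $\mu_j\to\mu$ weak-$*$ in $M(A^\Z)$. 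Let $\bmu_j$ and $\bmu$ be the corresponding measures on $B^\Z$ obtained by applying $\beta$ coordinatewise; these also converge weak-$*$. The argument in Proposition~\ref{prop:Z} (concavity of entropy applied to the averaging, then $\frac{H(\nu_j)}{m_j}\le h_{\mu_j}(\phi)$) gives $\frac{H(\nu_j)}{m_j}\le h_{\mu_j}(\phi)$; one needs the \emph{matching lower bound} on the $\beta$-coordinate side, namely $\frac{H(\beta_*\nu_j)}{m_j}\ge$ something comparable to $h_{\bmu_j}(\psi)$, so that the difference $\frac{H(\nu_j)}{m_j}-\frac{H(\beta_*\nu_j)}{m_j}$ can be controlled from above by $h_{\mu_j}(\phi|\psi)$. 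Concretely: $h(\bX_j)-h(\beta\circ\bX_j)=\frac1{m_j}\big(H(\nu_j)-H(\beta_*\nu_j)\big)=\frac1{m_j}H(\nu_j\mid\beta_*\nu_j)$, a conditional Shannon entropy; I want to bound this by the classical conditional entropy rate $h_{\mu_j}(\phi|\psi)$, up to errors vanishing with $j$, and then invoke upper semicontinuity of $\lambda\mapsto h_\lambda(\phi|\psi)$ on the space of shift-invariant measures — which holds because $h_\lambda(\phi|\psi)=h_\lambda(\phi\vee\psi)-h_\lambda(\psi)$ and $\psi=\beta\circ\phi$ makes $\phi\vee\psi$ a function of $\phi$, so $h_\lambda(\phi|\psi)=h_\lambda(\phi)-h_\lambda(\beta\circ\phi)$ is \emph{not} automatically u.s.c. as a difference of u.s.c. functions.

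This last point is the real difficulty and I would handle it exactly as in the absolute proof but working one level down: bound $\frac1{m_j}H(\nu_j\mid\beta_*\nu_j)$ by $h_{\mu_j}(\phi|\psi)$ using that $h_{\mu_j}(\phi|\psi)=h_{\mu_j}(\phi)-h_{\mu_j}(\psi)\ge \frac{H(\nu_j)}{m_j} - h_{\mu_j}(\psi)$ does \emph{not} immediately work because the inequality $\frac{H(\nu_j)}{m_j}\le h_{\mu_j}(\phi)$ goes the wrong way for a subtraction. Instead, the correct route (and the one I expect the author takes) is to note that the conditional object factors through the Bernoulli-extension lemmas of \S\ref{sec:relative}: one uses Lemma~\ref{lem:bhproduct3} together with the fact that a process over $\Z$ relative to a factor can, after passing to a finite generating refinement, be sandwiched between a product with a Bernoulli system and then handled coordinate-by-coordinate, reducing to the absolute statement Proposition~\ref{prop:Z} applied to $\phi\vee(\text{a Bernoulli label})$ and to $\psi\vee(\text{the same Bernoulli label})$ separately and subtracting. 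The cleanest formulation: pick a finite-range $\phi'$ refining $\phi$ such that $\phi'$ generates and such that relative to the factor generated by $\psi$ the observable $\phi'$ has the same relative entropy; apply Proposition~\ref{prop:Z} to $\phi'\vee\beta'$ and to $\psi\vee\beta'$ for a Bernoulli label $\beta'$ of large enough entropy to kill cross-terms (Lemma~\ref{lem:bhproduct2}, Lemma~\ref{lem:bhproduct3}); the difference of the two absolute equalities, after cancelling the Bernoulli contributions $H(\omega)$, yields $\bh(\Sigma,\bX|\beta\circ\bX)=h(\bX|\beta\circ\bX)$, and the inequality $\bh\le h$ in particular. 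Combining with the lower bound and with Corollary~\ref{cor:bhinequality}'s relative analogue gives the full chain of equalities, and undoing the reduction to $\Sigma$ via Theorem~\ref{thm:asymptotic2r} and Theorem~\ref{thm:Zasymptotic} completes the proof.
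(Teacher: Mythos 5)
Your overall skeleton matches the paper up to the hard step: the reduction to the cyclic approximation $\Sigma$ via Theorems \ref{thm:Zasymptotic} and \ref{thm:asymptotic2r} is exactly the paper's, and your lower bound $h(\Sigma,\bX|\beta\circ\bX)\ge h(\bX|\beta\circ\bX)$ via the empirical measures $\phi^{m_i}_*\mu$ is correct (the paper gets the easy inequalities even more directly from Proposition \ref{prop:Z} and the definitions). The genuine gap is in the only substantive step, $\bh(\Sigma,\bX|\beta\circ\bX)\le h(\bX|\beta\circ\bX)$. You correctly diagnose the obstruction (the difference $h_\lambda(\phi)-h_\lambda(\beta\circ\phi)$ is not upper semicontinuous, and subtracting absolute statements goes the wrong way), but your proposed repair --- refine $\phi$, adjoin an independent Bernoulli label, apply Proposition \ref{prop:Z} to $\phi\vee(\textrm{label})$ and to $\psi\vee(\textrm{label})$, and subtract, citing Lemmas \ref{lem:bhproduct2} and \ref{lem:bhproduct3} --- does not close it. The relative quantity is a supremum over a \emph{single} sequence of the differences $h(\bX_j)-h(\beta\circ\bX_j)$; a supremum of differences is not controlled by a difference of suprema, and the inequality you would need, $\liminf_j h(\beta\circ\bX_j)\ge h(\beta\circ\bX)$ along an arbitrary sequence with $\bX_j\to\bX$, is false in general (the pushed-forward measures $\beta_*\nu_j$ can have far less entropy per site than $h(\beta\circ\phi)$). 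Adjoining an independent Bernoulli coordinate merely shifts both terms by $H(\omega)$ and does not couple them; moreover Lemmas \ref{lem:bhproduct2} and \ref{lem:bhproduct3} concern relative entropy over an \emph{independent} Bernoulli factor, where the product structure drives the proof, and say nothing about relative entropy over the deterministic factor $\beta\circ\phi$. (Deriving the statement from the orbit-equivalence machinery of \S\ref{sec:oe} would be circular, since Proposition \ref{prop:orbit} uses Proposition \ref{prop:Zrelative}.)

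What the paper actually does here is a direct finitary conditional-entropy argument that sidesteps semicontinuity entirely. Writing $\eta_i=\delta_i\times\lambda_i$, fix a block length $N$ dividing $m_i$; by repeated use of concavity of $H$, conditioning on the $\beta$-image and replacing $\lambda_i$ by a measure which is conditionally a product over (cyclically shifted) $N$-blocks can only increase $H(\lambda_i)-H(\beta_*\lambda_i)$, and averaging over the $m_i$ cyclic shifts yields
$$\frac{H(\lambda_i)-H(\beta_*\lambda_i)}{m_i}\ \le\ \frac{1}{N}\int H(\omega_{i,\zeta})\,d\omega_{\beta,i}(\zeta),$$
a functional of the $N$-block statistics only. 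For fixed $N$ this is continuous under the convergence $\bX_i\to\bX$, so its limsup is at most $\frac{1}{N}\big(H(\phi^N_*\mu)-H((\beta\circ\phi)^N_*\mu)\big)$, and letting $N\to\infty$ gives $h(\bX|\beta\circ\bX)$. Some finite-window conditioning argument of this kind --- bounding the per-site conditional entropy of $\lambda_i$ given $\beta_*\lambda_i$ by a continuous function of the $N$-block empirical distribution before taking any limits --- is the missing ingredient in your proof.
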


\begin{proof}
The inequalities
$$\bh(\sK,\bX|\beta \circ \bX) \ge  h(\sK,\bX|\beta \circ \bX)\ge h(\bX|\beta \circ \bX)$$
follow immediately from the definitions and Proposition \ref{prop:Z}. So it suffices to show $h(\bX|\beta \circ \bX) \ge \bh(\sK,\bX|\beta \circ \bX)$. By Theorem \ref{thm:asymptotic2r} and Theorem \ref{thm:Zasymptotic} we may assume $\sK$ is the non-random sofic approximation $\Sigma=\{\sigma_i\}_{i=1}^\infty$ where $\sigma_i:\Z\to \sym(m_i)$ is the homomorphism with $\sigma_i(1)=(1,2,\ldots,m_i)$. 

Let $N$ be a large positive integer. If $m'_i$ is the integer nearest to $m_i$ that is divisible by $N$ and $\sigma'_i:\Z\to \sym(m'_i)$ is the homomorphism with $\sigma'_i(1)=(1,2,\ldots,m'_i)$ then $\{\sigma'_i\}_{i=1}^\infty$ is asymptotic to $\{\sigma_i\}_{i=1}^\infty$. So without loss of generality, we may assume $N$ divides $m_i$ for each $i$.

Let $\{\bX_i\}_{i=1}^\infty$ be a sequence of approximate processes constructed from measures $\eta_i$ on $\sym(m_i)^\Z\times A^{[m_i]}$ (as in definition \ref{defn:eta}) adapted to $\sK'$, a subsequence of $\sK$, such that $\lim_{i\to\infty} \bX_i=\bX$ and $\lim_{i\to\infty} h(\bX_i)-h(\beta \circ\bX_i) = \bh(\sK,\bX|\beta \circ \bX)$. Without loss of generality, we may assume $\sK'=\sK$. So $\eta_i=\delta_i\times \lambda_i$  for some measure $\lambda_i$ on $A^{[m_i]}$ where $\delta_i$ is the probability measure concentrated on $\{\sigma_i\}$.

Fix $i$ for now.  For $a, b \in [m_i]$ let $a+b \in [m_i]$ denote their sum modulo $m_i$. Also let $[a,b]$ be the interval from $a$ to $b$: $[a,b] = \{a, a+1, a+2, \ldots, b\}$. For example, $[m_i-1,1] = \{m_i-1,m_i,1\}$. Let $\pi^{[a,b]}: A^{[m_i]} \to A^{[a,b]}$ be the projection map and $\lambda^{[a,b]}_i=\pi^{[a,b]}_*\lambda_i$ the pushforward measure. For $N>0$ a positive integer, let $\lambda'_{i,N}$ be the product measure 
$$\lambda'_{i,N}:=\lambda_i^{[1,N]}\times \lambda_i^{[N+1,2N]} \times \cdots \times \lambda_i^{[m_i-N+1,m_i]}.$$

Let $\beta^{[a,b]}:A^{[a,b]}\to B^{[a,b]}$ denote the map $(\beta^{[a,b]}\xi)(p)=\beta(\xi(p))$. In order to simplify notation, we write $\beta=\beta^{[a,b]}$ when $[a,b]$ is clear from the context. We claim that 
\begin{eqnarray}\label{claim1}
\frac{H(\lambda'_{i,N}) - H(\beta_*\lambda'_{i,N})}{m_i} \ge \frac{H(\lambda_i) - H(\beta_*\lambda_i) }{m_i} = h(\bX_i) - h(\beta \circ \bX_i).
\end{eqnarray}



For $\xi \in B^{[m_i]}$ and $\zeta \in B^{[a,b]}$ we let $\lambda_i(\cdot|\xi)$ and  $\lambda_i(\cdot|\zeta)$ be the measure defined for $E \subset A^{[m_i]}$ by
$$\lambda_i(E|\xi):=\frac{\lambda_i(E \cap (\beta^{[1,m_i]}\pi^{[1,m_i]})^{-1}(\xi))}{\lambda_i( (\beta^{[1,m_i]}\pi^{[1,m_i]})^{-1}(\xi))}, \quad \lambda_i(E|\zeta):=\frac{\lambda_i(E \cap (\beta^{[a,b]}\pi^{[a,b]})^{-1}(\zeta))}{\lambda_i( (\beta^{[a,b]}\pi^{[a,b]})^{-1}(\zeta))}.$$
Other conditional measures such as $\beta_*\lambda_i(\cdot|\zeta)$ are defined similarly.



 
 Let $\lambda''_{i,N}$ be the measure on $A^{[m_i]}$ satisfying $\beta_*\lambda''_{i,N} = \beta_*\lambda_i$ whose fiber over $\xi \in B^{[m_i]}$ is the measure
 $$\lambda''_{i,N}(\cdot |\xi) = \pi^{[1,N]}_*\lambda_i(\cdot|\xi) \times \pi^{[N+1,2N]}_*\lambda_i(\cdot|\xi) \times \cdots \times  \pi^{[m_i-N+1,m_i]}_*\lambda_i(\cdot|\xi).$$
  
Since $\beta_*\lambda''_{i,N} = \beta_*\lambda_i$ and $H(\lambda''_{i,N}(\cdot |\xi)) \ge H(\lambda_{i}(\cdot |\xi))$ for any $\xi$ we have
$$H(\lambda''_{i,N}) - H(\beta_*\lambda''_{i,N}) \ge H(\lambda_i) - H(\beta_*\lambda_i).$$
Note
\begin{eqnarray*}
H(\lambda''_{i,N}) - H(\beta_*\lambda''_{i,N}) &=&\int  \sum_{j=0}^{m_i/N-1}H\left( \pi^{[jN+1,jN+N]}_*\lambda_i\left(\cdot | \xi\right)\right) ~d\beta_*\lambda_i (\xi)\\
&= &\sum_{j=0}^{m_i/N-1}\iint  H\left( \pi^{[jN+1,jN+N]}_*\lambda_i\left(\cdot | \xi\right)\right) ~d\beta_*\lambda_i (\xi|\zeta)d\beta_*\lambda_i^{[jN+1,jN+N]}(\zeta) \\
&\le &\sum_{j=0}^{m_i/N-1}\int  H\left( \pi^{[jN+1,jN+N]}_*\lambda_i\left(\cdot | \zeta\right)\right) ~d\beta_*\lambda_i^{[jN+1,jN+N]}(\zeta) \\
&=&H(\lambda'_{i,N}) - H(\beta_*\lambda'_{i,N}).
\end{eqnarray*}
The inequality above holds by concavity of entropy. This proves the inequality in (\ref{claim1}). The equality in (\ref{claim1}) holds by definition. 

For each $p \in \{0,\ldots, m_i-1\}$ let $\lambda'_{i,p,N}$ be the product measure 
$$\lambda'_{i,p,N}:=\lambda_i^{[p+1,p+N]}\times \lambda_i^{[p+N+1,p+2N]} \times \cdots \times \lambda_i^{[p+m_i-N+1,p+m_i]}.$$
An argument similar to the one proving (\ref{claim1}) shows
\begin{eqnarray}\label{claim2}
H(\lambda'_{i,p,N}) - H(\beta_*\lambda'_{i,p,N}) \ge H(\lambda_i) - H(\beta_*\lambda_i).
\end{eqnarray}

For $a,b \in [m_i]$, define $\tau^{[a,b]}:A^{[1,b-a+1]} \to A^{[a,b]}$ by $\tau^{[a,b]}(\xi)(p) = \xi(p+a-1)$. Let $\tlambda_i^{[a,b]}$ be the measure defined for sets $E \subset A^{[1,b-a+1]}$ by
$$\tlambda_i^{[a,b]}(E):=\lambda_i^{[a,b]}(\tau^{[a,b]}E).$$
For $\zeta \in B^{[1,b-a+1]}$ let  $\tlambda_i^{[a,b]}(\cdot | \zeta)$ be the measure defined for sets $E \subset A^{[1,b-a+1]}$ by
$$\tlambda_i^{[a,b]}(E|\zeta):=\lambda_i^{[a,b]}(\tau^{[a,b]}E|\tau^{[a,b]} \zeta)$$
where we have abused notation by letting $\tau^{[a,b]}$ denote the analogous map from $B^{[1,b-a+1]} \to B^{[a,b]}$.



Because $H(\tlambda_i^{[p+1,p+N+1]}(\cdot|\zeta)) = H(\lambda_i^{[p+1,p+N+1]}(\cdot|\tau^{[p+1,p+N+1]}\zeta)$,
$$H(\lambda'_{i,p,N}) - H(\beta_*\lambda'_{i,p,N}) = \sum_{j=0}^{m_i/N-1} \int H(\tlambda_i^{[jN+p+1,(j+1)N+p]}(\cdot|\zeta)) ~d\beta_*\tlambda_i^{[jN+p+1,(j+1)N+p]}(\zeta).$$

By (\ref{claim2}),
\begin{eqnarray*}
\frac{H(\lambda_i) - H(\beta_*\lambda_i)}{m_i}&\le& \frac{1}{m_i^2} \sum_{p=0}^{m_i-1}H(\lambda'_{i,p,N}) - H(\beta_*\lambda'_{i,p,N}) \\ 
&= &\frac{1}{Nm_i} \sum_{p=0}^{m_i-1} \int H(\tlambda_i^{[p+1,p+N]}(\cdot|\zeta)) ~d\beta_*\tlambda_i^{[p+1,p+N]}(\zeta).
\end{eqnarray*}
By concavity of entropy, if for $\zeta \in B^{[1,N]}$, $\omega_{i,\zeta}$ is the measure on $A^{[1,N]}$ defined by 
$$\omega_{i,\zeta} =  \frac{ \sum_{p=0}^{m_i-1} \beta_*\tlambda_i^{[p+1,p+N]}(\{\zeta\})\cdot \tlambda_i^{[p+1,p+N]}(\cdot|\zeta) }{  \sum_{p=0}^{m_i-1} \beta_*\tlambda_i^{[p+1,p+N]}(\{\zeta\}) }$$
 then
$$\sum_{p=0}^{m_i-1} \int H(\tlambda_i^{[p+1,p+N]}(\cdot|\zeta)) ~d\beta_*\tlambda_i^{[p+1,p+N]}(\zeta) \le  \sum_{p=0}^{m_i-1} \int H(\omega_{i,\zeta}) ~d\beta_*\tlambda_i^{[p+1,p+N]}(\zeta) .$$

Therefore,
$$\frac{H(\lambda_i) - H(\beta_*\lambda_i)}{m_i} \le  \frac{1}{Nm_i} \sum_{p=0}^{m_i-1} \int H(\omega_{i,\zeta}) ~d\beta_*\tlambda_i^{[p+1,p+N]}(\zeta) .$$
So if $\omega_{\beta,i}$ is the measure on $B^{[1,N]}$ defined by
$$\omega_{\beta,i} := \frac{1}{m_i} \sum_{p=0}^{m_i-1} \beta_*\tlambda_i^{[p+1,p+N]}$$
then

$$\frac{H(\lambda_i) - H(\beta_*\lambda_i) }{m_i}\le \frac{1}{N} \int H(\omega_{i,\zeta}) ~d\omega_{\beta,i}(\zeta).$$

Let $\bX=(T,X,\mu,\phi)$. By construction, $\omega_{i,\zeta}$ converges (as $i\to\infty$) to the measure $\mu_\zeta$ defined for $E \subset A^{[1,N]}$ by 
$$\mu_\zeta(E):=\frac{\mu (\{x \in X:~\phi^N(x) \in E,~(\beta\circ\phi)^N(x) = \zeta\}) }{\mu (\{x \in X:~(\beta\circ\phi)^N(x) = \zeta\}) }.$$
Also, $\omega_{\beta,i}$ converges to $(\beta\circ \phi)^N_*\mu$. Therefore
$$\bh(\sK,\bX|\beta \circ \bX) = \limsup_{i\to\infty} \frac{H(\lambda_i) - H(\beta_*\lambda_i)}{m_i} \le \int \frac{H(\mu_{\zeta})}{N} ~d(\beta\circ \phi)^N_*\mu(\zeta) = \frac{H(\phi^N_*\mu) - H((\beta\circ \phi)^N_*\mu)}{N}.$$
The right hand side converges to $h(\bX|\beta \circ \bX)$ as $N$ tends to infinity. This proves the proposition.

\end{proof}

\section{Orbit equivalence and entropy}\label{sec:oe} 
In this section, we prove Theorem \ref{thm:main} by generalizing a theorem of Rudolph and Weiss which is explained next.


\begin{defn}
Let $G, \Gamma$ be countable discrete groups and let $(X,\cB,\mu)$ be a standard probability space. Let $G \cc^T (X,\cB,\mu)$ and $\Gamma \cc^S (X,\cB,\mu)$ be two probability measure preserving actions with the same orbits. We assume that both actions are essentially free. Let $\rho:\Gamma \times X\to G$ be the cocycle
$$\rho(\gamma,x) := g \Leftrightarrow T_{g}(x) = S_{\gamma}(x).$$
If $\cA \subset \cB$ is a sub-$\sigma$-algebra such that $\rho(\gamma,\cdot)$ is $\cA$-measurable for all $\gamma \in \Gamma$ then the {\em orbit change from $T$ to $S$} is said to be $\cA$-measurable. The smallest such $\sigma$-algebra is called the {\em orbit change $\sigma$-algebra}. 
\end{defn}

The next theorem is proven in [RW00].
\begin{thm}\label{thm:RW}
Suppose $T$ is an essentially free ergodic action of a countable discrete amenable group $G$ and 
$\cA$ is a $T$-invariant sub-$\sigma$-algebra. Suppose also 
that $S$ is essentially free action of $\Gamma$ with the same orbits 
as $T$ (this implies $\Gamma$ is amenable and $S$ is ergodic). Suppose the orbit change from $T$ to $S$ is 
$\cA$-measurable. Then for any finite observable $\phi:X\to A$ we conclude 
$$h(T, \phi |\cA) = h(S, \phi |\cA).$$
\end{thm}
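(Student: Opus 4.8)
I would deduce Theorem \ref{thm:RW} from the Ornstein--Weiss theory of relative entropy for amenable actions, combined with a ``transport'' through the orbit cocycle and a quasi-tiling argument. The first observation is that the roles of $T$ and $S$ are symmetric: for fixed $g\in G$, $\gamma\in\Gamma$ one has $\{x:\rho(\gamma,x)=g\}\in\cA$, and since $\gamma\mapsto\rho(\gamma,x)$ is a bijection of $\Gamma$ onto the orbit (both actions are essentially free), the inverse cocycle $\rho'(g,x)$, defined by $S_{\rho'(g,x)}x=T_gx$, is also $\cA$-measurable. Hence the orbit change from $S$ to $T$ is $\cA$-measurable too, and it suffices to prove the single inequality $h(S,\phi\mid\cA)\le h(T,\phi\mid\cA)$.

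Write $P$ for the finite partition generated by $\phi$. The second ingredient is the description of relative entropy by Følner averages of conditional Shannon entropy: if $\{F_n\}$ is a Følner sequence for $G$ and $P_T^{F}:=\bigvee_{g\in F}T_g^{-1}P$, then $h(T,\phi\mid\cA)=\lim_n|F_n|^{-1}H_\mu(P_T^{F_n}\mid\cA)$, together with the quantitative Ornstein--Weiss fact that for each $\epsilon>0$ there are $K\subset G$ finite and $\delta>0$ so that every $(K,\delta)$-invariant finite $F\subset G$ has $\big||F|^{-1}H_\mu(P_T^{F}\mid\cA)-h(T,\phi\mid\cA)\big|<\epsilon$; likewise for $S$, $\Gamma$ with a Følner sequence $\{E_m\}$. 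The third ingredient is the transport identity. Disintegrating $\mu=\int\mu^x_\cA\,d\mu(x)$ over $\cA$, on the $\cA$-fibre through a.e.\ $x$ the cocycle is constant, so $S_\gamma$ acts there exactly as $T_{\rho(\gamma,x)}$. Thus, putting $W_E(x):=\{\rho(\gamma,x):\gamma\in E\}\subset G$, the atoms of $P_S^{E}$ and of $P_T^{W_E(x)}$ coincide inside each $\cA$-fibre, whence $H_\mu(P_S^{E}\mid\cA)=\int H_{\mu^x_\cA}\!\big(P_T^{W_E(x)}\big)\,d\mu(x)$; and essential freeness gives $|W_E(x)|=|E|$ for a.e.\ $x$.

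\textbf{The main obstacle.} It remains to bound the integral above by $\big(h(T,\phi\mid\cA)+\epsilon\big)|E_m|$ once $E_m$ is sufficiently $\Gamma$-invariant. The difficulty, which is the technical core of [RW00], is that $\Gamma$-invariance of $E_m$ gives essentially no control on the \emph{shape} of $W_{E_m}(x)$ inside $G$: the cocycle can scatter a $\Gamma$-Følner set into a highly non-Følner subset of the orbit, so one cannot simply invoke the Ornstein--Weiss formula for $T$ along the sets $W_{E_m}(x)$. The resolution is the Ornstein--Weiss quasi-tiling machinery applied to the orbit equivalence relation itself (a Rokhlin-lemma/tower construction compatible with the relation, using amenability of $G$ and essential freeness): one obtains a measurable assignment $x\mapsto\{g_{x,1}F,\dots,g_{x,N_x}F\}$ of pairwise disjoint left translates of one highly $G$-invariant Følner set $F$, each contained in $W_{E_m}(x)$, covering all but an $\epsilon$-fraction of $W_{E_m}(x)$, with $N_x\le(1+\epsilon)|E_m|/|F|$.

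\textbf{Conclusion.} Given such a tiling, subadditivity of entropy along the cover gives $H_{\mu^x_\cA}\!\big(P_T^{W_{E_m}(x)}\big)\le\sum_{j=1}^{N_x}H_{\mu^x_\cA}\!\big(P_T^{g_{x,j}F}\big)+\epsilon|E_m|\log|A|$. Integrating over $x$ and using that $\cA$ is $G$-invariant, so that $\int H_{\mu^x_\cA}(P_T^{gF})\,d\mu(x)=H_\mu(P_T^{F}\mid\cA)$ for every translate $g$, together with measurability of the tiling and the bound on $N_x$, yields $H_\mu(P_S^{E_m}\mid\cA)\le(1+\epsilon)\tfrac{|E_m|}{|F|}H_\mu(P_T^{F}\mid\cA)+\epsilon|E_m|\log|A|\le\big(h(T,\phi\mid\cA)+\epsilon'\big)|E_m|$, where $\epsilon'\to0$ as $\epsilon\to0$ and $F$ is taken more invariant. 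Dividing by $|E_m|$, letting $m\to\infty$ and then $\epsilon\to0$ gives $h(S,\phi\mid\cA)\le h(T,\phi\mid\cA)$, and the symmetric argument finishes the proof. The one genuinely delicate point throughout is the measurable choice of the quasi-tiling of $W_{E_m}(x)$ in $x$ and the control of the expected tile-count; this is exactly where the elaboration of the Ornstein--Weiss covering lemmas in [RW00] is needed.
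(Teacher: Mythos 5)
The paper does not actually prove this statement: it is quoted as Theorem \ref{thm:RW} and attributed to [RW00], so your proposal has to stand on its own. Its preliminary steps are fine (the symmetry of the hypothesis, the $S$-invariance of $\cA$, the fibrewise identity $H_\mu(P_S^{E}\mid\cA)=\int H_{\mu^x_\cA}(P_T^{W_E(x)})\,d\mu(x)$ with $|W_E(x)|=|E|$), and you correctly isolate the main obstacle. But the step you offer to resolve it is a genuine gap: the claimed quasi-tiling of $W_{E_m}(x)$ by pairwise disjoint translates $g_{x,j}F$ of a single highly invariant F{\o}lner set $F$ of $G$, \emph{each contained in} $W_{E_m}(x)$ and covering all but an $\epsilon$-fraction of it, is not something the Ornstein--Weiss machinery provides, and as a general statement it is false. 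The set $W_{E_m}(x)=\rho(E_m,x)$ is just the image of a $\Gamma$-F{\o}lner set under the orbit bijection, and for a wild (but still $\cA$-measurable, e.g.\ measurable with respect to the orbit-change $\sigma$-algebra itself) rearrangement it can be so scattered that it contains no translate of $F$ at all once $|F|$ is large; an Ornstein--Weiss Rokhlin tower for the $G$-action tiles the whole orbit (equivalently, tiles $G$ in the coordinates based at $x$) by F{\o}lner translates, but those tiles need not lie inside $W_{E_m}(x)$ --- generically every tile straddles its ``boundary'' and meets it in a sparse set, so the tiles contained in $W_{E_m}(x)$ may cover essentially none of it. Consequently the subadditivity computation in your concluding paragraph has nothing to rest on.

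The deeper point is that no argument which uses only the cardinality $|W_{E_m}(x)|=|E_m|$ together with a generic covering can work: Shannon entropy of $\bigvee_{g\in W}T_g^{-1}P$ over a scattered set $W$ of size $k$ can grow like $ck$ even when $h(T,P\mid\cA)=0$ (this is the phenomenon of positive Kushnirenko sequence entropy for zero-entropy weakly mixing systems). So the $\cA$-measurability of the cocycle and the fact that $S$ is a genuine group action with the same orbits must enter the estimate in an essential way --- one has to compare counts of $(S,E_m)$-names and $(T,\cdot)$-names \emph{conditioned on} $\cA$, via a relative Shannon--McMillan/covering-number argument in which the asymptotic invariance of $E_m$ in $\Gamma$ is transferred through the cocycle in an averaged sense. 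That transfer is precisely the technical core of [RW00]; your sketch acknowledges the difficulty but then asserts a covering lemma that begs it. To repair the write-up you would either have to prove a correct substitute for that lemma (essentially redoing Rudolph--Weiss) or, as the paper does, invoke [RW00] directly. A secondary, smaller point: with the paper's infimum-based definition of $h_\mu(\phi\mid\cA)$, the identification with $\lim_n|F_n|^{-1}H_\mu(P_T^{F_n}\mid\cA)$ that you use as your second ingredient also deserves a reference or a short argument.
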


The rest of the paper is devoted to proving a related result:
\begin{prop}\label{prop:orbit}
Let $G$ be an amenable group, $\bX=(T,X,\mu,\phi)$ an essentially free $G$-process with finite range $A$ and $S:(X,\mu)\to (X,\mu)$ be an essentially free measure-preserving Borel automorphism with the same orbits as $T$ (i.e., for $\mu$-a.e. $x\in X$, $\{T_gx:~g\in G\} = \{S^nx:~n\in \Z\}$).

Let $\beta:A \to B$ be a map and suppose the orbit change from $T$ to $S$ is measurable with respect to both the $T$-invariant sub-sigma-algebra generated by $\psi:=\beta\circ\phi$ and the $S$-invariant sub-sigma-algebra generated by $\psi$.  Then for any random sofic approximation $\sK$ to $G$,
$$\bh(\sK,\bX|\beta \circ \bX)= h(\sK, \bX| \beta \circ \bX  )  = h(S,\phi | \beta\circ \phi).$$
\end{prop}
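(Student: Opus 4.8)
\medskip
\noindent\textbf{Proof plan.} Write $\psi=\beta\circ\phi$ and let $\bX^S:=(S,X,\mu,\phi)$, a $\Z$-process with observable $\phi$; then $\beta\circ\bX^S=(S,X,\mu,\psi)$, and by the Abramov--Rohlin formula [WZ92] $h(\bX^S|\beta\circ\bX^S)=h(S,\phi|\psi)$. The idea is to transfer sofic models of $\bX$ over $G$ to sofic models of $\bX^S$ over $\Z$ and vice versa, using the orbit change as a ``local code'', and then to quote Proposition \ref{prop:Zrelative} on the $\Z$-side. As a preliminary I would verify that the $T$-invariant and $S$-invariant $\sigma$-algebras generated by $\psi$ agree -- call the common algebra $\cA$ -- and that both cocycles $\rho\colon\Z\times X\to G$ and $\bar\rho\colon G\times X\to\Z$ (defined by $T_{\rho(n,x)}x=S^nx$ and $S^{\bar\rho(g,x)}x=T_gx$) are $\cA$-measurable. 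Indeed $\psi(T_gx)=\psi(S^nx)$ on $\{\rho(n,x)=g\}$ (and symmetrically), so the hypothesis that each $\rho(n,\cdot)$ is measurable with respect to both $\cA_T$ and $\cA_S$ forces $\cA_T=\cA_S=\cA$, and then $\bar\rho(g,x)=\sum_n n\,\mathbf 1_{\{\rho(n,x)=g\}}$ is $\cA$-measurable. Hence, for each $n$ (resp.\ $g$) and $\epsilon>0$ there is a finite window in $G$ (resp.\ in $\Z$) and a map computing $\rho(n,\cdot)$ (resp.\ $\bar\rho(g,\cdot)$), off a set of measure $<\epsilon$, from the $\psi$-values in that window. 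These local codings are the only use of the hypothesis.

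For $\bh(\sK,\bX|\beta\circ\bX)\le h(S,\phi|\psi)$, take $\{\bX_j\}$ adapted to a subsequence of $\sK$ with $\lim_j\bX_j=\bX$ and $\limsup_j\bigl(h(\bX_j)-h(\beta\circ\bX_j)\bigr)=\bh(\sK,\bX|\beta\circ\bX)$, built from measures $\eta_j$ on $\sym(m_j)^G\times A^{[m_j]}$. Using the local coding of $\rho(1,\cdot)$, define for $(\sigma,\xi)$ a map $p\mapsto\sigma(g)p$ with $g$ the code read off the pattern $(\xi(\sigma(w)p))_w$, and repair it on an $o(m_j)$-set to a permutation $s_j[\sigma,\xi]$; note $s_j$ depends on $\xi$ only through $\beta\xi$. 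Let $\eta'_j$ be the pushforward of $\eta_j$ under $(\sigma,\xi)\mapsto\bigl(n\mapsto s_j[\sigma,\xi]^n,\ \xi\bigr)$ and $\bY_j$ the approximate $\Z$-process it defines. Then (i) the laws of $n\mapsto s_j^n$ form a random sofic approximation $\sL$ of $\Z$ (near-homomorphism from the cocycle identity together with $\sigma$ being a near-homomorphism, and asymptotic freeness from essential freeness of $S$); (ii) $\lim_j\bY_j=\bX^S$, since on a $1-o(1)$ set $s_j^np\approx\sigma(\text{code of }\rho(n,\cdot))p$ while $\phi(S^nx)=\phi(T_{\rho(n,x)}x)$, so the local statistics of $\bY_j$ are carried from those of $\bX_j$ to those of $\bX^S$ (Lemma \ref{lem:composition}); (iii) $h(\bY_j|\beta\circ\bY_j)\ge h(\bX_j|\beta\circ\bX_j)$, because writing each relative entropy as the averaged conditional Shannon entropy of $\xi$ given the pair (sofic datum, $\beta\xi$), the conditional law of $\xi$ given $(n\mapsto s_j^n,\beta\xi=\gamma)$ is a mixture, over $\sigma$, of the conditional laws of $\xi$ given $(\sigma,\beta\xi=\gamma)$, and $H$ is concave. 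Now (i)--(iii) and Proposition \ref{prop:Zrelative} (which gives $\bh(\sL,\bX^S|\beta\circ\bX^S)=h(\bX^S|\beta\circ\bX^S)$) give $\bh(\sK,\bX|\beta\circ\bX)\le\limsup_j\bigl(h(\bY_j)-h(\beta\circ\bY_j)\bigr)\le h(S,\phi|\psi)$.

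For $h(\sK,\bX|\beta\circ\bX)\ge h(S,\phi|\psi)$, run the construction in reverse. Fix a random sofic approximation $\sL$ of $\Z$ with the same sizes as $\sK$; by Proposition \ref{prop:Zrelative} there is a sequence $\{\bY_j\}$ adapted to a subsequence of $\sL$ with $\lim_j\bY_j=\bX^S$ \emph{strongly} and $\lim_j\bigl(h(\bY_j)-h(\beta\circ\bY_j)\bigr)=h(S,\phi|\psi)$, built from measures $\eta'_j$ on $\sym(m_j)^\Z\times A^{[m_j]}$. Using the local coding of $\bar\rho(g,\cdot)$, turn each model $(\sigma',\xi)$ into a map $\sigma_j(g)\colon p\mapsto(\sigma'(1))^{c(g,p)}p$, with $c(g,p)$ the code read off $(\xi(\sigma'(k)p))_k$, repaired off an $o(m_j)$-set so that $\sigma_j$ is a near-homomorphism of $G$ with each $\sigma_j(g)$ a permutation; push $\eta'_j$ forward under $(\sigma',\xi)\mapsto(\sigma_j,\xi)$ to get approximate $G$-processes $\bX_j$. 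As before, the laws of $\sigma_j$ form a random sofic approximation $\sK''$ of $G$ (here essential freeness of $T$ supplies the required injectivity), $\lim_j\bX_j=\bX$ \emph{strongly} -- strong convergence of $\bY_j$ is what guarantees that reading $\bar\rho$ off the empirical $\psi$-patterns reproduces $\bar\rho$ with probability $\to1$, so that strong convergence transfers -- and $h(\bX_j|\beta\circ\bX_j)\ge h(\bY_j|\beta\circ\bY_j)$ by the same concavity argument. Thus $h(\sK'',\bX|\beta\circ\bX)\ge h(S,\phi|\psi)$; since $G$ is amenable and $\sK''$ has the same sizes as (a subsequence of) $\sK$, Theorem \ref{thm:Zasymptotic} makes them asymptotic, and Theorem \ref{thm:asymptotic2r} then gives $h(\sK,\bX|\beta\circ\bX)\ge h(\sK'',\bX|\beta\circ\bX)\ge h(S,\phi|\psi)$.

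Combining the two displayed chains with the trivial $\bh(\sK,\bX|\beta\circ\bX)\ge h(\sK,\bX|\beta\circ\bX)$ yields $h(S,\phi|\psi)\le h(\sK,\bX|\beta\circ\bX)\le\bh(\sK,\bX|\beta\circ\bX)\le h(S,\phi|\psi)$, so all three coincide. The heart of the argument is the pair of transfer constructions; the work is in checking that feeding the empirical $\psi$-patterns of a sofic model through the local coding of the orbit-change cocycle produces (a) a genuine sofic approximation of the target group and (b) a sofic model of the target process in the correct mode of convergence (strong, in the second construction), and in managing how the $o(m_j)$ ``repair'' sets together with the choices of windows and truncations accumulate. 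The entropy comparison (c) is then an easy consequence of concavity of $H$.
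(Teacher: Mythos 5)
Your plan follows the paper's own proof essentially step for step: finite-window codings of the orbit-change cocycle (read off the $\beta\xi$-patterns, as your ``depends on $\xi$ only through $\beta\xi$'' remark requires) are used to transport adapted sequences between the $G$-side and the $\Z$-side, the relative-entropy monotonicity is exactly the paper's coarsening/concavity argument, Proposition \ref{prop:Zrelative} is invoked on the $\Z$-side in both directions (with strong convergence reserved for the lower-entropy direction), and the return to $\sK$ uses Theorems \ref{thm:Zasymptotic} and \ref{thm:asymptotic2r}. The only real difference is cosmetic: in the forward direction you code only the generator of $\Z$ and take powers (legitimate since $\Gamma=\Z$, and it makes multiplicativity automatic), whereas the paper codes every group element and verifies an asymptotic cocycle identity; the remaining verifications you defer are precisely the paper's Lemmas \ref{lem:rho}--\ref{lem:forward}.
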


Given the proposition above,  we prove:
\begin{thm}
Let $G$ be a countably infinite amenable group with random sofic approximation $\sK$. Let $\bX=(T,X,\mu,\phi)$ be a $G$-process. Then
$$h(\sK,\bX)=\bh(\sK,\bX)=h(\bX).$$
\end{thm}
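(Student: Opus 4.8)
The plan is to deduce the theorem from Proposition~\ref{prop:orbit}, the Rudolph--Weiss Theorem~\ref{thm:RW}, the product formulas of Section~\ref{sec:relative}, and the Ornstein--Weiss/Connes--Feldman--Weiss orbit-equivalence theorem; by Corollary~\ref{cor:bhinequality} it suffices to squeeze $h(\bX)\le h(\sK,\bX)\le\bh(\sK,\bX)\le h(\bX)$. Write $\bX=(T,X,\mu,\phi)$ with $\phi\colon X\to A$. First I would fix a nontrivial finite probability space $(\Omega,\omega)$, let $\bB=(T,\Omega^G,\omega^G,\phi_\Omega)$ be the Bernoulli $G$-process with base $(\Omega,\omega)$ ($\phi_\Omega$ evaluation at the identity), and pass to the product process $\bX\times\bB$ on $(X\times\Omega^G,\mu\times\omega^G)$ with full observable $\Phi=\phi\times\phi_\Omega$. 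By Lemma~\ref{lem:bhproduct2} (or Lemma~\ref{lem:bhproduct3}) one has $h(\sK,\bX\times\bB|\bB)=h(\sK,\bX)$ and $\bh(\sK,\bX\times\bB|\bB)=\bh(\sK,\bX)$, while classically $h(\bX\times\bB|\bB)=h(\bX)$ by the Abramov--Rohlin formula together with $h(\bX\times\bB)=h(\bX)+H(\omega)$ and $h(\bB)=H(\omega)$. Since the Bernoulli factor is essentially free, so is the $G$-action on $X\times\Omega^G$. Thus it suffices to identify the three relative quantities $h(\sK,\bX\times\bB|\bB)$, $\bh(\sK,\bX\times\bB|\bB)$ and $h(\bX\times\bB|\bB)$.

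Next I would invoke orbit equivalence: since $G$ is amenable and the $G$-action on $X\times\Omega^G$ is essentially free, its orbit equivalence relation is hyperfinite, so there is an essentially free measure-preserving $\Z$-action $S$ on $X\times\Omega^G$ with the same orbits as $T$. The crucial requirement --- and the step I expect to be the main obstacle --- is to choose $S$ so that the orbit change $\sigma$-algebra from $T$ to $S$ is exactly the Bernoulli factor $\Omega^G$ and is generated, both as a $T$-invariant and as an $S$-invariant $\sigma$-algebra, by the single finite observable $\phi_\Omega$; call this common $\sigma$-algebra $\cF$. This is precisely where the ``extra room'' supplied by the Bernoulli factor is used: building $S$ by an Ornstein--Weiss-style Rokhlin-tower exhaustion of the $T$-orbits, one records the combinatorics of the exhaustion --- equivalently, the $G$-valued orbit cocycle, read over longer and longer $S$-windows --- inside the $\Omega^G$-coordinate, so that with $(\Omega,\omega)$ chosen suitably this coding accounts for precisely the factor $\Omega^G$. (Ergodicity of $T$ may be assumed componentwise, since $T$ and $S$ have the same orbits and hence the same ergodic decomposition; and the orbit change $\sigma$-algebra is automatically $T$- and $S$-invariant by the cocycle identity and essential freeness.)

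Finally I would assemble the pieces. With $S$ and $\cF$ as above, $\phi_\Omega$ is a factor of $\Phi$ and the orbit change from $T$ to $S$ is measurable with respect to $\langle\phi_\Omega\rangle_T=\langle\phi_\Omega\rangle_S=\cF$, so Proposition~\ref{prop:orbit} applies to the essentially free process $\bX\times\bB$ and gives
$$h(\sK,\bX\times\bB|\bB)=\bh(\sK,\bX\times\bB|\bB)=h(S,\Phi|\phi_\Omega).$$
Applying Theorem~\ref{thm:RW} on each ergodic component yields $h(S,\Phi|\cF)=h(T,\Phi|\cF)$, and since $\langle\phi_\Omega\rangle_S=\langle\phi_\Omega\rangle_T=\cF$ is a $T$-invariant sub-$\sigma$-algebra with respect to which the orbit change is measurable, this reads $h(S,\Phi|\phi_\Omega)=h(T,\Phi|\phi_\Omega)=h(\bX\times\bB|\bB)=h(\bX)$. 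Combining with the reduction of the first paragraph,
$$h(\sK,\bX)=h(\sK,\bX\times\bB|\bB)=h(S,\Phi|\phi_\Omega)=h(\bX),$$
and the identical computation with $\bh$ on the left gives $\bh(\sK,\bX)=h(\bX)$, consistently with Corollary~\ref{cor:bhinequality}. This would complete the proof.
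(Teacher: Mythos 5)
Your overall strategy is the paper's strategy (adjoin a Bernoulli factor, replace the $G$-action by an orbit-equivalent $\Z$-action whose orbit change is measurable with respect to a factor generated by a finite observable, then apply Proposition \ref{prop:orbit} together with Rudolph--Weiss on ergodic components), but the step you yourself flag as the main obstacle is a genuine gap, and the way you propose to fill it does not work as stated. You need a $\Z$-action $S$ on $X\times\Omega^G$ whose orbit-change cocycle is measurable with respect to the \emph{$S$-invariant} $\sigma$-algebra generated by the fixed Bernoulli coordinate $\phi_\Omega$ (your $\cF$). Hyperfiniteness/CFW produces some generating automorphism of the orbit relation, but gives no control whatsoever over which $\sigma$-algebra $\phi_\Omega$ generates under it. Your proposed remedy --- ``record the combinatorics of the Rokhlin-tower exhaustion inside the $\Omega^G$-coordinate, with $(\Omega,\omega)$ chosen suitably'' --- is not a construction: the $\Omega^G$-coordinate is an i.i.d.\ process with a prescribed law, so you are not free to write the tower data into it; at best you could hope the given random labels happen to code your exhaustion, and proving that is precisely the hard content you have omitted. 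Moreover, demanding that the orbit-change $\sigma$-algebra be \emph{exactly} the Bernoulli factor and that $\phi_\Omega$ generate it under $S$ is stronger than what Proposition \ref{prop:orbit} needs, and is not known to be achievable directly.

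The paper closes this gap by a different device, which is why your outline cannot simply cite its lemmas as stated. It applies Dye/CFW only on the Bernoulli factor $(Y,\nu)=(\Omega^G,\omega^G)$ to get an automorphism $U$ with the same orbits as the Bernoulli $G$-action, chooses a finite observable $\psi''$ generating under $U$, and enlarges the conditioning observable to $\psi=(\psi',\psi'')$, so that measurability of the cocycle with respect to the $\sigma$-algebra generated by $\psi$ under \emph{both} actions is automatic; the $\Z$-action on the product is then the lift $V(x,y)=(T_gx,S_gy)$ where $Uy=S_gy$. The price is that the conditioning observable is now a refinement of the Bernoulli coordinate ($\phi_\Omega=\beta\circ\psi$ with $\beta$ the projection), and this is exactly what Lemma \ref{lem:bhproduct3} --- not just Lemma \ref{lem:bhproduct2} --- is proved for: it gives $h_{\mu\times\nu}(\sK,\phi\times\psi\,|\,\psi)=h_\mu(\sK,\phi)$ (and the same for $\bh$) despite the finer conditioning observable. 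Without either your unproven coding claim or this Lemma \ref{lem:bhproduct3} mechanism, the chain $h(\sK,\bX)=h(\sK,\bX\times\bB|\cdot)=h(S,\Phi|\cdot)=h(\bX)$ does not close. (Your parenthetical treatment of the non-ergodic case via the ergodic decomposition is fine, provided you note, as the paper does, that weak mixing of the Bernoulli factor makes the ergodic components of $\mu\times\omega^G$ of the form $\lambda\times\omega^G$.)
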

Of course, this implies Theorem \ref{thm:main}.

\begin{proof}
We will prove the statement for lower-sofic entropy only, the upper-sofic entropy case is similar. Let $\bY=(S,Y,\nu,\psi')$ be a Bernoulli process over $G$ with base $(B',\omega)$ where $B'$ is a finite set and $\omega$ is not supported on a singleton. This process is weakly mixing and $\psi':Y \to B'$ is generating. By [Dy59, Dy63, CFW81], any ergodic essentially free probability measure preserving action of $\Z$ is orbit-equivalent to $(S,Y,\nu)$. So there exists a weakly mixing automorphism $U:(Y,\nu) \to (Y,\nu)$ with the same orbits as $S$ and a Borel map $\psi'':Y \to B''$ to a finite set $B''$ which generates in the sense that the smallest $U$-invariant sigma-algebra on which $\psi''$ is measurable is the Borel sigma algebra of $(Y,\nu)$ (up to measure zero sets).  

 Let $\psi:Y \to B=B'\times B''$ be the map $\psi(y)=(\psi'(y),\psi''(y))$. Note that the Borel sigma-algebra of $(Y,\nu)$ is the smallest sigma-algebra generated by $\psi$ and the $G$-action (modulo measure 0 sets) which is the smallest sigma-algebra generated by $\psi$ and $U$ (modulo measure 0 sets).

Define $V:X \times Y \to X \times Y$ by
$$V(x,y) = (T_gx, S_gy) \Leftrightarrow Uy=S_gy.$$
Note that the orbit change from $T\times S$ to $V$ is measurable with respect to both the $G$-invariant sub-$\sigma$-algebra generated by $\psi$ and the $V$-invariant sub-$\sigma$-algebra generated by $\psi$. Proposition \ref{prop:orbit} and Lemma \ref{lem:bhproduct3} imply 
\begin{eqnarray}\label{eqn:2}
h_{\mu}(\sK, \phi) = h_{\mu \times \nu}(\sK,  \phi\times \psi | \psi)= h_{\mu \times \nu}(V, \phi \times \psi |\psi).
\end{eqnarray}

Let $\mu = \int \lambda ~d\zeta(\lambda)$ be the ergodic decomposition of $\mu$. Because $\bY$ is weakly mixing,
$$\mu\times \nu = \int \lambda \times \nu ~d\zeta(\lambda)$$ is the ergodic decomposition of $\mu\times \nu$.  It is well-known that the classical entropy of a process equals the integral of the entropies of its ergodic components. So Theorem \ref{thm:RW} implies
\begin{eqnarray*}
h_{\mu \times \nu}(V, \phi \times \psi |\psi)&=& \int h_{\lambda \times \nu}(V, \phi \times \psi |\psi)~d\zeta(\lambda)\\
&=&\int h_{\lambda \times \nu}(T \times S, \phi \times \psi|\psi) ~d\zeta(\lambda)\\
&=& h_{\mu \times \nu}(T \times S, \phi \times \psi|\psi)=h_\mu(T,\phi).
\end{eqnarray*}
So (\ref{eqn:2}) implies $h_{\mu}(\sK, \phi)  = h_\mu(T,\phi)$ as required.
 \end{proof}

\subsection{Lifting factors}\label{sec:proof} 

Proposition \ref{prop:orbit} is proven by ``lifting'' the orbit-equivalence to sofic approximations. But first, we ``lift'' factors that do not necessarily come from composing with a map $\beta:A \to B$. 

\begin{defn}
Let $\{\bX_i\}_{i=1}^\infty$ be a sequence of approximate processes over $G$ constructed from a sequence $\{\eta_i\}_{i=1}^\infty$ of probability measures on $\sym(m_i)^G \times A^{[m_i]}$. If $W \subset G$ is finite, $\sigma\in \sym(m_i)^G$ and $\xi \in A^{[m_i]}$, then let 
$$\xi[\sigma,W] \in (A^W)^{[m_i]}, ~~ \xi[\sigma,W](p)(w) := \xi\big(\sigma(w)p\big).$$
Let $\eta_i^W$ be the measure on $\sym(m_i)^G \times (A^W)^{[m_i]}$ obtained by pushing forward $\eta_i$ under the map 
$$(\sigma,\xi) \in \sym(m_i)^G \times A^{[m_i]} \mapsto (\sigma,\xi[\sigma,W]).$$
 Let $\{\bX^W_i\}_{i=1}^\infty$  denote the sequence of approximate processes constructed from $\eta_i^W$.
\end{defn}
The next lemma is immediate.
\begin{lem}\label{lem:W}
If, in the definition above, $\lim_{i\to\infty} \bX_i = \bX=(T,X,\mu,\phi)$ then $\lim_{i\to\infty} \bX_i^W = (T,X,\mu,\phi^W)$.
\end{lem}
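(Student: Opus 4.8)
The final statement to prove is Lemma \ref{lem:W}, which asserts that the ``window-lifted'' approximate processes $\bX_i^W$ converge to the process $(T,X,\mu,\phi^W)$ whenever $\bX_i \to \bX = (T,X,\mu,\phi)$. As the paper itself remarks (``The next lemma is immediate''), this is a definition-chasing argument, so my plan is simply to unwind both sides of the claimed convergence and match the local statistics.

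\medskip

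\noindent\textbf{The plan.} Fix a finite set $V \subset G$; by Definition \ref{defn:localstatistics} and the topology on $\cP(G,A^W)$, it suffices to show $d_V(\bX_i^W, (T,X,\mu,\phi^W)) \to 0$. First I would compute the $V$-local statistics of the target process $(T,X,\mu,\phi^W)$. Writing $(\phi^W)^V_T : X \to (A^W)^V$, we have $(\phi^W)^V_T(x)(v)(w) = \phi(T_w T_v x) = \phi(T_{wv}x)$, so $(\phi^W)^V_T$ is the composition of $\phi^{WV}_T : X \to A^{WV}$ with the obvious reindexing map $r : A^{WV} \to (A^W)^V$ given by $r(\xi)(v)(w) = \xi(wv)$ (using $T_w T_v = T_{wv}$, which holds since $\bX$ is an honest process). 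Hence the $V$-local statistics of $(T,X,\mu,\phi^W)$ is $r_*$ of the $WV$-local statistics of $\bX$.

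\medskip

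\noindent\textbf{Key steps on the sofic side.} Next I would do the analogous computation for $\bX_i^W$, which is constructed from $\eta_i^W$ as in Definition \ref{defn:eta}. Unwinding $\chi_i$ and $\hsigma_i$, the value of the local observable at a point $(\sigma,\xi[\sigma,W],p)$ evaluated at $v \in V$, $w \in W$ is $\xi[\sigma,W](\sigma(v)p)(w) = \xi(\sigma(w)\sigma(v)p)$. Here is the one genuinely non-formal point: this equals $\xi(\sigma(wv)p)$ only when $\sigma(w)\sigma(v)p = \sigma(wv)p$, which need not hold exactly but does hold for a $(1-o(1))$-fraction of $p$ because $\sK$ (equivalently, the sequence $\eta_i$ realizing $\bX_i \to \bX$) is built on a (random) sofic approximation. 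So I would introduce, for each $i$, the ``good set'' $\cG_i(WV)$ of pairs $(\sigma,p)$ on which all the relevant multiplicativity relations $\sigma(w)\sigma(v)p = \sigma(wv)p$ (for $w \in W$, $v \in V$) hold; sofically, $\eta_i \times u_{m_i}(\cG_i(WV)) \to 1$. On the complement of $\cG_i(WV)$ the two measures differ by at most its (vanishing) mass. On $\cG_i(WV)$, the $V$-local statistics of $\bX_i^W$ restricted to the good set is exactly $r_*$ applied to the $WV$-local statistics of $\bX_i$ restricted to the good set. Since total variation is subadditive under this kind of restriction-plus-error decomposition, I get
$$ d_V(\bX_i^W, (T,X,\mu,\phi^W)) \le d_{WV}(\bX_i, \bX) + \varepsilon_i, $$
where $\varepsilon_i \to 0$ accounts for the mass of the complement of the good set. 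Letting $i \to \infty$ and using $d_{WV}(\bX_i,\bX) \to 0$ gives the result. For the ``strongly'' version (needed for the lower-sofic applications), the same bookkeeping applied to the strong-convergence condition in terms of $\eta_i$-mass of $\{d_{WV}((\sigma,\xi),\phi) < \epsilon\}$ transfers it to an analogous statement for $\eta_i^W$ and $\phi^W$, again up to the good-set error.

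\medskip

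\noindent\textbf{Main obstacle.} There is no serious obstacle; the only thing to be careful about is the bookkeeping around the good set $\cG_i(WV)$ — making sure that the multiplicativity failures are absorbed into a vanishing error and that one is comparing measures on the same space $(A^W)^V$ via the reindexing map $r$ correctly. This is exactly the same maneuver used repeatedly elsewhere in the paper (e.g., in the proof of Lemma \ref{lem:bhproduct2}), so it is routine, which is why the statement is flagged as immediate.
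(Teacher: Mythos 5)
Your argument is correct and is exactly the definition-chasing the paper treats as immediate: identify the $V$-local statistics of $\bX_i^W$ with the reindexed $WV$-local statistics of $\bX_i$ on the good set where $\sigma(w)\sigma(v)p=\sigma(wv)p$ for all $w\in W$, $v\in V$, and absorb the complement, whose $\eta_i\times u_{m_i}$-mass vanishes, into the error term. The only wording to adjust is the parenthetical ``equivalently'': approximate multiplicativity does not follow from $\lim_{i\to\infty}\bX_i=\bX$ alone (local statistics can converge even for badly non-multiplicative $\sigma$), but from the standing assumption of Definition \ref{defn:eta} that the first marginals of the $\eta_i$ form a random sofic approximation, which is what your good-set estimate actually uses.
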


Assume that $\bX=(T,X,\mu,\phi)$ is a process. Let $\psi:X \to B$ be a measurable map into a finite or countable set $B$. For each finite $W \subset G$, let $\psi_W:A^W \to B$ be a measurable function satisfying
\begin{eqnarray*}
\psi_W(\xi)=b &\Rightarrow& \mu\Big(\big\{x\in X:~\psi(x)=b \textrm{ and } \phi^W(x)=\xi \big\}\Big)\\
 &&\ge  \mu\Big(\big\{x\in X:~\psi(x)=c \textrm{ and } \phi^W(x)=\xi \big\}\Big)~\forall c \in B.
 \end{eqnarray*}
Warning: do not confuse $\psi_W:A^W \to B$ with $\psi^W:X \to B^W$. 
 
\begin{defn}
Let $\Lambda$ be a function on the set of finite subsets of $G$. We write $\lim_{W \to G} \Lambda(W)=L$ if for every increasing sequence $\{W_j\}_{j=1}^\infty \subset G$ with $\cup_j W_j=G$, $\lim_{j\to\infty} \Lambda(W_j)=L.$
\end{defn}

\begin{lem}\label{lem:factor}
Let $\bX=(T,X,\mu,\phi)$ be a $G$-process such that $\phi:X\to A$ is generating. Let $\{\bX_i\}_{i=1}^\infty$ be a sequence of approximate processes constructed from a sequence $\{\eta_i\}_{i=1}^\infty$ of probability measures on $\sym(m_i)^G\times A^{[m_i]}$. Suppose that $\lim_{i\to\infty} \bX_i = \bX$. Then
\begin{eqnarray*}
\lim_{i\to\infty} \psi_W \circ \bX_i^W  &=& (T,X,\mu, \psi_W\circ\phi^W),\\
\lim_{W \to G} (T,X,\mu, \psi_W\circ\phi^W) &=& (T,X,\mu,\psi).
\end{eqnarray*}
\end{lem}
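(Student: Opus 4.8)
\textbf{Proof plan for Lemma \ref{lem:factor}.}

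The first displayed limit is essentially a restatement of Lemma \ref{lem:composition} combined with Lemma \ref{lem:W}. Indeed, Lemma \ref{lem:W} gives $\lim_{i\to\infty}\bX_i^W=(T,X,\mu,\phi^W)$, and since $\psi_W:A^W\to B$ is just a map, Lemma \ref{lem:composition} (with $\beta=\psi_W$) yields $\lim_{i\to\infty}\psi_W\circ\bX_i^W=(T,X,\mu,\psi_W\circ\phi^W)$. So the first line requires only citing these two lemmas.

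The second line is the substantive part. I must show that as $W$ exhausts $G$, the processes $(T,X,\mu,\psi_W\circ\phi^W)$ converge to $(T,X,\mu,\psi)$ in $\cP(G,B)$. Since the topology on $\cP(G,B)$ is generated by the functions $d_V$ for finite $V\subset G$, it suffices to fix a finite $V\subset G$ and show $d_V\big((T,X,\mu,\psi_W\circ\phi^W),(T,X,\mu,\psi)\big)\to 0$ as $W\to G$. By definition of $d_V$ and the standard fact that total variation distance between pushforwards is bounded by the measure of the set where the defining maps disagree, it is enough to show
$$\lim_{W\to G}\mu\Big(\big\{x\in X:~\psi_W(\phi^W(x))\ne\psi(x)\big\}\Big)=0,$$
since this controls $\mu$ of the set where $(\psi_W\circ\phi^W)^V$ and $\psi^V$ disagree (a union over $v\in V$ of translates of the bad set, hence of measure at most $|V|$ times the above).

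The key step, and the main obstacle, is proving the displayed limit about the bad set. This is where the hypothesis that $\phi$ is \emph{generating} enters. Because the smallest $G$-invariant $\sigma$-algebra making $\phi$ measurable is all of $\cB$ (mod null sets), the function $\psi$ is measurable with respect to $\bigvee_{W}\sigma(\phi^W)$, where the join is over finite $W\subset G$ and $\{\sigma(\phi^W)\}$ is an increasing net of $\sigma$-algebras. By the martingale convergence theorem (or directly, by the fact that conditional expectations $\E[\mathbf 1_{\{\psi=b\}}\mid\sigma(\phi^W)]$ converge in $L^1$ to $\mathbf 1_{\{\psi=b\}}$ as $W\to G$), the best approximation to $\psi$ among $\sigma(\phi^W)$-measurable $B$-valued functions converges to $\psi$ in the sense that the probability of disagreement tends to $0$. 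The function $\psi_W\circ\phi^W$ is precisely (a version of) this best approximation: by the defining inequality for $\psi_W$, for each atom $\{\phi^W=\xi\}$ the value $\psi_W(\xi)$ is the most likely value of $\psi$ on that atom, which is exactly the Bayes-optimal predictor of $\psi$ given $\phi^W$. Hence $\mu(\psi_W\circ\phi^W\ne\psi)$ is no larger than $\mu(f_W\ne\psi)$ for any other $\sigma(\phi^W)$-measurable $f_W$, and choosing $f_W$ to be, say, a rounding of the conditional expectation shows the right-hand side tends to $0$; more simply, one checks directly that $1-\mu(\psi_W\circ\phi^W=\psi)=1-\int\max_{b}\mu(\{\psi=b\}\mid\phi^W)\,d\mu\to 1-\int\max_b\mathbf 1_{\{\psi=b\}}\,d\mu=0$ by dominated convergence applied to $\max_b\E[\mathbf 1_{\{\psi=b\}}\mid\sigma(\phi^{W_j})]$ along any exhausting sequence $W_j$, using that these converge a.e. (martingale convergence) to $\max_b\mathbf 1_{\{\psi=b\}}=1$. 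Assembling these pieces gives $d_V\to 0$ for every finite $V$, which is the desired convergence in $\cP(G,B)$.
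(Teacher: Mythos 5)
Your proposal is correct and follows the same route as the paper: the first limit is exactly the paper's citation of Lemma \ref{lem:W} together with Lemma \ref{lem:composition}, and the second limit is what the paper dismisses in one line as ``a consequence of the fact that $\phi$ is generating.'' Your martingale/most-likely-value argument (that $\psi_W\circ\phi^W$ is the Bayes-optimal $\sigma(\phi^W)$-measurable predictor of $\psi$, so $\mu(\psi_W\circ\phi^W\ne\psi)\to 0$ along any exhausting sequence, hence $d_V\to 0$ for each finite $V$) is precisely the standard filling-in of that omitted step, and it is sound.
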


\begin{proof}
The first limit follows from the previous lemma and Lemma \ref{lem:composition}. The second limit is a consequence of the fact that $\phi$ is generating. 
\end{proof}

\subsection{Lifting orbit-equivalences}\label{sec:lift} 
The concepts of the previous subsection are used to `lift' orbit-equivalences as follows. Let $\bX=(T,X,\mu,\phi)$ be a process over $G$ with range $A$. Suppose $\Gamma$ is a (possibly different) group and $\bY=(S,X,\mu,\phi)$ is a process over $\Gamma$ with the same orbits as $G$ (up to $\mu$-measure zero). Suppose there is a map $\beta:A \to B$ such that the orbit change from $T$ to $S$ is measurable with respect to the $G$-invariant sub-$\sigma$-algebra generated by $\psi:=\beta \circ \phi$ and that both $\bX$ and $\bY$ are essentially free (e.g., for a.e. $x\in X$ $T_g x \ne T_h x$ if $g\ne h$). Define the cocycle $\rho:\Gamma \times X \to G$ by
$$\rho(\gamma,x)= g \Leftrightarrow S_\gamma x = T_gx.$$

\begin{lem}\label{lem:rho}
For each finite set $W \subset G$ there exists a map $\rho_W:\Gamma \times B^W \to G$ such that for every $\gamma \in \Gamma$ and $g\in G$,
$$\lim_{W\to G} \mu\Big( \big\{x \in X:~ \rho(\gamma, x)=g\big\} \Delta \big\{x \in X:~\rho_W(\gamma, \psi^W(x))=g\big\} \Big) =0.$$
\end{lem}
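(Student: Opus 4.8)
The plan is to show that the cocycle $\rho(\gamma,\cdot)$ is, by hypothesis, measurable with respect to the $G$-invariant $\sigma$-algebra $\cF$ generated by $\psi = \beta\circ\phi$, and to exploit the fact that $\cF$ is generated as a $\sigma$-algebra by the countable collection of functions $\{\psi^W : W \subset G \text{ finite}\}$ together with the fact that $\psi^W$ factors through $\phi^W$. Indeed, for $g\in G$ and $\gamma\in\Gamma$ the set $E_{\gamma,g}:=\{x:\rho(\gamma,x)=g\}$ lies in $\cF$ (up to measure zero). Since $\psi$ is $\cF$-generating in the sense that $\cF$ is the smallest $G$-invariant $\sigma$-algebra for which $\psi$ is measurable, the sets $(\psi^W)^{-1}(C)$ for finite $W\subset G$ and $C\subset B^W$ generate $\cF$, and hence for any increasing exhaustion $W_1\subset W_2\subset\cdots$ with $\bigcup_j W_j = G$ we have $E_{\gamma,g}$ in the $\sigma$-algebra generated by $\bigcup_j (\psi^{W_j})^{-1}(2^{B^{W_j}})$. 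By the standard martingale/approximation argument (e.g.\ the increasing martingale convergence theorem applied to the conditional expectations $\mu(E_{\gamma,g}\mid \psi^{W_j})$), there exist sets $C_{W}\subset B^W$ with
$$\lim_{W\to G}\mu\Big(E_{\gamma,g}\,\Delta\,(\psi^{W})^{-1}(C_{W})\Big)=0.$$

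Next I would build $\rho_W$ from these approximating sets. Enumerate $G=\{g_1,g_2,\ldots\}$. For fixed $\gamma$ and each $k$, choose a finite window $W$ and a set $C_W^{(k)}\subset B^W$ approximating $E_{\gamma,g_k}$ as above; by intersecting/refining we may assume the sets $C_W^{(k)}$ for $k=1,\dots,K$ are pairwise disjoint for $W$ large (since the $E_{\gamma,g_k}$ are pairwise disjoint, their approximants can be disjointified at vanishing cost in measure). Define $\rho_W(\gamma,\xi):=g_k$ if $\xi\in C_W^{(k)}$ for the smallest such $k$, and $\rho_W(\gamma,\xi):=g_1$ (say) otherwise. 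Then for each fixed $g=g_k$,
$$\{x:\rho_W(\gamma,\psi^W(x))=g_k\}=(\psi^W)^{-1}(C_W^{(k)})\ \setminus\ \bigcup_{l<k}(\psi^W)^{-1}(C_W^{(l)}),$$
and since each $(\psi^W)^{-1}(C_W^{(l)})$ approximates the disjoint set $E_{\gamma,g_l}$, this set approximates $E_{\gamma,g_k}$; the symmetric difference is bounded by a finite sum of the individual approximation errors, which tend to $0$ as $W\to G$. This establishes the claimed limit for each fixed $\gamma$ and $g$.

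The diagonalization over the countably many pairs $(\gamma,g)\in\Gamma\times G$ is routine but requires a little care: the lemma only asks for the limit to hold for each fixed $\gamma$ and $g$ separately (not uniformly), so in fact no single diagonal choice of $W$ is needed — we may define $\rho_W$ for each finite $W$ independently, and the conclusion is a statement about what happens along any exhaustion. The main obstacle, and the point that deserves the most care, is the disjointification step: one must verify that finitely many approximants $(\psi^{W})^{-1}(C_W^{(l)})$ of pairwise disjoint sets can be replaced by genuinely disjoint sets without destroying the approximation property, and that this can be done consistently so that $\rho_W$ is a well-defined single-valued function. This follows from the elementary inequality $\mu(A'\setminus B' \,\Delta\, A) \le \mu(A'\Delta A)+\mu(B')$ when $A\cap B=\emptyset$ and $\mu(B'\Delta B)$ is small (so $\mu(B')$ is small relative to how much $B'$ overlaps $A$), combined with the fact that for each fixed $\gamma$ only finitely many values $g_1,\dots,g_K$ need to be controlled at a time, with the error from the tail $\{g_k:k>K\}$ absorbed by choosing $W$ large and $K$ large. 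I expect the bookkeeping here — ordering the quantifiers "for each $\gamma$, for each $K$, choose $W$ large" correctly — to be the only genuinely delicate part; the measure-theoretic core is just the generating property of $\psi$ plus martingale convergence.
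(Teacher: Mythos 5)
Your proposal is correct and follows essentially the same route as the paper's proof: both use the hypothesis that the level sets $\{x:\rho(\gamma,x)=g\}$ lie in the $G$-invariant $\sigma$-algebra generated by $\psi$, approximate them by sets measurable with respect to $\psi^W$ for large finite $W$, disjointify the approximants so that $\rho_W(\gamma,\cdot)$ is well defined, and handle the countably many pairs $(\gamma,g)$ by treating only finitely many at each stage (with arbitrary values elsewhere). The paper packages the bookkeeping via partitions $P_n$ approximated within $2^{-n}$ on windows $W_n$ and sets $\rho_W:=\rho_{W_i}$ for the maximal $W_i\subset W$, which is the same diagonal/exhaustion device you describe.
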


\begin{proof}
For each $\gamma \in \Gamma$ and $g\in G$ let 
$$X(\gamma,g)=\{x\in X:~ \rho(\gamma,x)=g\}.$$
 Choose orderings $G=\{g_1,g_2,\ldots\}$, $\Gamma=\{\gamma_1,\gamma_2,\ldots\}$ of $G$ and $\Gamma$. For each $n$, let $P_n$ be the smallest partition of $X$ containing $X(\gamma_i,g_j)$ for all $1\le i,j \le n$. Because the orbit change is measurable with respect to the smallest $G$-invariant sigma-algebra $\Sigma$ on which $\psi$ is measurable, the partitions $P_n$ are contained in $\Sigma$ (up to measure zero sets). Therefore for each $n$ there are a finite set $W_n \subset G$ and sets $X_n(\gamma_i,g_j)$ for $1\le i,j \le n$ such that
\begin{enumerate}
\item $X_n(\gamma_i,g_j)$ is contained in the smallest sigma-algebra on which $\psi^{W_n}$ is measurable;
\item $X_n(\gamma_i,g_j) \cap X_n(\gamma_i,g_k) = \emptyset$ if $j \ne k$ (and $1\le i,j,k\le n$);
\item $\mu( X(\gamma_i,g_j) \Delta X_n(\gamma_i,g_j) ) < 2^{-n}$ for all $1\le i,j\le n$.
\end{enumerate}
Let $\rho_{W_n}:\Gamma \times B^{W_n} \to G$ be a map such that $\rho_{W_n}(\gamma_i, \psi^{W_n}(x))  = g_j$ if $1\le i, j\le n$  and $x\in X_n(\gamma_i,g_j)$. 

For any finite $W \subset G$, if there is an $i \ge 1$ such that $W_i \subset W$ then choose a maximal such $i$ and let $\rho_W := \rho_{W_i}$. Otherwise, define $\rho_W$ arbitrarily. The lemma follows immediately.

\end{proof}


 \begin{lem}[Asymptotic cocycle identity]\label{lem:cocycle}
 For any $\gamma_1,\gamma_2\in \Gamma$,
 $$\lim_{W\to G}  \psi^W_*\mu\Big(\big\{\xi\in B^W:~\rho_W(\gamma_1\gamma_2,\xi)=\rho_V(\gamma_1,\xi')\rho_W(\gamma_2,\xi)\big\}\Big)=1$$
 where $g_2=\rho_W(\gamma_2,\xi)$, $V=W \cap Wg_2^{-1}$, $\xi' = \psi^V(T_{g_2}x)$ and $x\in X$ is any element with $\psi^W(x)=\xi$.
  \end{lem}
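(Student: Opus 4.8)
The plan is to deduce this asymptotic identity from the \emph{exact} cocycle identity for $\rho$ together with Lemma~\ref{lem:rho}. The one genuinely delicate point is that the window $V=W\cap Wg_2^{-1}$ over which $\rho_V(\gamma_1,\cdot)$ is evaluated depends on the variable $\xi$ through $g_2=\rho_W(\gamma_2,\xi)$, so Lemma~\ref{lem:rho} cannot be applied to a single fixed sequence of windows; the remedy will be to truncate the range of $\rho(\gamma_2,\cdot)$ to a finite set.

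First I would record the exact cocycle identity: for $\mu$-a.e.\ $x$, if $g_2=\rho(\gamma_2,x)$ and $g_1=\rho(\gamma_1,S_{\gamma_2}x)=\rho(\gamma_1,T_{g_2}x)$, then $S_{\gamma_1\gamma_2}x=S_{\gamma_1}(T_{g_2}x)=T_{g_1}T_{g_2}x=T_{g_1g_2}x$, so $\rho(\gamma_1\gamma_2,x)=g_1g_2$. Next, fix an arbitrary increasing exhaustion $W_1\subset W_2\subset\cdots$ of $G$ and $\epsilon>0$, and pick a finite $F\subset G$ with $\mu(\{x:\rho(\gamma_2,x)\notin F\})<\epsilon$ (possible since $\sum_g\mu(\{\rho(\gamma_2,\cdot)=g\})=1$). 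The key observation is that for each fixed $g\in F$ the sets $V_g^{(j)}:=W_j\cap W_jg^{-1}$ again form an increasing exhaustion of $G$, so Lemma~\ref{lem:rho} applies along them. Applying that lemma along $\{W_j\}$ to $(\gamma_2,\cdot)$ and to $(\gamma_1\gamma_2,\cdot)$, and along each $\{V_g^{(j)}\}$ ($g\in F$) to $(\gamma_1,\cdot)$ — in each case truncating the (countable) range of the relevant cocycle to a finite set so that finitely many of the per-value statements in Lemma~\ref{lem:rho} add up to smallness of the full bad set — and using that each $T_g$ preserves $\mu$ (to transfer a bad set for the argument $y$ into a bad set for $x$ of the same measure via $y=T_gx$), I would obtain a $j_0$ so that for $j\ge j_0$ the set $\cB_j$, defined as the union of $\{x:\rho(\gamma_2,x)\notin F\}$, $\{x:\rho_{W_j}(\gamma_2,\psi^{W_j}(x))\ne\rho(\gamma_2,x)\}$, $\{x:\rho_{W_j}(\gamma_1\gamma_2,\psi^{W_j}(x))\ne\rho(\gamma_1\gamma_2,x)\}$ and $\bigcup_{g\in F}T_g^{-1}\{y:\rho_{V_g^{(j)}}(\gamma_1,\psi^{V_g^{(j)}}(y))\ne\rho(\gamma_1,y)\}$, has $\mu(\cB_j)<4\epsilon$.

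Finally, for $x\notin\cB_j$ I would set $\xi=\psi^{W_j}(x)$, $g_2=\rho_{W_j}(\gamma_2,\xi)$, $V=W_j\cap W_jg_2^{-1}$, $\xi'=\psi^V(T_{g_2}x)$ and check that everything collapses to the exact identity: the second defining set of $\cB_j$ gives $g_2=\rho(\gamma_2,x)\in F$, whence $V=V_{g_2}^{(j)}$ and $T_{g_2}x=S_{\gamma_2}x$; the fourth gives $\rho_V(\gamma_1,\xi')=\rho(\gamma_1,T_{g_2}x)=g_1$; the third gives $\rho_{W_j}(\gamma_1\gamma_2,\xi)=\rho(\gamma_1\gamma_2,x)=g_1g_2$; hence $\rho_{W_j}(\gamma_1\gamma_2,\xi)=\rho_V(\gamma_1,\xi')\,\rho_{W_j}(\gamma_2,\xi)$. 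Since $\xi'$ is in fact determined by $\xi$ (one has $\xi'(v)=\xi(vg_2)$ because $V\subset W_jg_2^{-1}$), the event in the statement is a well-defined subset of $B^{W_j}$ that contains $\psi^{W_j}(X\setminus\cB_j)$, so its $\psi^{W_j}_*\mu$-measure is at least $1-4\epsilon$; letting $\epsilon\to0$ and recalling that the exhaustion was arbitrary finishes the proof. The main obstacle, as noted, is the $\xi$-dependence of $V$, and the finite truncation $F$ together with the observation that $\{W_j\cap W_jg^{-1}\}_j$ is still an exhaustion is exactly what is needed to control it.
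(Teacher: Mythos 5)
Your argument is correct and follows essentially the same route as the paper: both proofs pull the statement back to $X$, use the exact cocycle identity, reduce to finitely many group elements (your truncation set $F$ plays the role of the paper's finite set $S$ of pairs $(g_1,g_2)$ from the partition $\{Y(\gamma_1,\gamma_2,g_1,g_2)\}$), and then apply Lemma~\ref{lem:rho} value-by-value, using the key observations that $W\cap Wg_2^{-1}$ exhausts $G$ for each fixed $g_2$ and that $T_{g_2}$ preserves $\mu$. Your explicit remark that $\xi'(v)=\xi(vg_2)$, so the event is genuinely a subset of $B^W$, is a nice point the paper leaves implicit, but the substance of the proof is the same.
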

 \begin{proof}
The claim of the lemma is equivalent to
  $$\lim_{W\to G}  \mu\Big(\big\{x\in X:~\rho_W(\gamma_1\gamma_2,\psi^W(x))=\rho_V(\gamma_1,\psi^V(T_{g_2}x))\rho_W(\gamma_2,\psi^W(x))\big\}\Big)=1$$ 
where $V,g_2$ are as defined above. For $\gamma \in \Gamma, g\in G$ define $X(\gamma,g):=\{x\in X:~\rho(\gamma,x)=g\}$ as in the previous lemma. Also for $W \subset G$ finite define
$$X_W(\gamma,g):=\{x\in X:~\rho_W(\gamma,\psi^W(x))=g\}.$$
The cocycle identity $\rho(\gamma_1\gamma_2,x)=\rho(\gamma_1,S_{\gamma_2} x)\rho(\gamma_2,x)$ is equivalent to the statement:
$$X(\gamma_2, g_2) \cap T_{g_2}^{-1}X(\gamma_1,g_1) \subset X(\gamma_1\gamma_2,g_1g_2)$$
(mod 0) for every $g_1,g_2 \in G$.  Let 
\begin{eqnarray*}
Y(\gamma_1,\gamma_2,g_1,g_2)&:=& X(\gamma_1\gamma_2,g_1g_2) \cap X(\gamma_2,g_2)  \cap T_{g_2}^{-1} X(\gamma_1,g_1 )\\
Y_W(\gamma_1,\gamma_2,g_1,g_2)&:=& X_W(\gamma_1\gamma_2,g_1g_2) \cap X_W(\gamma_2,g_2)  \cap T_{g_2}^{-1} X_V(\gamma_1,g_1 ).
\end{eqnarray*}
Then $\{Y(\gamma_1,\gamma_2,g_1,g_2):~g_1,g_2 \in G\}$ partitions $X$ by the cocycle equation. Observe that
$$Y_W(\gamma_1,\gamma_2,g_1,g_2) \subset \big\{x\in X:~\rho_W(\gamma_1\gamma_2,\psi^W(x))=\rho_V(\gamma_1,\psi^V(T_{g_2}x))\rho_W(\gamma_2,\psi^W(x))\big\}.$$
So it suffices to prove that
$$\lim_{W \to G} \mu\left( \bigcup_{g_1,g_2 \in G} Y_W(\gamma_1,\gamma_2,g_1,g_2) \right) =1.$$
If $V= W \cap Wg_2^{-1}$ then as $W \to G$, $V \to G$. So the previous lemma implies
\begin{eqnarray*}
0&=&\lim_{W \to G}  \mu\big( X_W(\gamma_1\gamma_2,g_1g_2)  \Delta  X(\gamma_1\gamma_2,g_1g_2)  \big)\\
0&=&\lim_{W \to G}  \mu\big( X_W(\gamma_2,g_2)  \Delta  X(\gamma_2,g_2)  \big)\\
0&=&\lim_{W \to G}  \mu\big(T_{g_2}^{-1} X_V(\gamma_1,g_1)  \Delta T_{g_2}^{-1} X(\gamma_1,g_1 ) \big)
\end{eqnarray*}
which implies
$$0 = \lim_{W\to G} \mu(Y(\gamma_1,\gamma_2,g_1,g_2) \Delta Y_W(\gamma_1,\gamma_2,g_1,g_2))$$
for any fixed $g_1,g_2 \in G$. Since $\{Y(\gamma_1,\gamma_2,g_1,g_2):~g_1,g_2 \in G\}$ partitions $X$, for every $\epsilon>0$ there is a finite set $S \subset G$ such that
$$\mu\left( \bigcup_{g_1,g_2 \in S} Y(\gamma_1,\gamma_2,g_1,g_2) \right) \ge 1-\epsilon.$$
Because
$$0=\lim_{W \to G} \mu\left( \bigcup_{g_1,g_2 \in S} Y(\gamma_1,\gamma_2,g_1,g_2) \Delta   Y_W(\gamma_1,\gamma_2,g_1,g_2)  \right)$$
it follows that
$$\lim_{W \to G} \mu\left( \bigcup_{g_1,g_2 \in G} Y_W(\gamma_1,\gamma_2,g_1,g_2) \right) \ge  1-\epsilon.$$
Since $\epsilon>0$ is arbitrary, the lemma follows.

\end{proof}
 
 In general, if $Z$ is a topological space and $\{z_{W,i}\}_{W \subset G, i\in \N}$ is a collection of elements of $Z$ then we write $\lim_{(W,i) \to (G,\infty)} z_{W,i} =z$ if for every limit point $z_W$ of $\{z_{W,i}\}_{i=1}^\infty$, $\lim_{W \to G} z_W = z$.
  
Let $\{\bX_{\beta,i}\}_{i=1}^\infty$ be a sequence of approximate processes constructed from measures $\{\eta_i\}_{i=1}^\infty$ (adapted to a random sofic approximation $\sK$ to $G$) so that $\lim_{i\to\infty} \bX_{\beta,i} = \beta \circ \bX$. 
For each $(\sigma,\xi) \in \sym(m_i)^G\times B^{[m_i]}$ and $\gamma \in \Gamma$, define $S'_{W,\sigma,\xi}(\gamma):[m_i] \to [m_i]$ by
$$S'_{W,\sigma,\xi}(\gamma)(p) := \sigma(g)p \Leftrightarrow \rho_W(\gamma,\xi[\sigma,W](p)) = g.$$
\begin{lem}\label{lem:S0}
 For any $\gamma_1,\gamma_2 \in \Gamma$,
 $$\lim_{(W,i) \to (G,\infty)} \eta_i\times u_{m_i}\left(\{ (\sigma,\xi,p) \in \sym(m_i)^G\times B^{[m_i]}\times [m_i]:~S'_{W,\sigma,\xi}(\gamma_1)S'_{W,\sigma,\xi}(\gamma_2)p=S'_{W,\sigma,\xi}(\gamma_1\gamma_2)p\}\right)=1.$$
 Also for any $g_2' \in G$,
  \begin{eqnarray*}
  &&\lim_{(W,i) \to (G,\infty)} \eta_i\times u_{m_i}\left(\{ (\sigma,\xi,p) \in \sym(m_i)^G\times B^{[m_i]}\times [m_i]:~\rho_W(\gamma_2,\xi[\sigma,W](p)) = g_2'\}\right)\\
  &=&\mu(\{ x\in X:~ \rho(\gamma_2,x)=g'_2\}).
  \end{eqnarray*}
  \end{lem}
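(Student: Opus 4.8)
\textbf{Proof plan for Lemma \ref{lem:S0}.}

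The plan is to transfer the ``asymptotic'' identities already established at the level of the process $\bX$ (Lemma \ref{lem:rho} and Lemma \ref{lem:cocycle}) over to the sofic models $\bX_{\beta,i}$, using the fact that $\lim_{i\to\infty}\bX_{\beta,i}=\beta\circ\bX$ together with Lemma \ref{lem:W}. The key observation is that the quantity $S'_{W,\sigma,\xi}(\gamma)(p)$ depends on $(\sigma,\xi,p)$ only through the ``local picture'' $\xi[\sigma,W](p)\in B^W$ and the values $\sigma(g)p$ for $g$ ranging over the (finite) image of $\rho_W(\cdot,\cdot)$, together with a larger but still finite window $W'\supset W$ determined by $W$ and that image. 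Thus, for fixed $W$, the events in question are ``cylinder events'' whose $\eta_i\times u_{m_i}$-measure is controlled by the $W'$-local statistics $(\chi_i^{W'})_*(\eta_i\times u_{m_i})$ of $\bX_{\beta,i}$, which converges (as $i\to\infty$) to $\psi^{W'}_*\mu$ by the hypothesis $\lim_{i\to\infty}\bX_{\beta,i}=\beta\circ\bX$ and Lemma \ref{lem:W}.

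Concretely, I would proceed as follows. First, for the second assertion: fix $g_2'\in G$ and $\gamma_2\in\Gamma$. The event $\{(\sigma,\xi,p):\rho_W(\gamma_2,\xi[\sigma,W](p))=g_2'\}$ is, for each fixed $W$, a union of cylinders in the coordinate $\xi[\sigma,W](p)\in B^W$; its $\eta_i\times u_{m_i}$-measure equals $\psi^W_*\mu$-measure of the corresponding set in $B^W$ in the limit $i\to\infty$ (using $\lim_{i\to\infty}\bX_{\beta,i}=\beta\circ\bX$ and Lemma \ref{lem:W} to identify the $W$-local statistics). By Lemma \ref{lem:rho}, as $W\to G$ the set $\{x:\rho_W(\gamma_2,\psi^W(x))=g_2'\}$ converges in measure to $\{x:\rho(\gamma_2,x)=g_2'\}$, so the iterated limit $\lim_{(W,i)\to(G,\infty)}$ yields $\mu(\{x:\rho(\gamma_2,x)=g_2'\})$ as claimed. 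Second, for the cocycle identity: the event that $S'_{W,\sigma,\xi}(\gamma_1)S'_{W,\sigma,\xi}(\gamma_2)p=S'_{W,\sigma,\xi}(\gamma_1\gamma_2)p$ fails is contained in the union of three ``bad'' events --- (i) the sofic-approximation relation $\sigma(g_1)\sigma(g_2)p\ne\sigma(g_1g_2)p$ fails for the relevant $g_1=\rho_V(\gamma_1,\cdot),g_2=\rho_W(\gamma_2,\cdot)$; (ii) $\sigma(g_2)p\notin$ the region where $\xi[\sigma,W]$ correctly records $\psi^V(T_{g_2}x)$ (i.e.\ $V=W\cap Wg_2^{-1}$ fails to be ``large enough'' in the orbit picture around $p$); and (iii) the genuine failure event from Lemma \ref{lem:cocycle}, $\rho_W(\gamma_1\gamma_2,\xi)\ne\rho_V(\gamma_1,\xi')\rho_W(\gamma_2,\xi)$. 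Event (i) has vanishing measure as $i\to\infty$ because $\sK$ is a sofic approximation (ranging $g_1,g_2$ over the finite image of $\rho_W$); event (ii) has vanishing measure by the same soficity argument combined with the observation that $\xi[\sigma,W]$ is built precisely so that $\xi[\sigma,W](\sigma(g_2)p)(v)=\xi[\sigma,W](p)(vg_2)$ whenever $\sigma(vg_2)p=\sigma(v)\sigma(g_2)p$; event (iii) corresponds, under the correspondence between $W$-local statistics of $\bX_{\beta,i}$ and of $\beta\circ\bX$, to the event whose $\mu$-measure tends to $0$ as $W\to G$ by Lemma \ref{lem:cocycle}. Summing these three estimates and taking $\lim_{(W,i)\to(G,\infty)}$ gives the first assertion.

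The main obstacle I anticipate is the bookkeeping in event (ii): one must carefully choose the enlarged window $W'$ (depending on $W$ and on the finite image of $\rho_W$, equivalently on the finite support of the orderings used in Lemma \ref{lem:rho}) so that, on a set of $(\sigma,p)$ of $\eta_i\times u_{m_i}$-measure tending to $1$, the local picture $\xi[\sigma,W']$ around $p$ faithfully simulates the orbit-equivalence dynamics near $x$ --- in particular so that $\rho_V(\gamma_1,\psi^V(T_{g_2}x))$ is correctly computed from $\xi[\sigma,W'](p)$ via $\xi[\sigma,V](\sigma(g_2)p)$. This is where the precise definition of ``$(W,\epsilon)$-close'' and the sofic relations $\sigma(v)\sigma(g_2)p=\sigma(vg_2)p$ are used; the argument is conceptually routine but notationally delicate, and it is essentially the heart of ``lifting orbit-equivalences'' that the subsection title advertises. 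Everything else reduces to the two already-proven lemmas plus the definition of convergence of approximate processes.
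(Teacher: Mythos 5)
Your outline is essentially correct, but it takes a genuinely different route from the paper's. The paper never invokes Lemma \ref{lem:cocycle} in proving Lemma \ref{lem:S0}: instead it decomposes the space over pairs $(g_1,g_2)\in G\times G$, setting $Y(g_1,g_2)=\{x:\rho(\gamma_2,x)=g_2,\ \rho(\gamma_1,T_{g_2}x)=g_1\}$ and the sofic analogue $Y^W_i(g_1,g_2)$ cut out by the three conditions $\rho_W(\gamma_1\gamma_2,\xi[\sigma,W](p))=g_1g_2$, $\rho_W(\gamma_2,\xi[\sigma,W](p))=g_2$, $\rho_W(\gamma_1,\xi[\sigma,W](\sigma(g_2)p))=g_1$ together with the sofic relation $\sigma(g_1g_2)p=\sigma(g_1)\sigma(g_2)p$; it then shows $\lim_{(W,i)}\eta_i\times u_{m_i}(Y^W_i(g_1,g_2))=\mu(Y(g_1,g_2))$ using Lemmas \ref{lem:W}, \ref{lem:composition} and \ref{lem:rho} plus the exact cocycle identity for $\rho$, and closes with a finite truncation in $(g_1,g_2)$; the second assertion is then read off from the same decomposition by disjointness of the $Y^W_i(g_1,g'_2)$ in $g_1$. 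Your route, transferring Lemma \ref{lem:cocycle} through the $W'$-local statistics, buys a more direct proof of the second assertion (it is just convergence of the $W$-local statistics followed by Lemma \ref{lem:rho}) and avoids an explicit truncation, since for fixed $W$ the image of $\rho_W$ is automatically finite; the price is exactly the bookkeeping you flag.

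One concrete caution on that bookkeeping: your three bad events do not quite cover the failure set as stated. In the sofic model, $S'_{W,\sigma,\xi}(\gamma_1)$ applied at the shifted point $q=\sigma(g_2)p$ is computed with the window-$W$ estimator $\rho_W(\gamma_1,\xi[\sigma,W](q))$, whereas Lemma \ref{lem:cocycle} is phrased with $\rho_V(\gamma_1,\psi^V(T_{g_2}x))$ for $V=W\cap Wg_2^{-1}$. So even on the complement of (i)--(iii) you only know $\rho_W(\gamma_1\gamma_2,\cdot)=\rho_V(\gamma_1,\cdot)\rho_W(\gamma_2,\cdot)$, not the identity actually used by $S'$. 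You need a fourth bad event, $\rho_W(\gamma_1,\psi^W(T_{g_2}x))\ne\rho_V(\gamma_1,\psi^V(T_{g_2}x))$, whose $\mu$-measure tends to $0$ by two applications of Lemma \ref{lem:rho} and which transfers to the sofic model through the enlarged window $W'\supset W\cup Wg_2$ exactly as in your event (ii). With that addition (or by following the paper's decomposition, which sidesteps $V$ entirely by using $\rho_W$ at the shifted point from the start), the argument goes through.
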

  
\begin{proof}

For $g_1,g_2 \in G$, let
$$Y(g_1,g_2):=\{x\in X:~ \rho(\gamma_2,x)=g_2, \rho(\gamma_1,T_{g_2}x)=g_1 \}.$$
Note that a.e. $x\in Y(g_1,g_2)$ satisfies $\rho(\gamma_1\gamma_2,x)=g_1g_2$ by the cocycle equation. For $i \ge 0$ and $W \subset G$ finite let $Y^W_i(g_1,g_2)$ be the set of all $ (\sigma,\xi,p) \in \sym(m_i)^G\times B^{[m_i]}\times [m_i]$ such that
\begin{eqnarray*}
g_1g_2 &=& \rho_W(\gamma_1\gamma_2,\xi[\sigma,W](p)),\\
g_2 &=&\rho_W(\gamma_2,\xi[\sigma,W](p)),\\
g_1&=& \rho_W(\gamma_1,\xi[\sigma,W](\sigma(g_2)p)),\\
\sigma(g_1g_2)p&=&\sigma(g_1)\sigma(g_2)p.
\end{eqnarray*}
Note that if $(\sigma,\xi,p)  \in Y^W_i(g_1,g_2)$ then $S'_{W,\sigma,\xi}(\gamma_1)S'_{W,\sigma,\xi}(\gamma_2)p=S'_{W,\sigma,\xi}(\gamma_1\gamma_2)p$. Therefore, it suffices to show that
\begin{eqnarray}\label{eqn:Y2}
 \lim_{(W,i) \to (G,\infty)} \eta_i\times u_{m_i}\left( \bigcup \left\{Y_i^W(g_1,g_2):~ g_1,g_2 \in G \right\}\right) =1.
 \end{eqnarray}
We claim that for every $g_1,g_2\in G$,
\begin{eqnarray}\label{eqn:Y}
\lim_{(W,i) \to (G,\infty)} \eta_i\times u_{m_i}(Y^W_i(g_1,g_2)) = \mu( Y(g_1,g_2)), \quad \forall g_1,g_2 \in G.
\end{eqnarray}
To see this, for any finite $W \subset G$ and $g_1,g_2 \in G$, let  $Z(W;g_1,g_2)$ be the set of all $x\in X$ such that
\begin{eqnarray*}
g_1g_2 &=& \rho_W(\gamma_1\gamma_2,  \psi^W(x) )\\
g_2 &=&\rho_W(\gamma_2,\psi^W(x) ) \\
g_1&=& \rho_W(\gamma_1, \psi^W(T_{g_2}x) ).
\end{eqnarray*}
Then
$$\lim_{i\to \infty } \eta_i\times u_{m_i}(Y^W_i(g_1,g_2)) = \mu(Z(W;g_1,g_2)).$$
This follows from Lemmas \ref{lem:W} and \ref{lem:composition}. Lemma \ref{lem:rho} implies 
\begin{eqnarray*}
&&\lim_{W\to G} \mu\Big( \big\{x\in X:~g_1g_2 = \rho_W(\gamma_1\gamma_2,  \psi^W(x) )\big\}  \Delta \big\{ x\in X:~ g_1g_2 =  \rho(\gamma_1\gamma_2,  x )\big\}\Big)=0 \\
&&\lim_{W\to G} \mu\Big( \big\{x\in X:~      g_2 = \rho_W(\gamma_2,  \psi^W(x) )\big\}  \Delta \big\{ x\in X:~ g_2 =  \rho(\gamma_2,  x )\big\}\Big)=0 \\
&&\lim_{W\to G} \mu\Big( \big\{x\in X:~      g_1 = \rho_W(\gamma_1,  \psi^W(T_{g_2}x) )\big\}  \Delta \big\{ x\in X:~ g_1 =  \rho(\gamma_1,  T_{g_2}x  )\big\}\Big)=0.
\end{eqnarray*}
Therefore
$$\lim_{W \to G}  \mu(Z(W;g_1,g_2)) = \mu(Y(g_1,g_2))$$
which implies the claim. Because 
$$\mu\left( \bigcup \left\{Y(g_1,g_2):~ g_1,g_2 \in G\right\} \right) =1,$$
for any $\epsilon>0$ there exists a finite set $G' \subset G$ such that 
$$\mu\left( \bigcup \left\{Y(g_1,g_2):~ g_1,g_2 \in G'\right\} \right) \ge 1-\epsilon.$$
By the claim,
$$ \lim_{(W,i) \to (G,\infty)} \eta_i\times u_{m_i}\left( \bigcup \left\{Y_i^W(g_1,g_2):~ g_1,g_2 \in G' \right\}\right) \ge 1-\epsilon.$$
Since this is true for every $\epsilon$,
$$ \lim_{(W,i) \to (G,\infty)} \eta_i\times u_{m_i}\left( \bigcup \left\{Y_i^W(g_1,g_2):~ g_1,g_2 \in G \right\}\right) =1$$
which implies the first statement.

To see the second statement, observe
\begin{eqnarray*}
&&\{ (\sigma,\xi,p) \in \sym(m_i)^G\times B^{[m_i]}\times [m_i]:~\rho_W(\gamma_2,\xi[\sigma,W](p)) = g_2'\} \cap \bigcup \left\{Y_i^W(g_1,g_2):~ g_1, g_2\in G \right\}\\
&=&  \bigcup \left\{Y_i^W(g_1,g_2'):~ g_1 \in G \right\}.
\end{eqnarray*}
Because $Y_i^W(g_1,g'_2)$ is disjoint from $Y_i^W(g'_1,g'_2)$ if $g_1 \ne g'_1$ equations (\ref{eqn:Y2}) and (\ref{eqn:Y}) imply
 \begin{eqnarray*}
 &&\liminf_{(W,i) \to (G,\infty)} \eta_i\times u_{m_i}\left(\{ (\sigma,\xi,p) \in \sym(m_i)^G\times B^{[m_i]}\times [m_i]:~\rho_W(\gamma_2,\xi[\sigma,W](p)) = g_2'\}\right)\\
 &=&\mu \left( \bigcup \left\{Y(g_1,g_2'):~ g_1 \in G \right\}\right)= \mu(\{ x\in X:~ \rho(\gamma_2,x)=g'_2\})
 \end{eqnarray*}
 as required.

   \end{proof}


\begin{lem}\label{lem:S}
There are maps $S_{W,\sigma,\xi}(\gamma) \in \sym(m_i)$ (for every $W,\sigma,\xi,\gamma$) such that for every $\gamma \in \Gamma$,
$$\lim_{W\to G}\liminf_{i\to\infty} \eta_i\times u_{m_i}\big(\{(\sigma,\xi,p)\in \sym(m_i)^G\times B^{[m_i]}\times [m_i]:~S'_{W,\sigma,\xi}(\gamma)(p) = S_{W,\sigma,\xi}(\gamma)(p)\}\big)=1.$$
\end{lem}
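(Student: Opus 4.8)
The issue that must be addressed is that the maps $S'_{W,\sigma,\xi}(\gamma):[m_i]\to[m_i]$ defined by $S'_{W,\sigma,\xi}(\gamma)(p)=\sigma(g)p$ where $g=\rho_W(\gamma,\xi[\sigma,W](p))$ need not be bijections: two different values of $p$ can get mapped to the same point, and some points may not be hit. The plan is to repair this on the large subset of $[m_i]$ where $S'_{W,\sigma,\xi}(\gamma)$ is already injective with a ``coherent'' image, and define $S_{W,\sigma,\xi}(\gamma)$ arbitrarily (but bijectively) elsewhere. So first I would show that for fixed $\gamma$, the set of $(\sigma,\xi,p)$ for which $S'_{W,\sigma,\xi}(\gamma)$ fails to be injective at $p$ has $\eta_i\times u_{m_i}$-measure tending to $0$ as $(W,i)\to(G,\infty)$. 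Concretely, using Lemma \ref{lem:S0} applied to $\gamma^{-1}$ and $\gamma$ (the asymptotic cocycle identity gives $S'(\gamma^{-1})S'(\gamma)p=S'(e)p=p$ for a $1-o(1)$ fraction of $(\sigma,\xi,p)$, since $\rho(e,x)=e$), one sees that $S'_{W,\sigma,\xi}(\gamma)$ has a left inverse on a large set, hence is injective on a large set of $p$'s. The second statement of Lemma \ref{lem:S0}, together with the fact that $\rho(\gamma,\cdot)$ takes values in a finite set up to small error (because $\mu(\{x:\rho(\gamma,x)\in G'\})\to 1$ for finite $G'$), controls how the images distribute: for a $1-o(1)$ fraction of $\sigma$, the number of $p$ with $\rho_W(\gamma,\xi[\sigma,W](p))=g'$ is close to $m_i\,\mu(\{x:\rho(\gamma,x)=g'\})$, and since $\sigma(g')$ is a genuine permutation, the images of these $p$'s under $S'(\gamma)$ land in a set of the correct size. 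Summing over $g'\in G'$ shows the image of $S'_{W,\sigma,\xi}(\gamma)$ covers all but an $o(1)$-fraction of $[m_i]$.

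Next I would make this precise with a counting/Hall-type argument: fix $(\sigma,\xi)$ in the good set (the set of $(\sigma,\xi)$ for which the above two estimates hold with error at most $\delta$; by Fubini this set has $\eta_i$-measure $\ge 1-\sqrt{\delta}$ for appropriate choices). Let $P\subset[m_i]$ be the set of $p$ on which $S'_{W,\sigma,\xi}(\gamma)$ is injective; we have $|P|\ge(1-\delta')m_i$ and $|S'_{W,\sigma,\xi}(\gamma)(P)|=|P|$. Then define $S_{W,\sigma,\xi}(\gamma)$ to agree with $S'_{W,\sigma,\xi}(\gamma)$ on $P$ and to be any bijection from $[m_i]\setminus P$ onto $[m_i]\setminus S'_{W,\sigma,\xi}(\gamma)(P)$ (these two sets have equal cardinality). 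On the complement of the good set of $(\sigma,\xi)$, let $S_{W,\sigma,\xi}(\gamma)$ be the identity, say. By construction each $S_{W,\sigma,\xi}(\gamma)\in\sym(m_i)$, and
$$\eta_i\times u_{m_i}\big(\{(\sigma,\xi,p):S'_{W,\sigma,\xi}(\gamma)(p)=S_{W,\sigma,\xi}(\gamma)(p)\}\big)\ge (1-\sqrt{\delta})(1-\delta'),$$
since on the good $(\sigma,\xi)$ set the two maps agree for all $p\in P$. Letting $W\to G$ and $i\to\infty$ with $\delta\to 0$ (which is legitimate because the estimates from Lemma \ref{lem:S0} are asymptotic in $(W,i)$) gives the claimed limit $=1$.

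The main obstacle, and the place requiring care, is the passage from ``$S'(\gamma)$ is injective on a large set $P$'' plus ``the image $S'(\gamma)(P)$ is large'' to a bijective extension — this is trivial set-theoretically once $|P|$ and $|S'(\gamma)(P)|$ are pinned down, but one has to be careful that the bad sets for the injectivity estimate and for the image-size estimate are both controlled uniformly, i.e. that one can choose a single good set of $(\sigma,\xi)$ of measure $1-o(1)$ on which \emph{both} hold simultaneously. This is where the two parts of Lemma \ref{lem:S0} get combined via Markov's inequality and Fubini, exactly as in the passage from $X(m_i,r)$ to $X_\sigma(m_i,r)$ in the proof of Theorem \ref{thm:Zasymptotic}. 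A secondary subtlety is that the injectivity estimate really does need the asymptotic cocycle identity (Lemma \ref{lem:cocycle}/first part of Lemma \ref{lem:S0}) rather than just the definition of $S'$, since $\rho_W$ is only an approximate cocycle; but once one has $S'(\gamma^{-1})\circ S'(\gamma)=\mathrm{id}$ off an $o(1)$-set, injectivity of $S'(\gamma)$ off an $o(1)$-set follows immediately.
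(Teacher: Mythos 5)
Your proposal is correct and follows essentially the paper's argument: apply the asymptotic composition identity of Lemma \ref{lem:S0} with $\gamma$ and $\gamma^{-1}$ to conclude that $S'_{W,\sigma,\xi}(\gamma)$ is close to a bijection, then modify it on a small set of points to obtain a genuine permutation. The only cosmetic differences are that the paper deduces asymptotic surjectivity (via $S'(\gamma)S'(\gamma^{-1})p=p$) where you deduce asymptotic injectivity (via $S'(\gamma^{-1})S'(\gamma)p=p$), and your additional image-size estimate from the second part of Lemma \ref{lem:S0} is not actually needed, since injectivity on a large set already makes the bijective extension possible.
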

\begin{proof}
It follows from  the previous lemma that for any $\gamma \in \Gamma$,
$$\lim_{W\to G}\liminf_{i\to\infty} \eta_i\times u_{m_i}\big(\{(\sigma,\xi,p)\in \sym(m_i)^G\times B^{[m_i]}\times [m_i]:~S'_{W,\sigma,\xi}(\gamma)S'_{W,\sigma,\xi}(\gamma^{-1}) (p) = p\}\big)=1.$$
Therefore, $S'_{W,\sigma,\xi}$ is asymptotically surjective, in the sense that
$$\lim_{W\to G}\liminf_{i\to\infty} \eta_i\times u_{m_i}\big(\{(\sigma,\xi,p)\in \sym(m_i)^G\times B^{[m_i]}\times [m_i]:~p \in \textrm{image } S'_{W,\sigma,\xi}(\gamma)\}\big)=1.$$
This implies the lemma.
\end{proof}


Let $\theta_{W,i}$ be the measure on $\sym(m_i)^\Gamma \times B^{[m_i]}$ obtained by pushing $\eta_i$ forward under the map
$$(\sigma,\xi) \in \sym(m_i)^G\times B^{[m_i]} \mapsto (S_{W,\sigma, \xi}, \xi).$$
Let $\bY_{\beta,W,i}$ be the approximate process constructed from $\theta_{W,i}$ and $\kappa^\rho_{W,i}$ be the projection of $\theta_{W,i}$ to $\sym(m_i)^\Gamma$. 

\begin{lem}\label{lem:lemma0.5}
$\{\kappa^\rho_{W,i}\}_{i\in \N, W\subset G}$ is a random sofic approximation to $\Gamma$ in the following sense. For  every $\gamma_1,\gamma_2 \in \Gamma$
$$\lim_{(W,i) \to (G,\infty)}  \kappa^\rho_{W,i}\times u_{m_i}\left(\{(\sigma,p) \in \sym(m_i)^\Gamma \times [m_i]:~\sigma(\gamma_1)\sigma(\gamma_2)p = \sigma(\gamma_1\gamma_2)p\}\right) = 1$$
and for every $\gamma_1\ne \gamma_2 \in \Gamma$,
$$\lim_{(W,i) \to (G,\infty)}  \kappa^\rho_{W,i}\times u_{m_i}\left(\{(\sigma,p) \in \sym(m_i)^\Gamma \times [m_i]:~\sigma(\gamma_1)p \ne \sigma(\gamma_2)p\}\right) = 1.$$
\end{lem}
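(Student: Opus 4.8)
The plan is to push the asymptotic homomorphism property of Lemma \ref{lem:S0} and the asymptotic surjectivity of Lemma \ref{lem:S} forward from the maps $S'_{W,\sigma,\xi}$ to the genuine permutations $S_{W,\sigma,\xi}$. Since $\kappa^\rho_{W,i}$ is the pushforward of $\eta_i$ under $(\sigma,\xi)\mapsto S_{W,\sigma,\xi}$, for any measurable $C\subset\sym(m_i)^\Gamma\times[m_i]$ one has $\kappa^\rho_{W,i}\times u_{m_i}(C)=\eta_i\times u_{m_i}(\{(\sigma,\xi,p):~(S_{W,\sigma,\xi},p)\in C\})$, so both assertions reduce to statements about $\eta_i\times u_{m_i}$ and the permutations $S_{W,\sigma,\xi}(\gamma)$.

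For the homomorphism condition, fix $\gamma_1,\gamma_2\in\Gamma$. For $\gamma\in\{\gamma_1,\gamma_2,\gamma_1\gamma_2\}$ and $(\sigma,\xi)$ let $E_\gamma(\sigma,\xi)\subset[m_i]$ be the set of $p$ with $S'_{W,\sigma,\xi}(\gamma)(p)=S_{W,\sigma,\xi}(\gamma)(p)$, and let $H$ be the set of triples $(\sigma,\xi,p)$ with $S'_{W,\sigma,\xi}(\gamma_1)S'_{W,\sigma,\xi}(\gamma_2)p=S'_{W,\sigma,\xi}(\gamma_1\gamma_2)p$. A one-line substitution shows that whenever $(\sigma,\xi,p)\in H$, $p\in E_{\gamma_2}(\sigma,\xi)\cap E_{\gamma_1\gamma_2}(\sigma,\xi)$ and $S_{W,\sigma,\xi}(\gamma_2)(p)\in E_{\gamma_1}(\sigma,\xi)$, one has $S_{W,\sigma,\xi}(\gamma_1)S_{W,\sigma,\xi}(\gamma_2)p=S_{W,\sigma,\xi}(\gamma_1\gamma_2)p$. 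By Lemma \ref{lem:S} the sets $\{(\sigma,\xi,p):~p\in E_\gamma(\sigma,\xi)\}$ have $\eta_i\times u_{m_i}$-measure tending to $1$ as $(W,i)\to(G,\infty)$; by Lemma \ref{lem:S0} so does $H$; and since each $S_{W,\sigma,\xi}(\gamma_2)$ is an honest permutation of $[m_i]$, the map $(\sigma,\xi,p)\mapsto(\sigma,\xi,S_{W,\sigma,\xi}(\gamma_2)(p))$ preserves $\eta_i\times u_{m_i}$, whence $\{(\sigma,\xi,p):~S_{W,\sigma,\xi}(\gamma_2)(p)\in E_{\gamma_1}(\sigma,\xi)\}$ has the same measure as $\{(\sigma,\xi,p):~p\in E_{\gamma_1}(\sigma,\xi)\}$. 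Intersecting, the set where $S_{W,\sigma,\xi}(\gamma_1)S_{W,\sigma,\xi}(\gamma_2)p=S_{W,\sigma,\xi}(\gamma_1\gamma_2)p$ has measure tending to $1$, which gives the first assertion.

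For the injectivity condition, fix $\gamma_1\ne\gamma_2$ and write $g_j=\rho_W(\gamma_j,\xi[\sigma,W](p))$, so that $S'_{W,\sigma,\xi}(\gamma_j)(p)=\sigma(g_j)p$. In view of Lemma \ref{lem:S} it suffices to prove $\eta_i\times u_{m_i}(\{(\sigma,\xi,p):~\sigma(g_1)p\ne\sigma(g_2)p\})\to1$. For fixed $W$, the law of $\xi[\sigma,W](p)$ under $\eta_i\times u_{m_i}$ is the time-$0$ local statistics of the approximate process $\bX_{\beta,i}^W$ (as in Lemma \ref{lem:W}), which converges to $\psi^W_*\mu$; since each $\rho_W(\gamma_j,\cdot)$ is defined on the finite set $B^W$, it follows that $\eta_i\times u_{m_i}(\{g_1=g_2\})\to\mu(\{x:~\rho_W(\gamma_1,\psi^W(x))=\rho_W(\gamma_2,\psi^W(x))\})$ as $i\to\infty$, and by Lemma \ref{lem:rho} this last quantity tends to $0$ as $W\to G$, because $\rho(\gamma_1,\cdot)\ne\rho(\gamma_2,\cdot)$ $\mu$-a.e.\ by essential freeness of $\bY$. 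On the complementary event $g_1\ne g_2$, decomposing it over the finitely many values of $\rho_W(\gamma_1,\cdot)$ and $\rho_W(\gamma_2,\cdot)$ and using that $\eta_i$ projects to the random sofic approximation $\kappa_i$, the subset where additionally $\sigma(g_1)p=\sigma(g_2)p$ has $\eta_i\times u_{m_i}$-measure at most $\sum_{g\ne h}\kappa_i\times u_{m_i}(\{(\sigma,p):~\sigma(g)p=\sigma(h)p\})$, a finite sum tending to $0$ as $i\to\infty$. Adding the two bounds gives the claim, and combining with Lemma \ref{lem:S} once more yields $S_{W,\sigma,\xi}(\gamma_1)(p)\ne S_{W,\sigma,\xi}(\gamma_2)(p)$ for a set of $(\sigma,\xi,p)$ of measure tending to $1$.

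The step I expect to require the most care is the composition in the second paragraph: matching up the approximate homomorphism property of the $S'$ with the almost-sure equalities $S'_{W,\sigma,\xi}(\gamma)=S_{W,\sigma,\xi}(\gamma)$ coming from Lemma \ref{lem:S}. What makes this go through is precisely that $S_{W,\sigma,\xi}(\gamma_2)$ is a genuine permutation, so pushing a large subset of $[m_i]$ forward under it keeps it large; this is the whole reason Lemma \ref{lem:S} was needed. The injectivity argument is more routine, the only delicate points being the interchange of the limits in $i$ and $W$ and the use of the finiteness of each $B^W$, which obviates any need for uniformity over the infinitely many values the cocycle $\rho$ might take.
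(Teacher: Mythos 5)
Your proof is correct, and for the first assertion it is essentially the paper's argument: the paper also passes from $S_{W,\sigma,\xi}$ back to $S'_{W,\sigma,\xi}$ via Lemma \ref{lem:S} and then quotes Lemma \ref{lem:S0}; your explicit observation that the substitution at the point $S_{W,\sigma,\xi}(\gamma_2)p$ is legitimate because $(\sigma,\xi,p)\mapsto(\sigma,\xi,S_{W,\sigma,\xi}(\gamma_2)p)$ preserves $\eta_i\times u_{m_i}$ (each $S_{W,\sigma,\xi}(\gamma_2)$ being a genuine permutation) supplies a detail the paper leaves implicit. For the injectivity assertion your route differs from the paper's: the paper first uses the already-proved multiplicativity statement to reduce to showing approximate fixed-point freeness for a single $\gamma\ne e$, and then combines the second statement of Lemma \ref{lem:S0} with essential freeness of the $\Gamma$-action and the soficity of $\sK$; you instead treat the pair $\gamma_1\ne\gamma_2$ directly, showing via convergence of the $W$-local statistics and Lemma \ref{lem:rho} that the two cocycle values $g_1,g_2$ differ with high probability, and then invoking the pairwise-injectivity clause of the sofic condition for $\sK$ summed over the finite range of $\rho_W$. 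Both arguments rest on the same ingredients (Lemma \ref{lem:S}, essential freeness, soficity of $\sK$, approximation of $\rho$ by $\rho_W$); the paper's reduction is slightly slicker in that it needs only that $\rho_W(\gamma,\cdot)$ avoids $e$, while yours avoids invoking the first assertion at the cost of the (routine) step of upgrading the pointwise-in-$g$ convergence of Lemma \ref{lem:rho} to convergence in measure of $\rho_W(\gamma_j,\psi^W(\cdot))$ to $\rho(\gamma_j,\cdot)$, which you should state explicitly but which is the same finite-exhaustion argument the paper uses inside Lemmas \ref{lem:cocycle} and \ref{lem:S0}.
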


\begin{proof}
For the first assertion, note that by definition of $\kappa^\rho_{W,i}$ and the previous lemma
\begin{eqnarray*}
&&\lim_{(W,i) \to (G,\infty)}  \kappa^\rho_{W,i}\times u_{m_i}\left(\{(\sigma,p) \in \sym(m_i)^\Gamma \times [m_i]:~\sigma(\gamma_1)\sigma(\gamma_2)p = \sigma(\gamma_1\gamma_2)p\}\right)\\
&=& \lim_{(W,i) \to (G,\infty)} \eta_i\times u_{m_i}\left(\{ (\sigma,\xi,p) \in \sym(m_i)^G\times B^{[m_i]}\times [m_i]:~S_{W,\sigma,\xi}(\gamma_1)S_{W,\sigma,\xi}(\gamma_2)p=S_{W,\sigma,\xi}(\gamma_1\gamma_2)p\}\right)\\
&=& \lim_{(W,i) \to (G,\infty)} \eta_i\times u_{m_i}\left(\{ (\sigma,\xi,p) \in \sym(m_i)^G\times B^{[m_i]}\times [m_i]:~S'_{W,\sigma,\xi}(\gamma_1)S'_{W,\sigma,\xi}(\gamma_2)p=S'_{W,\sigma,\xi}(\gamma_1\gamma_2)p\}\right)\\
&=&1.
\end{eqnarray*}
The last equality follows from Lemma \ref{lem:S0}.

Because the first statement is true, to prove the second statement it suffices to show that for any $\gamma \ne e$,
$$\lim_{(W,i) \to (G,\infty)}  \kappa^\rho_{W,i}\times u_{m_i}\left(\{(\sigma,p) \in \sym(m_i)^\Gamma \times [m_i]:~p \ne \sigma(\gamma)p\}\right) = 1.$$
Equivalently,
$$\lim_{(W,i) \to (G,\infty)}  \eta_i\times u_{m_i}\left(\{ (\sigma,\xi,p) \in \sym(m_i)^G\times B^{[m_i]}\times [m_i]:~p\ne S_{W,\sigma,\xi}(\gamma)p\}\right)=1.$$
By Lemma \ref{lem:S} and the definition of $S'_{W,\sigma,\xi}$, it suffices to prove
$$\lim_{(W,i) \to (G,\infty)}  \eta_i\times u_{m_i}\left(\{ (\sigma,\xi,p) \in \sym(m_i)^G\times B^{[m_i]}\times [m_i]:~p\ne \sigma(\rho_W(\gamma,\xi[\sigma,W](p)))p\}\right)=1.$$
If $g\in G \setminus\{e\}$ then because $\sK$ is a random sofic approximation to $G$,
$$\lim_{i\to\infty} \eta_i\times u_{m_i}\left(\{ (\sigma,\xi,p) \in \sym(m_i)^G\times B^{[m_i]}\times [m_i]:~p\ne \sigma(g)p\}\right) =1.$$
Also, if $\gamma$ is not the identity element then by Lemma \ref{lem:S0} and because $\Gamma \cc (X,\mu)$ is essentially free, for every $\epsilon>0$ there is a finite set $V \subset G \setminus \{e\}$ such that
$$\liminf_{(W,i)\to(G,\infty)} \eta_i\times u_{m_i}\left(\{ (\sigma,\xi,p) \in \sym(m_i)^G\times B^{[m_i]}\times [m_i]:~\rho_W(\gamma,\xi[\sigma,W](p)) \in V \}\right)\ge 1-\epsilon.$$
Because $\epsilon>0$ is arbitrary this inequality and the previous equation imply the lemma.


\end{proof}


\begin{lem}\label{lem:lemma1}
The following hold.
\begin{enumerate}
\item $\lim_{(W,i)\to (G,\infty)} \bY_{\beta,W,i} = \beta \circ \bY$.
\item If $\lim_{i\to\infty} \bX_{\beta,i} = \beta \circ \bX$ strongly then $\lim_{(W,i) \to (G,\infty)}  \bY_{\beta,W,i} = \beta \circ \bY$ strongly in the sense that for all finite $V \subset \Gamma$ and $\epsilon>0$,
$$\lim_{(W,i)\to (G,\infty)} \theta_{W,i}\Big(\big\{(\sigma,\xi)\in \sym(m_i)^\Gamma\times B^{[m_i]}:~d_V((\sigma,\xi),\beta \circ \phi)<\epsilon \big\}\Big) =1.$$

\end{enumerate}
\end{lem}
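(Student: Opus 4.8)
Throughout write $\psi=\beta\circ\phi$, and for a finite $V\subset\Gamma$ let $(\psi^V_S)_*\mu$ be the $V$-local statistics of $\beta\circ\bY$, i.e.\ the $\mu$-law of $x\mapsto[v\mapsto\psi(S_vx)]$. The plan is to pin down the $V$-local statistics of $\bY_{\beta,W,i}$ in two moves: first replace each genuine permutation $S_{W,\sigma,\xi}$ by the partial map $S'_{W,\sigma,\xi}$ (legitimate by Lemma \ref{lem:S}), and then recognize the result as a \emph{fixed} pushforward of the $W$-local statistics of $\bX_{\beta,i}$, which converges to that of $\beta\circ\bX$ by hypothesis.

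Fix finite $V\subset\Gamma$ and $\epsilon>0$. First I would choose a finite $G_V\subset G$ with $\mu(\{x:\rho(v,x)\in G_V\ \forall v\in V\})>1-\epsilon$ (possible since each $\rho(v,\cdot)$ is a.e.\ $G$-valued) and henceforth only consider $W\supseteq G_V$. Define $\Phi_{V,W}\colon B^W\to B^V$ by $\Phi_{V,W}(\zeta)(v)=\zeta(\rho_W(v,\zeta))$ when $\rho_W(v,\zeta)\in W$ and $\Phi_{V,W}(\zeta)(v)=b_0$ (a fixed element of $B$) otherwise. Directly from the definition of $S'_{W,\sigma,\xi}$, $\xi(S'_{W,\sigma,\xi}(v)p)=\Phi_{V,W}(\xi[\sigma,W](p))(v)$ whenever $\rho_W(v,\xi[\sigma,W](p))\in W$. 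Now I combine two inputs: Lemma \ref{lem:S0} (second assertion, summed over the finitely many $g'\in G_V$) shows that the $\eta_i\times u_{m_i}$-measure of $\{(\sigma,\xi,p):\rho_W(v,\xi[\sigma,W](p))\in G_V\ \forall v\in V\}$ has $\liminf_{(W,i)\to(G,\infty)}\ge 1-\epsilon$; and Lemma \ref{lem:S}, intersected over the finite set $V$, shows the $\eta_i\times u_{m_i}$-measure of $\{(\sigma,\xi,p):S_{W,\sigma,\xi}(v)p=S'_{W,\sigma,\xi}(v)p\ \forall v\in V\}$ tends to $1$. On the intersection $[v\mapsto\xi(S_{W,\sigma,\xi}(v)p)]=\Phi_{V,W}(\xi[\sigma,W](p))$; and since the $\eta_i\times u_{m_i}$-law of $\xi[\sigma,W](p)$ is exactly the $W$-local statistics of $\bX_{\beta,i}$ (unwinding Definition \ref{defn:eta}; cf.\ Lemma \ref{lem:W}), which converges in total variation to $(\psi^W)_*\mu$ because $\lim_i\bX_{\beta,i}=\beta\circ\bX$, I get
$$\limsup_{i\to\infty}d_V(\bY_{\beta,W,i},\beta\circ\bY)\le\epsilon+\tfrac12\big\|(\Phi_{V,W})_*(\psi^W)_*\mu-(\psi^V_S)_*\mu\big\|_1+o_W(1),$$
where $o_W(1)\to0$ as $W\to G$.

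The middle term is controlled by the cocycle relation $S_vx=T_{\rho(v,x)}x$ together with Lemma \ref{lem:rho}. By the latter, off a $\mu$-set of measure $o_W(1)$ one has $\rho_W(v,\psi^W(x))=\rho(v,x)$ for all $v\in V$; intersecting with $\{\rho(v,x)\in G_V\subseteq W\ \forall v\in V\}$ (measure $>1-\epsilon$) yields $\Phi_{V,W}(\psi^W(x))(v)=\psi^W(x)(\rho(v,x))=\psi(T_{\rho(v,x)}x)=\psi(S_vx)$. Thus $\Phi_{V,W}\circ\psi^W$ agrees with $x\mapsto[v\mapsto\psi(S_vx)]$ off a set of $\mu$-measure $<\epsilon+o_W(1)$, so the middle term is $<\epsilon+o_W(1)$. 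Feeding this back, then letting $W\to G$ and $\epsilon\to0$, proves part (1): since $d_V\ge0$, $V$ is arbitrary, and by the definition of $\lim_{(W,i)\to(G,\infty)}$, this is exactly $\lim_{(W,i)\to(G,\infty)}\bY_{\beta,W,i}=\beta\circ\bY$.

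For part (2), assume $\lim_i\bX_{\beta,i}=\beta\circ\bX$ strongly. Since $\theta_{W,i}$ is the pushforward of $\eta_i$ under $(\sigma,\xi)\mapsto(S_{W,\sigma,\xi},\xi)$, the quantity $\theta_{W,i}(\{(\sigma,\xi):d_V((\sigma,\xi),\psi)<\delta\})$ equals $\eta_i(\{(\sigma,\xi):d_V((S_{W,\sigma,\xi},\xi),\psi)<\delta\})$, so I must show that for $\eta_i$-most $(\sigma,\xi)$ the $u_{m_i}$-law of $p\mapsto[v\mapsto\xi(S_{W,\sigma,\xi}(v)p)]$ is within $\delta$ of $(\psi^V_S)_*\mu$. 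Applying Markov's inequality to the two $\eta_i\times u_{m_i}$-estimates of the second paragraph, for $\eta_i$-most $(\sigma,\xi)$ the fibrewise $u_{m_i}$-probability that $[v\mapsto\xi(S_{W,\sigma,\xi}(v)p)]=\Phi_{V,W}(\xi[\sigma,W](p))$ exceeds $1-O(\sqrt\epsilon)$; strong convergence of $\bX_{\beta,i}$ gives, for $\eta_i$-most $(\sigma,\xi)$, that the $u_{m_i}$-law of $\xi[\sigma,W](p)$ is within $\epsilon$ of $(\psi^W)_*\mu$; and $(\Phi_{V,W})_*(\psi^W)_*\mu$ is within $\epsilon+o_W(1)$ of $(\psi^V_S)_*\mu$ by the third paragraph. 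Chaining these, $d_V((S_{W,\sigma,\xi},\xi),\psi)<C\epsilon+o_W(1)$ for $\eta_i$-most $(\sigma,\xi)$, which is the desired strong convergence. I expect the main obstacle to be precisely this bookkeeping in (2): coordinating the Fubini/Markov passages from the joint measures $\eta_i\times u_{m_i}$ to the fibrewise measures $u_{m_i}$, simultaneously for the event of Lemma \ref{lem:S} and the event of Lemma \ref{lem:S0}, while keeping every error term in the correct $(W,i)\to(G,\infty)$ order of limits; the conceptual content, namely the identity $\Phi_{V,W}\circ\psi^W\approx\psi^V_S$, is an immediate consequence of $S_vx=T_{\rho(v,x)}x$ and Lemma \ref{lem:rho}.
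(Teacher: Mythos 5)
Your proposal is correct and follows essentially the same route as the paper: you build the same comparison map $\Phi_{W,V}(\zeta)(v)=\zeta(\rho_W(v,\zeta))$ (with a default value $b_0$), use Lemma \ref{lem:S} to replace $S_{W,\sigma,\xi}$ by $S'_{W,\sigma,\xi}$, use Lemma \ref{lem:S0}/Lemma \ref{lem:rho} to keep $\rho_W$ inside $W$, identify the $V$-local statistics of $\bY_{\beta,W,i}$ as an approximate $\Phi$-pushforward of the $W$-local statistics of $\bX_{\beta,i}$, and finish with the cocycle identity $S_vx=T_{\rho(v,x)}x$ to see $\Phi_{W,V}\circ\psi^W\to\psi^V_S$ in measure. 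The only difference is cosmetic: your constants ($\epsilon$ versus $|V|\epsilon$) need trivial adjustment, and you carry out the Markov/fibrewise bookkeeping for part (2) explicitly, which the paper leaves as ``similar.''
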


\begin{proof}
The first statement is equivalent to stating that the $V$-local statistics of $\bY_{\beta,W,i}$ converges to the $V$-local statistics of $\beta \circ \bY$ for every finite $V \subset \Gamma$. 

For $W \subset G$ finite define
$$L_{V,W,i}:\sym(m_i)^G \times B^{[m_i]} \times [m_i] \to B^V$$
by
$$L_{V,W,i}(\sigma,\xi,p)(\gamma):= \xi\left(S_{W,\sigma,\xi}(\gamma)p\right)\quad \forall \gamma \in V.$$
By definition, the $V$-local statistics of $\bY_{\beta,W,i}$ is $(L_{V,W,i})_*\eta_i\times u_{m_i}$. 

 Let $\omega_{W,i}$ be the $W$-local statistics of $\bX_{\beta,i}$. To be precise, if 
$$\chi_{W,i}:\sym(m_i)^G \times B^{[m_i]} \times [m_i] \to B^W$$
is defined by
$$\chi_{W,i}(\sigma,\xi,p)(g):= \xi\left(\sigma(g)p\right)\quad \forall g \in W$$
then $\omega_{W,i} = (\chi_{W,i})_*\eta_i\times u_{m_i}$. 

Fix $b_0 \in B$. Define $\Phi_{W,V}:B^W \to B^V$  by 
$$\Phi_{W,V}(\xi)(\gamma) := \xi( \rho_W(\gamma,\xi) )$$
if $\gamma \in V$ and $\rho_W(\gamma,\xi) \in W$. Set $\Phi_{W,V}(\xi)(\gamma) :=b_0$ otherwise. We claim that $(\Phi_{W,V})_*\omega_{W,i}$ is asymptotic to $(L_{V,W,i})_*\eta_i\times u_{m_i}$ in the sense that
\begin{eqnarray}\label{claim10}
\lim_{(W,i) \to (G,\infty)} \| (\Phi_{W,V})_*\omega_{W,i} - (L_{V,W,i})_*\eta_i\times u_{m_i} \|_1 = 0.
\end{eqnarray}
According to Lemma \ref{lem:S} and the definition of $S'_{W,\sigma,\xi}$, for all $\gamma \in V$,
$$\lim_{(W,i) \to (G,\infty)} \eta_i\times u_{m_i}\left(\{(\sigma,\xi,p):~S_{W,\sigma,\xi}(\gamma)p = \sigma(\rho_W(\gamma, \xi[\sigma,W](p)))p \}\right)=1.$$
Because $\lim_{i\to\infty} \bX_{\beta,i} =\beta \circ \bX$, $\lim_{i\to\infty} \omega_{W,i} = \psi^W_*\mu$.  So by Lemmas \ref{lem:W} and \ref{lem:rho},  for all $\gamma \in V$,
$$\lim_{(W,i) \to (G,\infty)} \eta_i\times u_{m_i}\left(\{(\sigma,\xi,p):~\rho_W(\gamma, \xi[\sigma,W](p)) \in W\}\right) =1.$$
Therefore,
$$\lim_{(W,i) \to (G,\infty)} \eta_i\times u_{m_i}\left(\{(\sigma,\xi,p):~L_{V,W,i}(\sigma,\xi,p) = \Phi_{W,V}\chi_{W,i}(\sigma,\xi,p)\}\right) =1.$$
This implies (\ref{claim10}).

By Lemmas  \ref{lem:W} and \ref{lem:composition},
$$\lim_{i \to \infty} \Phi_{W,V} \circ \bX_{\beta,i}^W = (T,X,\mu,\Phi_{W,V}\psi^W).$$
Because $\Phi_{W,V}\psi^W_*\mu$ converges to $\psi^V_*\mu$ as $W \to G$,
$$\lim_{(W,i) \to (G,\infty)} \Phi_{W,V} \circ \bX_{\beta,i}^W = (T,X,\mu,\psi^V).$$
By (\ref{claim10}) this means that the $V$-local statistics of $\bY_{\beta,W,i}$ converge to $(\psi^V)_*\mu$ which is the $V$-local statistics of $\beta\circ \bY$. This proves the first statement of the lemma. The second statement is similar. 
\end{proof}




In the previous lemma we used a sequence $\{\bX_{\beta,i}\}_{i=1}^\infty$ of approximate processes over $G$ converging to $\beta \circ \bX$ to construct a sequence $\{\bY_{\beta, W,i}\}_{i\in \N, W\subset G}$ that converges to $\beta \circ \bY$ (there is a slight abuse of notation here since $W$ varies over all {\em finite} subsets of $G$ instead of all subsets; we will continue this abuse below). In the next lemma, a sequence $\{\bX_i\}_{i=1}^\infty$ of approximate processes over $G$ such that $\beta \circ \bX_i=\bX_{\beta,i}$ is used to construct a new sequence $\{\bY_{W,i}\}_{i\in \N, W\subset G}$ of approximate processes over $\Gamma$ satisfying various properties.

\begin{lem}\label{lem:forward}
Given a sequence $\{\bX_i\}_{i=1}^\infty$ of approximate processes over $G$ such that $\beta \circ \bX_i = \bX_{\beta,i}$ there exists a collection $\{\bY_{W,i}\}_{i\in \N, W\subset G}$ of approximate processes over $\Gamma$ such that 
\begin{enumerate}
\item $\beta \circ \bY_{W,i}  $ is equivalent to $ \bY_{\beta,W,i}$. 
\item  if $\lim_{i\to\infty} \bX_i = \bX$ then $\lim_{(W,i) \to (G,\infty)} \bY_{W,i} = \bY.$
\item  If $\lim_{i\to\infty} \bX_i = \bX$ strongly then $\lim_{(W,i) \to (G,\infty)} \bY_{W,i} = \bY$ strongly.
\item $h(\bY_{W,i}) - h(\beta \circ \bY_{W,i}) \ge h(\bX_i) - h(\beta \circ \bX_i) ~\forall W,i.$
\end{enumerate}
Item (3) means: for all finite $V \subset \Gamma$ and $\epsilon>0$,
$$\lim_{(W,i)\to (G,\infty)} \ttheta_{W,i}\Big(\big\{(\sigma,\xi)\in \sym(m_i)^\Gamma\times A^{[m_i]}:~d_V((\sigma,\xi), \phi)<\epsilon \big\}\Big) =1$$
where $\bY_{W,i}$ is constructed from measures $\ttheta_{W,i}$ on $\sym(m_i)^\Gamma \times A^{[m_i]}$.

\end{lem}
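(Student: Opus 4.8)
The plan is to produce the lifted processes $\bY_{W,i}$ over $\Gamma$ by ``decorating'' the permutations $S_{W,\sigma,\xi}$ (already built in Lemma \ref{lem:S}) with the finer observable $\xi'$ coming from $\bX_i$. Concretely, recall that $\bX_i$ is constructed from a measure $\eta'_i$ on $\sym(m_i)^G \times A^{[m_i]}$ whose composition with $\beta$ gives $\eta_i$ on $\sym(m_i)^G \times B^{[m_i]}$ (the measure from which $\bX_{\beta,i}$ and hence $\bY_{\beta,W,i}$ were built). I would define $\ttheta_{W,i}$ to be the pushforward of $\eta'_i$ under the map $(\sigma,\zeta) \mapsto (S_{W,\sigma,\beta\circ\zeta}, \zeta)$, where $S_{W,\sigma,\xi}$ is the $\Gamma$-permutation-valued function from Lemma \ref{lem:S} (applied with $\xi = \beta\circ\zeta$). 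Let $\bY_{W,i}$ be the approximate process over $\Gamma$ constructed from $\ttheta_{W,i}$. Item (1) is then essentially definitional: applying $\beta$ to the $A$-coordinate of $\ttheta_{W,i}$ recovers exactly the measure $\theta_{W,i}$ of Lemma \ref{lem:lemma1}, so $\beta \circ \bY_{W,i}$ is equivalent to $\bY_{\beta,W,i}$.

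For items (2) and (3) — convergence of $\bY_{W,i}$ to $\bY = (S,X,\mu,\phi)$, ordinary and strong — the argument should mirror the proof of Lemma \ref{lem:lemma1} almost verbatim, but with $\phi$ in place of $\psi = \beta\circ\phi$ and the finer observable $\zeta$ in place of $\xi$. The key point is that the permutations $S_{W,\sigma,\xi}$ depend only on $\xi = \beta\circ\zeta$ (through the cocycle approximants $\rho_W$), so the $V$-local statistics of $\bY_{W,i}$ are still governed by the map $\Phi_{W,V}$ of the previous lemma, now reading off the $A$-valued letters $\zeta(\rho_W(\gamma,\cdot))$ rather than the $B$-valued ones. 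Since $\lim_{i\to\infty}\bX_i = \bX$ means the $W$-local statistics of $\bX_i$ converge to $\phi^W_*\mu$, and since $\rho_W\circ\psi^W \to \rho$ by Lemma \ref{lem:rho}, composing as in \eqref{claim10} gives that the $V$-local statistics of $\bY_{W,i}$ converge (as $(W,i)\to(G,\infty)$) to $(\phi^V)_*\mu$, which is the $V$-local statistics of $\bY$. The strong-convergence statement (3) follows the same way: if $\bX_i \to \bX$ strongly, the concentration of $\eta'_i$ near $\phi$ in the $d_W$-metric is transported, via the asymptotic cocycle identity of Lemma \ref{lem:S0} and the fact that $\rho_W$ approximates $\rho$, to concentration of $\ttheta_{W,i}$ near $\phi$ in the $d_V$-metric over $\Gamma$.

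For item (4), the inequality $h(\bY_{W,i}) - h(\beta\circ\bY_{W,i}) \ge h(\bX_i) - h(\beta\circ\bX_i)$, the point is that the map $(\sigma,\zeta)\mapsto (S_{W,\sigma,\beta\circ\zeta},\zeta)$ is essentially the identity on the $A$-coordinate and a deterministic function of $(\sigma,\beta\circ\zeta)$ on the $\Gamma$-permutation coordinate. Writing $\eta'_i = \int \nu_{i,\sigma}\,d\kappa_i(\sigma)$ and decomposing each $\nu_{i,\sigma}$ over its $\beta$-pushforward, one checks that the conditional (relative) entropy $\frac{1}{m_i}\int H(\nu'_{i,\sigma,\beta\psi})\,d(\cdots)$ computing $h(\bY_{W,i}) - h(\beta\circ\bY_{W,i})$ is, fiberwise, at least the corresponding quantity for $\bX_i$, because the $A$-marginal is untouched and collapsing $\sigma$ to $S_{W,\sigma,\beta\circ\zeta}$ can only merge fibers, which by concavity of $H$ does not decrease the conditional entropy (the argument is formally identical to Case 1 of the earlier lemma on conjugate measures, where a deterministic change of the permutation coordinate is shown to preserve relative entropy). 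I expect item (4) to be the main obstacle: care is needed because $S_{W,\sigma,\xi}$ is not literally a bijective recoding of $\sigma$ — distinct $\sigma$'s can map to the same $S_{W,\sigma,\xi}$ — so one must argue via the concavity/merging estimate rather than an exact equality, and one must verify that this merging respects the conditioning on $\beta$, which it does precisely because $S_{W,\sigma,\xi}$ is a function of $\xi = \beta\circ\zeta$ alone.
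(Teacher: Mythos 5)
Your proposal is correct and follows essentially the same route as the paper: the lifted measure $\ttheta_{W,i}$ is exactly the pushforward of the measure defining $\bX_i$ under $(\sigma,\zeta)\mapsto(S_{W,\sigma,\beta\circ\zeta},\zeta)$, items (1)--(3) are handled by repeating the argument of Lemma \ref{lem:lemma1} with the finer observable, and item (4) is the same fiber-merging/concavity-of-$H$ estimate, using precisely the fact you isolate, namely that $S_{W,\sigma,\xi}$ depends on $\zeta$ only through $\xi=\beta\circ\zeta$ so the collapsing is compatible with the conditioning on $\beta$.
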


\begin{proof}
Let $\teta_i$ be the probability measure on $\sym(m_i)^G \times A^{[m_i]}$ such that $\bX_i$ is constructed from $\teta_i$. Let $\ttheta_{W,i}$ be obtained by pushing $\teta_i$ forward under the map $J=J_{W,i}$ defined by
$$J(\sigma,\xi) =(S_{W,\sigma, \beta \circ \xi}, \xi).$$
Let $\bY_{W,i}$ be the approximate process over $\Gamma$ constructed from $\ttheta_{W,i}$. 

Since $\beta \circ \bX_i = \bX_{\beta,i}$, it follows that $\beta \circ \bY_{W,i}  $ is equivalent to $ \bY_{\beta,W,i}$.  The proofs of statements (2) and (3) are similar to the proofs of (1) and (2) of Lemma \ref{lem:lemma1}. 

To prove the last statement, fix $i$ and $W$. Let $\tbeta:\sym(m_i)^G \times A^{[m_i]} \to\sym(m_i)^G \times B^{[m_i]}$ be the map $\tbeta(\sigma,\xi)=(\sigma,\beta \circ \xi)$. Let $\teta_{i,\tbeta(\sigma,\xi)}$ be the fiber measure of $\teta_i$ over $\tbeta(\sigma,\xi)$. Thus 
$$\teta_i = \int \teta_{i,\tbeta(\sigma,\xi)}~d\tbeta_*\teta_i(\tbeta(\sigma,\xi)),\quad h(\bX_{i}) - h(\beta \circ \bX_{i}) = \frac{1}{m_i}\int H(\teta_{i,\tbeta(\sigma,\xi)})~d\tbeta_*\teta_i(\tbeta(\sigma,\xi)).$$
By abuse of notation, we also let $\tbeta$ denote the same map with $G$ replaced by $\Gamma$. We define the fiber measure $\ttheta_{W,i,\tbeta J(\sigma,\xi)}$ of $\ttheta_{W,i}$ over $\tbeta(J(\sigma,\xi))$ similarly. Thus
$$h(\bY_{W,i}) - h(\beta \circ \bY_{W,i}) = \frac{1}{m_i} \int H(\ttheta_{W,i,\tbeta J(\sigma,\xi)})~d\ttheta_{W,i}(J(\sigma,\xi)) = \frac{1}{m_i}\int H(\ttheta_{W,i,\tbeta J(\sigma,\xi)})~d\teta_i(\sigma,\xi).$$
Let $\alpha:\sym(m_i)^G\times A^{[m_i]}\to A^{[m_i]}$ denote the projection map. By abuse of notation, we also let $\alpha$ denote the projection map from $\sym(m_i)^\Gamma \times A^{[m_i]}$ to $A^{[m_i]}$. Because $\alpha J=\alpha$ and $J_*(\teta_{i,\tbeta J(\sigma,\xi)}) =\ttheta_{W,i,\tbeta J (\sigma,\xi)}$,
$$\alpha_*(\teta_{i,\tbeta J(\sigma,\xi)}) = \alpha_* (\ttheta_{W,i,\tbeta J (\sigma,\xi)}), \quad \forall (\sigma,\xi) \in \sym(m_i)^G\times A^{[m_i]}.$$

Since $\alpha$ is injective on the support of $\ttheta_{W,i,\tbeta J (\sigma,\xi)}$, it follows that
\begin{eqnarray*}
H( \ttheta_{W,i,\tbeta J (\sigma,\xi)} ) &=& H( \alpha_* \ttheta_{W,i,\tbeta J (\sigma,\xi)} ) =  H( \alpha_*\teta_{i,\tbeta J(\sigma,\xi)} ).
\end{eqnarray*}
We claim that
$$\teta_{i,\tbeta J(\sigma,\xi)} = \int \teta_{i,\sigma',\beta \xi'} ~d\teta_{i,\tbeta J(\sigma,\xi)}(\sigma',\xi').$$
To see this, suppose $X,Y,Z$ are any three Borel spaces, $\mu$ is a probability measure on $X$ and $\pi_1:X\to Y$, $\pi_2:Y\to Z$ are Borel maps. Then for $(\pi_2\pi_1)_*\mu$-a.e. $z\in Z$, $\mu_z = \int \mu_{\pi_1(x)} ~d\mu_z(x)$. This follows from the fact that if $\Sigma_Y$, $\Sigma_Z$ are the sigma-algebras on $X$ obtained from pulling back the sigma-algebras on $Y$, $Z$ respectively and $f$ is a bounded function on $X$ then $\bE[f|\Sigma_Z]=\bE[ \bE[f | \Sigma_Y] | \Sigma_Z]$. To see how this applies to the equation above, set $X=\sym(m_i)^G \times A^{[m_i]}$, $Y=\sym(m_i)^G \times B^{[m_i]}$ and $Z = \sym(m_i)^\Gamma \times B^{[m_i]}$. Let $\pi_1 = \tbeta$ and choose $\pi_2$ so that $\pi_2\pi_1 = \tbeta J$. 

Therefore,
$$\alpha_*\teta_{i,\tbeta J(\sigma,\xi)} = \int \alpha_* \teta_{i,\sigma',\beta \xi'} ~d\teta_{i,\tbeta J(\sigma,\xi)}(\sigma',\xi').$$
By concavity of entropy,
$$H( \alpha_* \teta_{i,\tbeta J(\sigma,\xi)} )\ge \int H(\alpha_* \teta_{i,\sigma',\beta \xi'} )~d\teta_{i,\tbeta J(\sigma,\xi)}(\sigma',\xi').$$
Because $\alpha$ is injective on the support of  $\teta_{i,\sigma',\beta \xi'}$, $H(\alpha_* \teta_{i,\sigma',\beta \xi'} ) = H(\teta_{i,\sigma',\beta \xi'} )$. Therefore,
$$H( \ttheta_{W,i,\tbeta J (\sigma,\xi)} )  \ge  \int H(\teta_{i,\sigma',\beta \xi'} )~d\teta_{i,\tbeta J(\sigma,\xi)}(\sigma',\xi').$$
Thus,
\begin{eqnarray*}
h(\bY_{W,i}) - h(\beta \circ \bY_{W,i})  &\ge&\frac{1}{m_i} \iint H(\teta_{i,\sigma',\beta \xi'})~d\teta_{i,\tbeta J(\sigma,\xi)}(\sigma',\xi') ~d\teta_i(\sigma,\xi)\\
&=&  \frac{1}{m_i} \int H(\teta_{i,\sigma,\beta \xi})~d\teta_i(\sigma,\xi)\\
&=& h(\bX_{i}) - h(\beta \circ \bX_{i}).
\end{eqnarray*}

 \end{proof}



We can now prove Proposition \ref{prop:orbit} whose statement is:
\vspace{0.2in}

\noindent {\bf Proposition  \ref{prop:orbit}}. {\it Let $G$ be an amenable group, $\bX=(T,X,\mu,\phi)$ an essentially free $G$-process with finite range $A$ and $S:(X,\mu)\to (X,\mu)$ be an essentially free measure-preserving Borel automorphism with the same orbits as $T$ (i.e., for $\mu$-a.e. $x\in X$, $\{T_gx:~g\in G\} = \{S^nx:~n\in \Z\}$).

Let $\beta:A \to B$ be a map and suppose the orbit change from $T$ to $S$ is measurable with respect to both the $T$-invariant sub-sigma-algebra generated by $\psi:=\beta\circ\phi$ and the $S$-invariant sub-sigma-algebra generated by $\psi$.  Then for any random sofic approximation $\sK$ to $G$,
$$\bh(\sK,\bX|\beta \circ \bX)= h(\sK, \bX| \beta \circ \bX  )  = h(S,\phi | \beta\circ \phi).$$ }

\begin{proof}
Let $\Gamma=\Z$. Let $\{\bX_i\}_{i=1}^\infty$ be a sequence of approximate processes adapted to $\sK'$, a subsequence of $\sK$, so that
\begin{enumerate}
\item $\lim_{i\to\infty} \bX_i = \bX$;
\item $\lim_{i\to\infty} h(\bX_i) - h(\beta \circ \bX_i) = \bh(\sK,\bX|\beta \circ \bX)$.
\end{enumerate}
Let $\bX_{\beta,i} = \beta \circ \bX_i$. Let $\{\bY_{\beta,W,i}\}_{i\in \N, W\subset G}$ be constructed as in the paragraph before Lemma \ref{lem:lemma0.5}. Let $\{\bY_{W,i}\}_{i\in \N, W\subset G}$ be the collection of approximate processes given by Lemma \ref{lem:forward}. 

A diagonalization argument implies that there exist increasing sequences $\{W_i\}_{i=1}^\infty$ and $\{k_i\}_{i=1}^\infty$ so that if $\bY_i:=\bY_{W_i,k_i}$ then $\lim_{i\to\infty} \bY_i = \bY=(S,X,\mu,\phi)$.  Moreover we may assume (using Lemma \ref{lem:lemma0.5}) that if $\sK^\rho:=\{\kappa^\rho_{W_i,k_i}\}_{i=1}^\infty$ then $\sK^\rho$ is a random sofic approximation to $\Z$. So Lemma \ref{lem:forward} implies
\begin{eqnarray*}
\bh(\sK,\bX|\beta \circ \bX) &=& \lim_{i\to\infty} h(\bX_i) - h(\beta \circ \bX_i)\le \limsup_{i\to\infty} h(\bY_{i}) - h(\beta \circ \bY_{i})\\
&\le& \bh(\sK^\rho, \bY|\beta \circ \bY) =  h(\bY| \beta \circ \bY).
\end{eqnarray*}
The last equality follows from Proposition \ref{prop:Zrelative}. Since $\bh(\sK,\bX|\beta \circ \bX) \ge h(\sK,\bX|\beta \circ \bX)$ it now suffices to prove  $h(\sK,\bX|\beta \circ \bX) \ge h(\bY| \beta \circ \bY)$. 

Let integers $m_i$ be given so that $\sK'=\{\kappa'_i\}_{i=1}^\infty$ where each $\kappa'_i$ is a probability measure on $\sym(m_i)^G$. Let $\sL=\{\lambda_i\}_{i=1}^\infty$ be a random sofic approximation to $\Z$ where each $\lambda_i$ is a probability measure on $\sym(m_i)^\Z$. By Proposition \ref{prop:Zrelative}, there exists a sequence $\{\bY'_{i}\}_{i \in \N}$ of approximate processes over $\Z$ adapted to $\sL'$, a subsequence of $\sL$ such that 
\begin{enumerate}
\item $\lim_{i \to \infty} \bY'_{i} = \bY$ strongly;
\item $\lim_{i\to \infty} h(\bY'_{i}) - h(\beta \circ \bY'_{i}) = h(\sL,\bY|\beta \circ \bY)= h(\bY|\beta \circ \bY)$.
\end{enumerate}
Let $\{\bX_{\beta,V,i}\}_{i\in \N, V\subset \Z}$ be constructed as in the paragraph before Lemma \ref{lem:lemma0.5} with the roles of $X$ and $Y$ swapped. This construction is possible because the orbit change is, by hypothesis, measurable with respect to the smallest sigma-algebra generated by $\psi$ and the $\Z$-action. Let $\{\bX_{V,i}\}_{i\in \N, V\subset \Z}$ be the collection of approximate processes given by Lemma \ref{lem:forward}. According to that lemma,
\begin{enumerate}
\item  $\lim_{(V,i) \to (\Z,\infty)} \bX'_{V,i}= \bX$ strongly.
\item $h(\bX'_{V,i}) - h(\beta \circ \bX'_{V,i}) \ge h(\bY'_{i}) - h(\beta \circ \bY'_{i}) ~\forall V,i.$
\end{enumerate}
So
\begin{eqnarray*}
h(\bY | \beta \circ \bY) &=& \lim_{i\to\infty} h(\bY'_{i}) - h(\beta \circ \bY'_{i})\le  \liminf_{(V,i) \to (\Z,\infty)} h(\bX'_{V,i}) - h(\beta \circ \bX'_{V,i}).
\end{eqnarray*}
A diagonalization argument and Lemma \ref{lem:lemma0.5} imply that there exist increasing sequences $\{V_i\}_{i=1}^\infty$ and $\{k_i\}_{i=1}^\infty$ so that if $\bX'_i:=\bX'_{V_i,k_i}$ then $\{\bX'_i\}_{i=1}^\infty$ is adapted to a random sofic approximation $\sK''$ of $G$. So the inequalities above imply
$$h(\bY | \beta \circ \bY) \le \limsup_{i\to\infty} h(\bX'_i) - h(\beta \circ \bX'_{i}) \le h(\sK'',\bX|\beta \circ \bX).$$
Observe that $\sK''=\{\kappa''_i\}_{i=1}^\infty$ where each $\kappa''_i$ is a probability measure on $\sym(n_i)^G$ and $\{n_i\}_{i=1}^\infty$ is a subsequence of $\{m_i\}_{i=1}^\infty$. It follows from Theorem \ref{thm:Zasymptotic} that $\sK''$ is asymptotic to a subsequence of $\sK'$. It follows from Theorem  \ref{thm:asymptotic2r} that
$$h(\sK'',\bX|\beta \circ \bX) \le h(\sK',\bX|\beta \circ \bX) \le h(\sK,\bX|\beta \circ \bX).$$
So $h(\sK,\bX|\beta \circ \bX) \ge h(\bY| \beta \circ \bY)$ as required.

\end{proof}






\end{document}